\documentclass[11pt]{article}
\pdfoutput=1

\def\pb{\pagebreak}

\usepackage[dvipsnames]{xcolor}

\def\beq{\begin{equation} }\def\eeq{\end{equation} }\def\1{\mathbf{1}}

\usepackage[framemethod=default]{mdframed}
\usepackage{caption}

\usepackage{indentfirst}
\usepackage{bm, mathrsfs, graphics,float,amssymb,amsmath,subeqnarray,setspace,graphicx,amsthm,epstopdf,subfigure, enumerate, color}
\usepackage[utf8]{inputenc}
\usepackage[colorlinks,
            linkcolor=red,
            anchorcolor=blue,
            citecolor=blue
            ]{hyperref}
\usepackage{natbib}

\newcommand{\papertitle}{High-Probability Last-Iterate Guarantees for Two-Point Gaussian Zeroth-Order Stochastic Gradient Descent}
\hypersetup{
	pdftitle={High-Probability Last-Iterate Guarantees for Two-Point Gaussian Zeroth-Order Stochastic Gradient Descent},
	pdfauthor={Haishan Ye},
	pdfkeywords={zeroth-order stochastic optimization, derivative-free optimization, stochastic approximation, high-probability analysis, sub-Gaussian noise, strong convexity}
}

\usepackage{fullpage}
\numberwithin{equation}{section}

\newtheorem{lemma}{Lemma}
\newtheorem{theorem}{Theorem}
\newtheorem{proposition}{Proposition}

\newtheorem{corollary}[theorem]{Corollary}
\newtheorem{remark}{Remark}

\ifx\assumption\undefined
\newtheorem{assumption}{Assumption}
\fi

\newcommand{\cO}{\mathcal{O}}

\newcommand{\cN}{\mathcal{N}}
\newcommand{\cE}{\mathcal{E}}
\newcommand{\EE}{\mathbb{E}}
\newcommand{\RR}{\mathbb{R}}

\newcommand{\bg}{\bm{g}}
\newcommand{\ba}{\bm{a}}
\newcommand{\be}{\bm{e}}
\newcommand{\bx}{\bm{x}}
\newcommand{\by}{\bm{y}}

\newcommand{\bu}{\bm{u}}

\newcommand{\cF}{{{\mathcal{F}}}}

\newcommand{\PP}{\mathbb{P}}

\usepackage{bbm}

\newcommand{\cC}{\mathcal{C}}

\usepackage{multirow}
\usepackage{tablefootnote}
\usepackage{colortbl}
\usepackage{hhline}

\usepackage{algorithm}
\usepackage{algorithmic}

\begin{document}
\title{\papertitle}

\author{
Haishan Ye
\thanks{
	Xi'an Jiaotong University;
	email: hsye\_cs@outlook.com
}
}
\date{\today}

\maketitle

\def\TH{\tilde{H}}
\newcommand{\ti}[1]{\tilde{#1}}
\def\diag{\mathrm{diag}}
\newcommand{\norm}[1]{\left\|#1\right\|}
\newcommand{\dotprod}[1]{\left\langle #1\right\rangle}
\def\tr{\mathrm{tr}}

\begin{abstract}
We establish a direct high-probability last-iterate guarantee for the standard
same-sample two-point Gaussian zeroth-order stochastic-gradient method applied
to smooth, strongly convex stochastic optimization. At each iteration, the
method draws a fresh Gaussian direction, evaluates the objective at two
symmetric perturbations using the same stochastic sample, and takes a
norm-normalized stochastic-approximation step. Assuming unbiased stochastic
gradients and a conditional exponential-moment bound on the squared norm of the
stochastic-gradient noise, we prove that, whenever
\(d\ge16\log(6T/\delta)\),
\[
f(\bx_T)-f(\bx^*)
=
\widetilde{\mathcal O}\!\left(\frac{d}{T}\right)
\]
with probability at least \(1-\delta\), up to fixed problem parameters and
logarithmic factors. The confidence dependence is therefore logarithmic rather
than polynomial in \(1/\delta\). The analysis is direct: it neither invokes
Markov's inequality to convert an expectation bound nor truncates the noise. We
are not aware of a prior direct high-probability last-iterate result at this
zeroth-order scale for the same-sample Gaussian recursion under conditional
sub-Gaussian stochastic-gradient noise. The proof combines a uniform weighted
scan for Gaussian angles with an angle-enlarged product-martingale boundary that
controls the signed suffix-product term arising from the unrolled stochastic
recursion.
\end{abstract}

\noindent\textbf{Keywords:} zeroth-order stochastic optimization;
derivative-free optimization; stochastic approximation; high-probability
analysis; sub-Gaussian noise; strong convexity.

\section{Introduction}

We study stochastic optimization problems for which gradients are unavailable,
unreliable, or prohibitively expensive. This regime arises in simulation
optimization \citep{nemirovski2009robust,ghadimi2013stochastic}, black-box and
derivative-free optimization \citep{conn2009introduction,nesterov2017random},
bandit learning \citep{flaxman2005online,shamir2017optimal}, stochastic
approximation with oracle feedback \citep{robbins1951stochastic}, and
high-dimensional model adaptation \citep{malladi2023fine}. These applications
call for finite-sample guarantees that make both the dimension and the
confidence level explicit. We consider
\begin{equation}\label{eq:f_sto}
	\min_{\bx\in\mathbb{R}^d} f(\bx)
	=
	\EE_\xi[f(\bx;\xi)]
\end{equation}
using only stochastic function values. A query at \(\bx\) returns
\(f(\bx;\xi)\) for a random sample \(\xi\). Within each two-point finite
difference, both evaluations use the same sample. The resulting randomized
finite-difference estimator can be inserted into a gradient-descent-type
recursion \citep{conn2009introduction,nesterov2017random,duchi2015optimal}. In
many simulation and black-box applications, however, the method is run once and
its terminal decision is deployed. A high-probability guarantee for that last
iterate is therefore more informative than an expected guarantee interpreted
through hypothetical independent repetitions.

Classical analyses of stochastic zeroth-order methods primarily control expected
suboptimality. For smooth, strongly convex objectives, they yield the familiar
\(\mathcal O(d/T)\) rate, up to problem-dependent constants
\citep{ghadimi2013stochastic,nesterov2017random,malladi2023fine,wang2023fine}.
Expectation alone does not certify the performance of a single run. Indeed, if
\(\EE[f(\bx_T)-f(\bx^*)]\le r(T)\), then Markov's inequality gives
\(\PP\{f(\bx_T)-f(\bx^*)>\varepsilon\}\le\delta\) only when
\(r(T)\le\delta\varepsilon\). Consequently, an \(\mathcal O(d/T)\)
expected rate yields merely an \(\mathcal O(d/(\delta T))\) confidence bound
through this generic conversion. A direct concentration analysis should instead
incur only logarithmic dependence on \(1/\delta\), as sought in robust
stochastic approximation and large-deviation theory
\citep{nemirovski2009robust,ghadimi2013stochastic,gorbunov2021high}. In the
zeroth-order setting, a Markov conversion may additionally force the smoothing
radius to depend on the confidence level, obscuring the behavior of the
standard algorithmic parameterization \citep{ye2026smoothhpzo}.

A direct high-probability analysis faces a geometric obstacle absent from the
usual expectation argument. Even in the noiseless case, the descent term
contains the random alignment \((\bu_t^\top\nabla f(\bx_t))^2\). Its
conditional mean is proportional to \(\|\nabla f(\bx_t)\|^2\), which is
sufficient after taking expectations. Pathwise, however, the normalized
alignment has a one-dimensional \(\chi^2\)-type lower tail and cannot be
bounded away from zero at every iteration with high probability. Thus no
uniform deterministic contraction is available. The analysis must instead
accumulate contraction across many directions and control the accumulated
alignment simultaneously over the relevant suffixes of the trajectory.

The stochastic oracle creates a second obstacle. The finite-difference
estimator contains both smoothing bias and stochastic error. Once the descent
recursion is unrolled, the linear error generated at time \(k\) is multiplied
by future random contractions determined by \(\{\bu_t\}_{t>k}\), producing a
signed adaptive suffix-product martingale. Replacing this product by a
deterministic envelope, or taking absolute values before concentration, loses
the cancellation needed for a sharp rate. At the same time, its variance proxy
couples weighted Gaussian projections with quadratic noise and smoothing terms.
These components must be controlled on a common event and uniformly over the
terminal time.

\paragraph{Contributions.}
Our contributions are threefold.
\begin{enumerate}[(i)]
	\item We derive a direct high-probability bound for the last iterate of the
	standard same-sample two-point Gaussian zeroth-order SGD recursion with the
	norm-normalized stepsize
	\(\eta_t=4d/[\mu(t+T_0)\|\bu_t\|^2]\). The argument neither truncates
	nor clips the noise and does not rely on online-to-batch or Markov-type
	conversions. We are not aware of an earlier direct result at the
	\(\widetilde{\mathcal O}(d/T)\) scale for this recursion under conditional
	sub-Gaussian stochastic-gradient noise.
	\item The guarantee is dimension-explicit. Under
	\(d\ge16\log(6T/\delta)\), Corollary~\ref{cor:main_1} yields
	\[
		f(\bx_T)-f(\bx^*)
		=
		\widetilde{\mathcal O}\!\left(\frac{d}{T}\right)
	\]
	with probability at least \(1-\delta\), after suppressing fixed problem
	parameters and logarithmic factors. The dimension condition simultaneously
	controls the Gaussian direction norms and ensures a polynomial envelope for
	the random product weights.
	\item We develop a time-uniform proof architecture for the unrolled
	zeroth-order recursion. A weighted scan controls accumulated Gaussian
	alignment over all relevant suffixes, replacing an unavailable per-step
	contraction by an aggregate one. A stitched, angle-enlarged
	product-martingale boundary then controls the signed linear term while
	retaining the realized product weights until cancellation has been used.
	Terminal concentration estimates close the associated variance, quadratic
	noise, and smoothing-bias bounds.
\end{enumerate}

The remainder of the paper is organized as follows. We first review related
work, then introduce the algorithm, assumptions, and main results.
Section~\ref{sec:zsgd} develops the descent recursion, the time-uniform
concentration events, and the induction proving the main theorem. The appendix
collects the auxiliary concentration inequalities and detailed proofs.

\subsection{Related Work}

\paragraph{Derivative-free and bandit convex optimization.}
Zeroth-order and derivative-free methods are classical tools in continuous
optimization \citep{conn2009introduction}. Gaussian- and sphere-smoothed
randomized methods were analyzed by \citet{nesterov2017random}, and
\citet{duchi2015optimal} established minimax rates for zeroth-order convex
optimization. In online convex optimization, \citet{flaxman2005online}
introduced one-point bandit estimators, with subsequent work obtaining sharper
rates from two-point feedback \citep{agarwal2010optimal,shamir2017optimal}.
Recently, \citet{yu2026improved} showed that OCO with two-point feedback can achieve the optimal regret  holding in expectation, which is optimal in both the time zone and the dimension for the strongly convex function.
High-probability two-point bandit results, including \citet{ye2026optimal},
address related concentration questions but operate under different oracle
models, output criteria, and boundedness assumptions. In particular, an online
regret bound does not directly imply a last-iterate confidence guarantee for the
stochastic-approximation recursion studied here.

\paragraph{High-dimensional motivation.}
High-dimensional model adaptation provides a contemporary motivation for
zeroth-order methods. LoRA and QLoRA reduce memory through low-rank adapters and
quantization \citep{hu2022lora,dettmers2023qlora}, whereas forward-only methods
such as MeZO avoid storing backpropagation gradients by using zeroth-order
updates \citep{malladi2023fine}. Our analysis does not model transformer
training or nonconvex deep networks. These examples instead motivate a theory
in which dimension is explicit and confidence guarantees avoid polynomial
losses in \(1/\delta\).

\paragraph{High-probability stochastic approximation.}
Stochastic approximation originates with \citet{robbins1951stochastic}, and
confidence guarantees beyond expectation have been studied through robust and
large-deviation analyses
\citep{nemirovski2009robust,ghadimi2013stochastic,gorbunov2021high}. For
first-order SGD, high-probability bounds are well understood under bounded
stochastic gradients or bounded noise \citep{rakhlin2012making}.
\citet{liu2023revisiting} removed an almost-sure bounded-gradient condition and
proved high-probability last-iterate guarantees under sub-Gaussian noise,
obtaining an \(\cO(\log T/T)\) rate without prior knowledge of \(T\). Their
analysis is a natural first-order benchmark. A zeroth-order proof must
additionally handle random search directions, smoothing bias, and stochastic
finite-difference errors.

\paragraph{Stochastic zeroth-order methods.}
Most stochastic zeroth-order analyses establish guarantees in expectation
\citep{ghadimi2013stochastic,duchi2015optimal,shamir2017optimal}. For smooth,
strongly convex objectives, randomized methods attain
\(\mathcal O(d/T)\) expected rates under bounded-variance-type conditions
\citep{wang2023fine,malladi2023fine}. High-probability guarantees are also
available for robustified algorithms. In a nonsmooth heavy-tailed setting,
\citet{kornilov2023acceleratedzo} derive iteration and oracle-complexity bounds
for accelerated clipped schemes. Those results modify the estimator to obtain
robustness; by contrast, we analyze the unmodified same-sample two-point
Gaussian recursion and target its last iterate under conditional sub-Gaussian
stochastic-gradient noise.

Table~\ref{tab:rates_sto} gives a schematic rate comparison. Because the oracle
models and output criteria differ across rows, the table should be read only at
the level of rate order. For orientation, first-order results are displayed on
the customary zeroth-order scale by including a dimension factor; the bound in
this paper is proved directly for the two-point Gaussian method.

\begin{table}[tb]
	\centering
	\small
	\resizebox{\textwidth}{!}{%
	\begin{tabular}{lllc}
		\hline
		\textbf{Reference} & \textbf{Guarantee} & \textbf{Noise assumption} & \textbf{Rate after \(T\) iterations} \\ \hline
		\citet{bottou2018optimization} & Expectation & Bounded variance & $\mathcal{O}\!\left(\frac{d}{T}\right)$ \\ \hline
		\citet{rakhlin2012making} & High probability & Bounded gradient & $\widetilde{\mathcal{O}}\!\left(\frac{d}{T}\right)$ \\ \hline
		\citet{liu2023revisiting} & High probability & Sub-Gaussian noise & $\widetilde{\mathcal{O}}\!\left(\frac{d}{T}\right)$ \\ \hline
		\citet{ye2026optimal} & High probability & Bounded gradient & $\mathcal{O}\!\left(\frac{d(\log T+\log(1/\delta))}{T}\right)$ \\ \hline
		\citet{wang2023fine} & Expectation & Bounded variance & $\mathcal{O}\!\left(\frac{d}{T}\right)$ \\ \hline
		\textbf{This paper} & \textbf{High probability} & \textbf{Conditional sub-Gaussian noise} & $\mathcal{O}\!\left(\frac{d\log(T+T_0)\Gamma_T(\delta)}{T+T_0}\right)$ \\ \hline
	\end{tabular}
	}
	\caption{Schematic rate comparison for smooth, strongly convex
	stochastic optimization. The oracle models, assumptions, and output criteria
	are not identical across rows. First-order results are displayed on a
	zeroth-order scale by including the conventional dimension factor. The entry
	for this paper suppresses fixed problem parameters and the additional factor
	\(1+\Gamma_T(\delta)/T_0\) in Corollary~\ref{cor:main_1}.}
	\label{tab:rates_sto}
\end{table}

Table~\ref{tab:direct_zo_comparison} isolates the closest zeroth-order
comparisons. The salient distinctions concern the oracle model, the output
criterion, whether the guarantee is in expectation or with high probability,
and whether robustness is obtained from boundedness, clipping, or a
sub-Gaussian noise condition.

\begin{table}[tb]
	\centering
	\small
	\resizebox{\textwidth}{!}{%
	\begin{tabular}{lllll}
		\hline
		\textbf{Reference} & \textbf{Oracle model} & \textbf{Output/criterion} & \textbf{Guarantee} & \textbf{Noise or boundedness} \\ \hline
		\citet{ghadimi2013stochastic} & Stochastic zeroth-order & Expected stationarity/suboptimality & Expectation & Bounded variance \\ \hline
		\citet{duchi2015optimal}, \citet{shamir2017optimal} & Two-point zeroth-order/bandit & Minimax or averaged guarantees & Expectation/regret & Model-dependent boundedness \\ \hline
		\citet{wang2023fine} & Gaussian zeroth-order & Strongly convex suboptimality & Expectation & Bounded variance \\ \hline
		\citet{kornilov2023acceleratedzo} & Clipped zeroth-order method & Convex oracle complexity & High probability & Heavy-tailed noise with clipping \\ \hline
		\citet{ye2026optimal} & Two-point bandit feedback & Online regret & High probability & Bounded gradient \\ \hline
		\textbf{This paper} & \textbf{Two-point Gaussian oracle} & \textbf{Last iterate} & \textbf{High probability} & \textbf{Sub-Gaussian noise} \\ \hline
	\end{tabular}
	}
	\caption{Direct comparison with representative zeroth-order results.
	The present theorem concerns the last iterate of the standard same-sample
	two-point Gaussian stochastic-approximation recursion; it is therefore not
	directly interchangeable with expectation bounds, minimax risk guarantees,
	or online regret results.}
	\label{tab:direct_zo_comparison}
\end{table}

\section{Preliminaries and Assumptions}

\subsection{Preliminaries}
We begin with the two-point estimator and the resulting stochastic recursion.
At iteration \(t\), the method draws a fresh Gaussian direction and a fresh
oracle sample; both function evaluations in the finite difference use the same
sample \(\xi_t\).

Given a point \(\bx\), a direction \(\bu\), and a common sample \(\xi\), the
oracle is queried at \(\bx+\alpha\bu\) and \(\bx-\alpha\bu\), and the
estimator is defined by
\begin{equation}\label{eq:sg}
	\bg(\bx;\xi) = \frac{f(\bx+\alpha\bu;\xi) - f(\bx-\alpha\bu;\xi)}{2\alpha}\bu.
\end{equation}

Algorithm~\ref{alg:SA_1} applies this estimator recursively.

\begin{algorithm}[t]
	\caption{Same-Sample Two-Point Gaussian Zeroth-Order SGD}
	\label{alg:SA_1}
	\begin{small}
		\begin{algorithmic}[1]
			\STATE {\bf Input:}
			Initial point \(\bx_0\) and horizon \(T\).
			\STATE Set \(T_0\) and the smoothing parameter \(\alpha\) as follows:
			\[
				T_0=\frac{32dL}{\mu},
				\qquad
				\alpha=\frac{1}{\sqrt{d(T+T_0)}}.
			\]
			\FOR {$t=0,1,2,\dots, T-1$ }
			\STATE Draw $\bu_t \sim \cN(0, \bm{I}_d)$ and, independently, a fresh oracle sample \(\xi_t\).
			\STATE Set the stepsize
			\[
				\eta_t=\frac{4d}{\mu(t+T_0)\norm{\bu_t}^2}.
			\]
			\STATE Evaluate \(f(\bx_t +\alpha \bu_t;\xi_t)\) and \(f(\bx_t -\alpha \bu_t;\xi_t)\), and set
			\begin{equation*}
				\bg(\bx_t;\xi_t) = \frac{f(\bx_t+\alpha\bu_t;\xi_t) - f(\bx_t-\alpha\bu_t;\xi_t)}{2\alpha} \bu_t.
			\end{equation*}
			\STATE Update
			\begin{equation}\label{eq:update_1}
				\bx_{t+1} = \bx_t - \eta_t\cdot \bg(\bx_t;\xi_t).
			\end{equation}
			\ENDFOR
			\STATE {\bf Output:} $\bx_T$.
		\end{algorithmic}
	\end{small}
\end{algorithm}

The stepsize is chosen after sampling \(\bu_t\) and before applying the
update. Here \(L\) and \(\mu\) are the smoothness and strong-convexity
parameters in Assumption~\ref{ass:Lmu}. This rule is the norm-normalized
analogue of the classical first-order schedule
\(\Theta(1/(\mu(t+T_0)))\) \citep{liu2023revisiting}. The normalization makes
\(\eta_t\|\bu_t\|^2=4d/[\mu(t+T_0)]\) deterministic in the descent recursion,
whereas the factor \(d\) compensates for the fact that one isotropic direction
captures only a \(1/d\) fraction of the squared gradient norm in expectation.
Because \(\|\bu_t\|^2=\cO(d)\) with high probability, the realized stepsize
has the usual stochastic-approximation order. On the null event
\(\|\bu_t\|=0\), we set \(\eta_t=0\) and leave the iterate unchanged; this
convention has no effect under a Gaussian direction law.

\begin{proposition}[Almost-sure nonvanishing gradients]
	\label{prop:nonvanishing_gradients}
	Suppose that Assumption~\ref{ass:Lmu} holds, \(d\ge2\), and
	\(\bx_0\ne\bx^*\). For every fixed finite horizon \(T\), the iterates
	generated by Algorithm~\ref{alg:SA_1} satisfy
	\[
	\mathbb P\left(
	\nabla f(\bx_t)\ne0\ \text{for all }t=0,\dots,T
	\right)
	=1.
	\]
\end{proposition}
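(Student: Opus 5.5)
We argue by induction on \(t\), using strong convexity to convert the claim into a statement about the iterates: by Assumption~\ref{ass:Lmu}, \(f\) is strongly convex, so \(\nabla f(\bx)=0\) iff \(\bx=\bx^*\). It therefore suffices to show \(\PP(\bx_{t+1}=\bx^*,\ \bx_t\ne\bx^*)=0\) for each \(t<T\); since \(\bx_0\ne\bx^*\) and \(T\) is finite, a union bound over \(t=0,\dots,T-1\) then gives the result.

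Fix \(t\) and condition on \(\cF_t\), the \(\sigma\)-field generated by \(\bu_0,\xi_0,\dots,\bu_{t-1},\xi_{t-1}\). Then \(\bx_t\) is determined, and \(\bu_t\sim\cN(0,\bm I_d)\) is independent of both \(\cF_t\) and \(\xi_t\). On \(\{\bx_t\ne\bx^*\}\), write \(\bm v=\bx^*-\bx_t\ne0\). The update \eqref{eq:update_1} gives \(\bx_{t+1}-\bx_t=-\eta_t c_t\bu_t\), where \(c_t\) is the scalar finite difference. Every nonzero step is therefore parallel to \(\bu_t\), so \(\bx_{t+1}=\bx^*\) forces \(\bu_t\) to be collinear with \(\bm v\), i.e. \(\bu_t\in\mathrm{span}(\bm v)\). (If \(\eta_t c_t=0\), then \(\bx_{t+1}=\bx_t\ne\bx^*\) and there is nothing to prove.) The event \(\{\bu_t\in\mathrm{span}(\bm v)\}\) does not involve \(\xi_t\), and \(\mathrm{span}(\bm v)\) is a line. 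Since \(d\ge2\), this line has Lebesgue measure zero in \(\RR^d\), so its Gaussian probability is \(0\). Hence
\[
\PP(\bx_{t+1}=\bx^*\mid\cF_t)\,\mathbbm{1}\{\bx_t\ne\bx^*\}\le \PP(\bu_t\in\mathrm{span}(\bm v)\mid\cF_t)=0,
\]
and taking expectations yields the per-step claim.

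The main technical point is measurability, so that conditioning is legitimate. We treat \(\bx_t\) as a measurable function of past draws, which requires \((\bx,\xi)\mapsto f(\bx;\xi)\) to be jointly measurable; this is standard and implicitly assumed by the oracle model. We then apply Fubini to the product measure of \((\bu_t,\xi_t)\) and the past, bounding the \(\bu_t\)-section of the bad set by the null line for each fixed past and each \(\xi_t\). The stepsize convention \(\eta_t=0\) on \(\{\|\bu_t\|=0\}\) is a null event and does not affect the argument. The dimension condition \(d\ge2\) is essential: when \(d=1\), collinearity is automatic and the argument fails.
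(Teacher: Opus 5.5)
Your proof is correct and follows the paper's argument: strong convexity reduces the claim to \(\bx_t\ne\bx^*\), the update displacement is a scalar multiple of \(\bu_t\), so reaching \(\bx^*\) forces \(\bu_t\) onto the line \(\operatorname{span}\{\bx_t-\bx^*\}\), and this line has Gaussian measure zero when \(d\ge2\). Your first-hitting-time union bound and the measurability/Fubini remarks make the paper's ``finite induction'' step more explicit but do not change the route.
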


\begin{proof}
	Because \(f\) is differentiable and strongly convex, \(\bx^*\) is the
	unique stationary point of \(f\). It is therefore enough to prove that
	\(\bx_t\ne\bx^*\) for all \(t\le T\) almost surely.
	
	The claim holds at \(t=0\) by assumption. Conditional on the history
	immediately before sampling \(\bu_t\), suppose that
	\(\bx_t\ne\bx^*\). The update displacement in
	Algorithm~\ref{alg:SA_1} is a scalar multiple of the Gaussian direction
	\(\bu_t\). Hence the identity \(\bx_{t+1}=\bx^*\) would require
	\(\bu_t\in\operatorname{span}\{\bx_t-\bx^*\}\). This is a one-dimensional
	subspace, and the conditional Gaussian law of \(\bu_t\) assigns it
	probability zero when \(d\ge2\). A finite induction over
	\(t=0,\dots,T-1\) proves the result.
\end{proof}

By Proposition~\ref{prop:nonvanishing_gradients}, the normalized gradients are
well defined on a probability-one event. On this event, define the angle ratio
\[
\zeta_t
:=
\frac{(\bu_t^\top\nabla f(\bx_t))^2}
{\|\bu_t\|^2\|\nabla f(\bx_t)\|^2}.
\]
This definition does not stop or modify the algorithm. The normalized vector
\(\nabla f(\bx_t)/\|\nabla f(\bx_t)\|\) is measurable before \(\bu_t\) is
sampled. Rotational invariance therefore gives
\(\zeta_t\mid\mathcal F_t^{-}\sim\mathrm{Beta}(1/2,(d-1)/2)\). This
conditional beta law is the sole distributional input to the angle
concentration arguments.
\subsection{Assumptions}

We impose standard smoothness and strong-convexity assumptions.
\begin{assumption}
	\label{ass:Lmu}
	The function \(f\) is \(L\)-smooth and \(\mu\)-strongly convex. That is,
	for every \(\bx,\by\in\RR^d\),
	\begin{align}
		f(\by) \leq f(\bx) + \dotprod{\nabla f(\bx), \by - \bx} + \frac{L}{2}\norm{\by - \bx}^2, \label{eq:L}\\
		f(\by) \geq f(\bx) + \dotprod{\nabla f(\bx), \by - \bx} + \frac{\mu}{2}\norm{\by - \bx}^2. \label{eq:mu}
	\end{align}
\end{assumption}

Throughout, we take \(L\ge\mu\) without loss of generality: replacing \(L\)
by \(L\vee\mu\) preserves both deterministic and stochastic smoothness.

For the stochastic representation in Eq.~\eqref{eq:f_sto}, we impose the
following oracle assumptions.
\begin{assumption}
	\label{ass:est}
	For almost every \(\xi\), the stochastic objective \(f(\cdot;\xi)\) is
	\(L\)-smooth. We also assume that the stochastic gradient
	\(\nabla f(\bx;\xi)\) is an unbiased estimator of \(\nabla f(\bx)\):
	\[
	\EE_\xi[\nabla f(\bx;\xi)] = \nabla f(\bx),
	\qquad \bx\in\RR^d.
	\]
\end{assumption}

\begin{assumption}[Conditional sub-Gaussian stochastic-gradient noise]
	\label{ass:SubG}
	\label{ass:Var}
	Let
	\[
	\be(\bx;\xi):=\nabla f(\bx)-\nabla f(\bx;\xi).
	\]
	Following Assumption~5B of \citet{liu2023revisiting}, suppose that the
	squared stochastic-gradient error norm satisfies a conditional
	exponential-moment bound: there exists \(\sigma>0\) such that, for every
	\(\bx\) and every \(\lambda\in(0,\sigma^{-2})\),
	\begin{equation}
		\label{eq:subg_noise_assumption}
		\EE_\xi\left[
		\exp\left(\lambda\|\be(\bx;\xi)\|^2\right)
		\right]
		\le
		\exp(\lambda\sigma^2).
	\end{equation}
\end{assumption}

\begin{proposition}[Conditional form along Algorithm~\ref{alg:SA_1}]
\label{prop:algorithm_conditional_subg}
	Suppose that Assumptions~\ref{ass:est} and~\ref{ass:SubG} hold, and
	consider Algorithm~\ref{alg:SA_1}. All independence statements below follow
	from the algorithm's sampling rule; no additional oracle assumption is
	imposed. Conditional on the past, \(\bu_t\) is independent of \(\xi_t\),
	and all future Gaussian directions and oracle samples are fresh relative to
	the current history. Consequently, the exponential-moment bound in
	Eq.~\eqref{eq:subg_noise_assumption} holds conditionally along the
	iterates.
	Let
	\(\mathcal F_t^{-}\) denote the history immediately before sampling
	\(\bu_t\), and let
	\(\mathcal F_t^{u}:=\mathcal F_t^{-}\vee\sigma(\bu_t)\) denote the history
	after sampling \(\bu_t\) but before sampling \(\xi_t\). Let
	\(\mathcal F_t^{+}\) denote the history after sampling \(\xi_t\) and
	completing the update to \(\bx_{t+1}\). Whenever the indices are in range,
	these histories satisfy
	\[
	\mathcal F_t^{-}
	\subseteq
	\mathcal F_t^{u}
	\subseteq
	\mathcal F_t^{+}
	\subseteq
	\mathcal F_{t+1}^{-}
	\subseteq
	\mathcal F_{t+1}^{u}.
	\]
	If
	\(\be_t:=\nabla f(\bx_t)-\nabla f(\bx_t;\xi_t)\), then, for either choice
	\(\mathcal G_t\in\{\mathcal F_t^{-},\mathcal F_t^{u}\}\),
	\[
		\EE[\be_t\mid\mathcal G_t]=0,
		\qquad
		\EE\left[
		\exp\left(\lambda\|\be_t\|^2\right)
		\middle|\mathcal G_t
		\right]
		\le
		\exp(\lambda\sigma^2),
		\qquad 0<\lambda<\sigma^{-2}.
	\]
	The pre-direction form is used when the Gaussian direction is integrated
	out in the quadratic-noise estimate, whereas the post-direction form is used
	to establish conditional sub-Gaussian projection bounds.
\end{proposition}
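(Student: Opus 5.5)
The plan is to make the filtration structure explicit and then reduce both displayed conditional statements to the unconditional Assumptions~\ref{ass:est} and~\ref{ass:SubG}, through a freezing (substitution) lemma for independent variables.

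\textbf{Step 1: the filtrations.} Define them concretely as
\[
\mathcal F_t^{-}:=\sigma(\bx_0,\bu_0,\xi_0,\dots,\bu_{t-1},\xi_{t-1}),\qquad
\mathcal F_t^{u}:=\mathcal F_t^{-}\vee\sigma(\bu_t),\qquad
\mathcal F_t^{+}:=\mathcal F_t^{u}\vee\sigma(\xi_t).
\]
With these definitions \(\mathcal F_t^{+}=\mathcal F_{t+1}^{-}\), and the chain of inclusions is immediate because each \(\sigma\)-algebra enlarges the previous one by a further generator. By induction on the update~\eqref{eq:update_1}, \(\bx_t\) is a measurable function of \((\bx_0,\bu_0,\xi_0,\dots,\bu_{t-1},\xi_{t-1})\), so \(\bx_t\) is \(\mathcal F_t^{-}\)-measurable. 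Measurability of the stepsize \(\eta_t\), including the null-event convention, is also immediate. The sampling rule says that \(\bu_t\) and \(\xi_t\) are drawn fresh and mutually independent, jointly independent of everything drawn before. It follows that \(\xi_t\) is independent of \(\mathcal F_t^{u}\), and a fortiori of \(\mathcal F_t^{-}\).

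\textbf{Step 2: freezing lemma.} For a measurable \(h\ge0\), or \(h\) integrable, a sub-\(\sigma\)-algebra \(\mathcal G\), a \(\mathcal G\)-measurable \(X\), and \(\xi\) independent of \(\mathcal G\) with law \(P_\xi\), we have
\[
\EE[h(X,\xi)\mid\mathcal G]=H(X),\qquad H(\bx):=\EE_\xi[h(\bx,\xi)].
\]
I would prove this by verifying it first on products \(h=\1_A(\bx)\1_B(\xi)\) and extending by a monotone-class argument.

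\textbf{Step 3: the exponential moment.} Apply the lemma with \(\mathcal G=\mathcal G_t\), \(X=\bx_t\), and \(h(\bx,\xi)=\exp(\lambda\|\be(\bx;\xi)\|^2)\ge0\), so no integrability issue arises. Pointwise in \(\bx\), Assumption~\ref{ass:SubG} gives \(H(\bx)\le\exp(\lambda\sigma^2)\). This yields the exponential-moment bound for both choices of \(\mathcal G_t\).

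\textbf{Step 4: the mean.} The mean-zero claim needs \(\be_t\) to be conditionally integrable. Take \(\lambda=\sigma^{-2}/2\). Since \(\|\be\|\le\lambda^{-1/2}\exp(\lambda\|\be\|^2/2)\), Jensen's inequality together with Step 3 gives
\[
\EE[\|\be_t\|\mid\mathcal G_t]<\infty.
\]
Then apply the lemma to each coordinate of \(h(\bx,\xi)=\be(\bx;\xi)\), splitting into positive and negative parts to stay within the nonnegative case. This gives \(H(\bx)=\nabla f(\bx)-\EE_\xi\nabla f(\bx;\xi)=0\) by Assumption~\ref{ass:est}.

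\textbf{Main obstacle.} The only delicate point is measurability. Step 2 requires \((\bx,\xi)\mapsto\nabla f(\bx;\xi)\) to be jointly measurable. I would argue this from the a.e.\ \(L\)-smoothness in Assumption~\ref{ass:est}: the gradient is a pointwise limit of difference quotients of \(f(\cdot;\xi)\), which are jointly measurable. Alternatively, one can simply state joint measurability as implicit in the oracle model. The remaining steps are routine.
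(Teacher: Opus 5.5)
Your proposal is correct and follows the same route as the paper's proof: \(\bx_t\) is \(\mathcal G_t\)-measurable, \(\xi_t\) is independent of \(\mathcal G_t\) for both choices \(\mathcal G_t\in\{\mathcal F_t^{-},\mathcal F_t^{u}\}\), and Assumptions~\ref{ass:est} and~\ref{ass:SubG} are then applied pointwise at the frozen iterate. Your version makes explicit what the paper leaves implicit: the freezing lemma, the integrability of \(\be_t\) obtained from the exponential-moment bound (so the conditional mean is well defined), and the joint measurability of \((\bx,\xi)\mapsto\nabla f(\bx;\xi)\).
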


\begin{proof}
	The iterate \(\bx_t\) is \(\mathcal F_t^{-}\)-measurable, and
	Algorithm~\ref{alg:SA_1} samples \(\bu_t\) and a fresh oracle sample
	\(\xi_t\) independently at iteration \(t\). Hence conditioning on either
	\(\mathcal F_t^{-}\) or \(\mathcal F_t^{u}\) fixes \(\bx_t\) (and, in the
	latter case, \(\bu_t\)) while leaving \(\xi_t\) distributed as a fresh
	oracle sample. Assumption~\ref{ass:est} gives the conditional mean-zero
	identity, and Assumption~\ref{ass:SubG}, applied pointwise at the realized
	\(\bx_t\), gives the displayed conditional exponential-moment bound.
\end{proof}

Assumption~\ref{ass:est} is standard. Assumption~\ref{ass:SubG} is the
full-vector conditional exponential-moment condition used by
\citet{liu2023revisiting}; it is weaker than almost-sure bounded noise but
stronger than coordinatewise tail control. Proposition~\ref{prop:algorithm_conditional_subg}
transfers this oracle condition to the filtrations generated by
Algorithm~\ref{alg:SA_1}. We do not truncate the noise or condition on a
bounded-noise event. Instead, a conditional sub-Gaussian projection inequality
controls the signed linear term, and the exponential moment of
\(\|\be_t\|^2\) controls the quadratic term. Constants such as
\(C_{\mathrm{sg}}\), \(C_Y\), and \(C_{\mathrm{abs}}\) denote universal
numerical factors and do not alter the displayed rate or its confidence
scaling.

Assumption~\ref{ass:SubG} includes bounded stochastic-gradient noise and also
allows unbounded light-tailed models satisfying the stated exponential-moment
condition. Examples include stochastic quadratic objectives with random linear
terms, finite-support simulation models, and empirical-risk objectives with the
same full-vector tail control. Using the same sample in both function
evaluations is the common-random-numbers construction familiar in simulation
optimization. It removes the additional differencing noise generated by
independent samples and leaves \(\be(\bx;\xi)\) as the stochastic error to be
controlled.

\section{Main Results}

We now state the finite-horizon last-iterate guarantee for
Algorithm~\ref{alg:SA_1}. The result applies when the dimension dominates the
logarithmic horizon and confidence scales. In particular, under
\(d\gtrsim\log(T/\delta)\), the norm-normalized two-point Gaussian method
satisfies
\[
	f(\bx_T)-f(\bx^*)
	=
	\widetilde{\mathcal O}\!\left(\frac{d}{T}\right)
\]
with probability at least \(1-\delta\), after suppressing fixed problem
parameters and logarithmic factors. This is the high-probability last-iterate
analogue of the classical expected zeroth-order rate.

Throughout this section, Algorithm~\ref{alg:SA_1} is run with the displayed
choices of \(T_0\), \(\alpha\), and \(\eta_t\). These parameters satisfy the
stability condition \(\eta_t\le1/(8L\norm{\bu_t}^2)\). For compactness, define
\[
	T_k:=k+T_0,
	\qquad
	\Lambda:=\log(T+T_0),
	\qquad
	J_T
	:=
	1+
	\left\lceil
	\log_2
	\left(
	\frac{T(T+T_0)}{T_0}
	\right)
	\right\rceil,
\]
write
\[
	\Delta_0:=f(\bx_0)-f(\bx^*),
	\qquad
	\mathfrak S
	:=
	e+T_0+d+\Delta_0+L+\mu^{-1}+\sigma^2,
\]
and define the stitched confidence factor
\[
	\Gamma_T(\delta)
	:=
	1+\log\frac1\delta+\log J_T+\log(e+T_0)
	+\log\log(e+\mathfrak S),
	\qquad
	c_\delta:=\frac{\Gamma_T(\delta)}{\Lambda},
	\qquad
	\Theta_T:=\Lambda\Gamma_T(\delta).
\]
Let \(C>0\) be the universal constant in
Lemma~\ref{lem:weighted_scan_bounds}, and set
\[
	c_\rho
	:=
	2\exp\left(
	\frac{2C}{T_0}\Gamma_T(\delta)
	\right).
\]
Here \(c_\rho\) is the suffix constant supplied by the weighted scan,
\(c_\delta\Lambda=\Gamma_T(\delta)\) collects the confidence logarithms, and
\(\Theta_T\) is the induction scale.

\begin{theorem}[High-probability last-iterate rate]
	\label{thm:main_1}
	Let \(T\ge1\), \(0<\delta<1\), and \(\bx_0\ne\bx^*\). Suppose that
	\(f\) satisfies Assumption~\ref{ass:Lmu}, that the stochastic objectives
	\(f(\cdot;\xi)\) satisfy Assumption~\ref{ass:est}, and that the conditional
	sub-Gaussian noise condition in Assumption~\ref{ass:SubG} holds. In addition,
	assume the high-dimensional condition
	\[
		d\ge16\log(6T/\delta).
	\]
	This condition guarantees the simultaneous Gaussian-norm event in
	Lemma~\ref{lem:E_u}; combined with the prefix scan, it also yields the
	polynomial product-weight envelope in
	Lemma~\ref{lem:time_uniform_controls}. Run Algorithm~\ref{alg:SA_1} for
	\(T\) iterations, using the same sample \(\xi_t\) in both function
	evaluations at iteration \(t\).
	Let \(C_{\mathrm{abs}}\ge1\) be a universal numerical constant chosen
	sufficiently large relative to the constants in the auxiliary concentration
	inequalities and Lemma~\ref{lem:time_uniform_controls}. Define
	\begin{equation}\label{eq:cC}
		\begin{aligned}
			\cC := \max\Bigg\{
			&\frac{\Delta_0T_0}{d\Theta_T},
			\frac{8c_\rho T_0\Delta_0}{d\Theta_T},
			\frac{C_{\mathrm{abs}}L\sigma^2c_\rho^2}{\mu^2}
			\left(1+\frac{c_\delta}{T_0}\right),
			\frac{C_{\mathrm{abs}}Lc_\rho\sigma^2}{\mu^2\Gamma_T(\delta)}
			\left(1+\frac{dc_\delta}{T_0}\right),
			\\
			&
			\frac{C_{\mathrm{abs}}c_\rho(1+c_\delta)}{\Gamma_T(\delta)}
			\left(
			1+\frac{L^2}{\mu}+\frac{L^3}{\mu^2}
			\right)
			\Bigg\}.
		\end{aligned}
	\end{equation}
	For \(0\le K\le T\), define
	\begin{equation}
		\label{eq:cE_Delta}
		\cE_{\Delta_K}
		:=
		\left\{
		f(\bx_k)-f(\bx^*)
		\le
		\frac{\cC d\Theta_T}{k+T_0}
		\quad\text{for all } k=0,\dots,K
		\right\}.
	\end{equation}
	Then, with probability at least \(1-\delta\), the events
	\(\cE_{\Delta_K}\) hold simultaneously for all \(K\le T\). In particular,
	\begin{equation}
		\label{eq:improved_rate_weighted}
		f(\bx_T)-f(\bx^*)
		\le
		\cC\,
		\frac{d\Theta_T}{T+T_0}
		=
		\mathcal O\left(
		\frac{c_\rho T_0\Delta_0}{T+T_0}
		+
		c_\rho^2
		\frac{d\Lambda\Gamma_T(\delta)}{T+T_0}
		\left(1+\frac{\Gamma_T(\delta)}{T_0}\right)
		\right).
	\end{equation}
\end{theorem}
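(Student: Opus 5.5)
The plan is to run a one-step descent recursion for \(\Delta_t:=f(\bx_t)-f(\bx^*)\), unroll it with random product weights, and close the bound by induction on \(K\) on a single high-probability event.

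\textbf{One-step descent.} Write the estimator as
\[
\bg(\bx_t;\xi_t)=\bu_t\bu_t^\top\nabla f(\bx_t)-\bu_t\bu_t^\top\be_t+\bu_t b_t,
\]
where \(b_t\) is the smoothing bias. By \(L\)-smoothness of \(f(\cdot;\xi)\) in Assumption~\ref{ass:est}, \(|b_t|\le L\alpha\|\bu_t\|^2/2\). Apply \eqref{eq:L} to the update \eqref{eq:update_1} and use the stability bound \(\eta_t\le 1/(8L\|\bu_t\|^2)\) together with \(\eta_t\|\bu_t\|^2=4d/(\mu T_t)\). This should give
\[
\Delta_{t+1}\le \Delta_t-\tfrac{\eta_t}{2}\|\bu_t\|^2\zeta_t\|\nabla f(\bx_t)\|^2+\eta_t(\bu_t^\top\nabla f(\bx_t))(\bu_t^\top\be_t)+Q_t,
\]
with \(Q_t\lesssim L\eta_t^2\|\bu_t\|^2\big((\bu_t^\top\be_t)^2+b_t^2\big)\) (a cross term is absorbed by Young's inequality). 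Using \(\|\nabla f\|^2\ge 2\mu\Delta_t\), we get \(\Delta_{t+1}\le(1-\tfrac{4d\zeta_t}{T_t})\Delta_t+M_t+Q_t\). Unrolling gives
\[
\Delta_K\le \Pi_{0,K}\Delta_0+\sum_{k<K}\Pi_{k+1,K}(M_k+Q_k),\qquad \Pi_{k,K}=\prod_{t=k}^{K-1}(1-4d\zeta_t/T_t).
\]

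\textbf{Probabilistic ingredients.} Let \(\cE\) be the intersection of four events, each of probability at least \(1-\delta/4\) by a union bound over \(J_T\) dyadic blocks and stitching (this is where \(\Gamma_T(\delta)\) comes from):
\begin{itemize}
\item[(a)] the Gaussian-norm event of Lemma~\ref{lem:E_u}, giving \(\|\bu_t\|^2\in[d/2,2d]\), which uses \(d\ge16\log(6T/\delta)\);
\item[(b)] the weighted-scan event of Lemma~\ref{lem:weighted_scan_bounds}, which uses the conditional \(\mathrm{Beta}(1/2,(d-1)/2)\) law of \(\zeta_t\) to show \(\sum_{t=k}^{K-1}4d\zeta_t/T_t\ge 4\log(T_K/T_k)-2C\Gamma_T(\delta)/T_0\) uniformly in \(k\le K\), hence \(\Pi_{k,K}\le c_\rho(T_k/T_K)^{4}\) (any exponent \(>1\) suffices);
\item[(c)] the quadratic-noise event from the exponential moment in Proposition~\ref{prop:algorithm_conditional_subg} (pre-direction form);
\item[(d)] the product-martingale boundary event for \(\sum_k\Pi_{k+1,K}M_k\).
\end{itemize}
On (a)--(c), the deterministic and quadratic parts are bounded by \(c_\rho T_0\Delta_0/T_K\) and \(\lesssim c_\rho L\sigma^2 d\,\Gamma_T(\delta)(1+c_\delta/T_0)/(\mu^2T_K)\). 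The bias contributes \(\lesssim L^2\alpha^2 d^3/(\mu^2 T_K)\), which is \(O(d/T_K)\) by the choice of \(\alpha\). These pieces match the entries of \(\cC\) in \eqref{eq:cC}.

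\textbf{Induction.} Assume \(\cE_{\Delta_{K-1}}\) holds on \(\cE\). The martingale increment \(M_k\) is conditionally sub-Gaussian given \(\mathcal F_k^u\), with proxy
\[
\eta_k^2(\bu_k^\top\nabla f)^2\|\bu_k\|^2\sigma^2\lesssim \frac{d^2\zeta_k}{\mu^2T_k^2}\,\sigma^2 L\Delta_k,
\]
using \(\|\nabla f\|^2\le 2L\Delta_k\). The induction hypothesis then bounds the proxy by \(\cC d\Theta_T\)-dependent terms. To keep the process adapted, I would not use the non-adapted \(\Pi_{k+1,K}\) directly. Instead I would work with the product martingale \(\sum_k \Pi_{0,k+1}^{-1}M_k\), whose increments are adapted, and apply a stitched Freedman/self-normalized bound with the angles \(\zeta_t\) enlarged to \(\zeta_t+\)slack. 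Multiplying back by \(\Pi_{0,K}\) gives a bound \(\lesssim c_\rho\sqrt{\cC}\,d\sqrt{\Theta_T\cdot\Gamma_T(\delta)\,L\sigma^2/\mu^2}/T_K\). Since \(\sqrt{\cC\cdot X}\le \cC/4+X\), and \(\cC\) dominates \(X\) by the definition of \(\cC\), we obtain \(\Delta_K\le\cC d\Theta_T/T_K\), which closes the induction. The induction is valid on the whole of \(\cE\), so the conclusion holds simultaneously for all \(K\le T\).

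\textbf{Main obstacle.} The hard step is (d). The weights \(\Pi_{k+1,K}\) depend on future directions, so the sum is not a martingale in \(k\). Bounding \(|\Pi|\) by its envelope before concentrating loses the cancellation that yields \(d/T\). My first attempt would be exponential supermartingales of the form
\[
\exp\Big(\lambda\sum_k w_kM_k-\tfrac{\lambda^2}{2}\sum_k w_k^2\,\mathrm{proxy}_k\Big),
\]
with predictable weights \(w_k=\Pi_{0,k+1}^{-1}\) where \(\Pi_{0,k+1}\) includes \(\zeta_k\), which is \(\mathcal F_k^u\)-measurable, so this fits the post-direction conditioning. I would make the envelope on \(w_k\) polynomial in \(T_k/T_0\) via the prefix scan and the condition on \(d\), and stitch over \(\lambda\) on a geometric grid of size \(J_T\).
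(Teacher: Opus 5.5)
Your plan is essentially the paper's argument. You unroll with the realized suffix products, use the weighted scan for the envelopes $\rho_{K,k}$ and the prefix scan plus $d\ge16\log(6T/\delta)$ for a polynomial bound on $P_K^{-2}$, rewrite the signed term as $P_K\sum_{k<K}\vartheta_k/P_{k+1}$ with a Ville/peeling self-normalized bound (used in the paper in the implication form $A_K^\Pi\le\bar A_K\Rightarrow\sum_k Y_{K,k}^\Pi\lesssim\sigma\sqrt{\bar A_K\Gamma_T(\delta)}$), and close the induction through the entries of $\cC$; your natural filtration $\mathcal F_k^u$ works as well as the paper's angle-enlarged one, since $P_{k+1}$ is already $\mathcal F_k^u$-measurable. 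Two points to tighten: Lemma~\ref{lem:weighted_scan_bounds} only yields half the conditional mean, so your factor $1-4d\zeta_t/T_t$ gives $(T_k/T_K)^2$ rather than $(T_k/T_K)^4$ (exponent $1$ with a harmless $\Lambda$ is all the proof needs), and the random angles in the variance proxy must be controlled by the prefix scan or the terminal Laurent--Massart event $\widehat{\cE}_{V_T}$, since bounding $\zeta_k\le1$ there would cost a factor $d$ in the rate.
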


Theorem~\ref{thm:main_1} yields the following fixed-horizon convergence rate.

\begin{corollary}[Simplified high-probability convergence rate]
	\label{cor:main_1}
	Let the assumptions and parameter choices of Theorem~\ref{thm:main_1}
	hold. If \(T_0\gtrsim\Gamma_T(\delta)\), so that \(c_\rho\) is bounded by
	a universal numerical constant, then with probability at least
	\(1-\delta\),
	\begin{equation}
	\label{eq:simplified_rate}
		f(\bx_T)-f(\bx^*)
		\le
		C
		\left[
		\frac{T_0\Delta_0}{T+T_0}
		+
		\frac{d\,\Lambda\Gamma_T(\delta)}{T+T_0}
		\left(1+\frac{\Gamma_T(\delta)}{T_0}\right)
		\right],
	\end{equation}
	where \(C\) depends only on universal numerical constants and the fixed
	parameters in Eq.~\eqref{eq:cC}. Suppressing these parameters and logarithmic
	factors, the last iterate satisfies
	\[
		f(\bx_T)-f(\bx^*)
		=
		\widetilde{\mathcal O}\!\left(\frac{d}{T}\right).
	\]
\end{corollary}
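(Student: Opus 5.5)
The corollary follows from Theorem~\ref{thm:main_1} by simplifying the right-hand side of Eq.~\eqref{eq:improved_rate_weighted}. We work on the probability-$(1-\delta)$ event supplied by the theorem, on which
\[
f(\bx_T)-f(\bx^*)\le \cC\,\frac{d\Theta_T}{T+T_0}.
\]
It therefore suffices to bound $\cC d\Theta_T$ by the bracket in Eq.~\eqref{eq:simplified_rate}, up to a constant depending only on universal numerical constants and the fixed parameters.

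\emph{Step 1: bound $c_\rho$.} Under the hypothesis $T_0\ge c_0\Gamma_T(\delta)$, the exponent in $c_\rho$ is at most $2C/c_0$. Hence
\[
c_\rho\le 2e^{2C/c_0}=:\bar c,
\]
which is universal.

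\emph{Step 2: bound each entry of the maximum in Eq.~\eqref{eq:cC}, multiplied by $d\Theta_T$.}
\begin{itemize}
\item The first two entries give at most $8\bar c\,T_0\Delta_0$, which is the first term of the bracket.
\item The third entry gives $C_{\mathrm{abs}}L\sigma^2\bar c^2\mu^{-2}\,d\Lambda\Gamma_T(\delta)\bigl(1+c_\delta/T_0\bigr)$. Since $c_\delta=\Gamma_T(\delta)/\Lambda\le\Gamma_T(\delta)$ (using $\Lambda\ge1$, since $T+T_0\ge e$), this is at most $C\,d\Lambda\Gamma_T(\delta)\bigl(1+\Gamma_T(\delta)/T_0\bigr)$.
\item The fourth entry gives $C_{\mathrm{abs}}L\bar c\sigma^2\mu^{-2}\,d\Lambda\bigl(1+dc_\delta/T_0\bigr)$. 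Using $T_0=32dL/\mu$, we have $d/T_0=\mu/(32L)\le1$. Therefore $dc_\delta/T_0\le c_\delta$, and $d\Lambda(1+c_\delta)=d(\Lambda+\Gamma_T(\delta))\le 2d\Lambda\Gamma_T(\delta)$, because $\Lambda,\Gamma_T(\delta)\ge1$.
\item The fifth entry gives $C_{\mathrm{abs}}\bar c\,(1+c_\delta)\bigl(1+L^2/\mu+L^3/\mu^2\bigr)d\Lambda$. By the same inequality $\Lambda(1+c_\delta)\le2\Lambda\Gamma_T(\delta)$, this is also $\lesssim d\Lambda\Gamma_T(\delta)$.
\end{itemize}
Bounding the maximum by the sum of its entries yields Eq.~\eqref{eq:simplified_rate}. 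The constant $C$ collects $\bar c$, $C_{\mathrm{abs}}$, $L$, $\mu$, and $\sigma^2$.

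\emph{Step 3: the $\widetilde{\mathcal O}(d/T)$ statement.} The quantities $T_0\Delta_0$ and $\Gamma_T(\delta)/T_0\lesssim1$ are fixed, and $T+T_0\ge T$. The term $\Lambda\Gamma_T(\delta)$ is polylogarithmic in $T$ and in $1/\delta$, since $J_T$ is logarithmic in $T$.

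There is no real obstacle. The only care needed is to check that $\Lambda\ge1$ and $\Gamma_T(\delta)\ge1$, so that the products $\Lambda(1+c_\delta)$ can be absorbed into $\Lambda\Gamma_T(\delta)$. We also note that the $T_0\Delta_0$ term is dominated by the $d$-term only after suppressing fixed parameters.
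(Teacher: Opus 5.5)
Your proof is correct and takes essentially the same approach as the paper. In both, you take the theorem's probability-$(1-\delta)$ bound, use $T_0\gtrsim\Gamma_T(\delta)$ to get $c_\rho=\mathcal O(1)$, and absorb constants; the only difference is that you start from the explicit bound $\cC d\Theta_T/(T+T_0)$ and redo the entry-by-entry simplification of Eq.~\eqref{eq:cC}, whereas the paper quotes the resulting big-$\mathcal O$ form in Eq.~\eqref{eq:improved_rate_weighted} directly, having already done that simplification at the end of the theorem's proof.
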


\begin{corollary}[High-probability oracle complexity for admissible horizons]
	\label{cor:oracle_complexity}
	Under the assumptions and parameter choices of
	Corollary~\ref{cor:main_1}, let \(\varepsilon>0\). Suppose that an admissible
	integer horizon \(T\) exists such that \(d\ge16\log(6T/\delta)\) and
	\[
		T+T_0
		\ge
		\frac{C}{\varepsilon}
		\left[
		T_0\Delta_0
		+
		d\,\Lambda\Gamma_T(\delta)
		\left(1+\frac{\Gamma_T(\delta)}{T_0}\right)
		\right].
	\]
	Then \(f(\bx_T)-f(\bx^*)\le\varepsilon\) with probability at least
	\(1-\delta\). This statement is conditional on the existence of such a
	horizon. Suppressing fixed problem parameters and logarithmic factors gives
	\(\widetilde{\mathcal O}(d/\varepsilon)\) iterations, hence the same number
	of two-point oracle calls and twice as many individual function evaluations.
\end{corollary}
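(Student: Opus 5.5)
The statement to prove is the last corollary, Corollary~\ref{cor:oracle_complexity}. I will deduce it directly from Corollary~\ref{cor:main_1}; no new probabilistic argument is needed. Fix \(\varepsilon>0\) and an admissible horizon \(T\) as in the hypothesis. Then \(d\ge16\log(6T/\delta)\) holds, and the standing assumption \(T_0\gtrsim\Gamma_T(\delta)\) of Corollary~\ref{cor:main_1} is inherited. Hence all hypotheses of Theorem~\ref{thm:main_1} and Corollary~\ref{cor:main_1} hold for this \(T\). Eq.~\eqref{eq:simplified_rate} therefore gives an event \(\mathcal A\) with \(\PP(\mathcal A)\ge1-\delta\), on which
\[
f(\bx_T)-f(\bx^*)\le \frac{C}{T+T_0}\left[T_0\Delta_0+d\Lambda\Gamma_T(\delta)\left(1+\frac{\Gamma_T(\delta)}{T_0}\right)\right].
\]

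The key step is to use the same constant \(C\) as in Eq.~\eqref{eq:simplified_rate}; the statement implicitly does this by writing ``\(C\)'' under the assumptions of Corollary~\ref{cor:main_1}. Dividing the assumed lower bound on \(T+T_0\) into the bracket shows the right-hand side above is at most \(\varepsilon\). So \(f(\bx_T)-f(\bx^*)\le\varepsilon\) on \(\mathcal A\). The quantities \(\Lambda\) and \(\Gamma_T(\delta)\) depend on \(T\) itself, which is why the statement is phrased conditionally on an admissible \(T\) existing. With \(T\) fixed, however, they are deterministic numbers, so no circularity arises.

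For the complexity count, I suppress the fixed parameters \(L,\mu,\sigma^2,\Delta_0\). This makes \(T_0=32dL/\mu\) of order \(d\). The term \(T_0\Delta_0\) is then \(\cO(d)\), and \(\Gamma_T(\delta)/T_0=\cO(1)\) by the assumption \(T_0\gtrsim\Gamma_T(\delta)\). The bracket is therefore \(\widetilde{\cO}(d)\), with \(\Lambda\Gamma_T(\delta)\) counted as logarithmic factors, and the required \(T\) is \(\widetilde{\cO}(d/\varepsilon)\). Each iteration of Algorithm~\ref{alg:SA_1} makes one two-point query, which consists of two function evaluations \(f(\bx_t\pm\alpha\bu_t;\xi_t)\). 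This gives \(T\) two-point oracle calls and \(2T\) individual evaluations.

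The only delicate point is keeping the constant \(C\) consistent between the two corollaries. The substantive difficulty lies entirely in Theorem~\ref{thm:main_1}, which I take as given here.
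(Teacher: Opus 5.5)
Your proposal is correct and follows the paper's proof: you apply Corollary~\ref{cor:main_1} at the admissible horizon, where the dimension condition is assumed. You then observe that the lower bound on \(T+T_0\) makes the right-hand side of Eq.~\eqref{eq:simplified_rate} at most \(\varepsilon\) with the same constant \(C\), and read off the \(\widetilde{\mathcal O}(d/\varepsilon)\) count from \(T_0=\Theta(dL/\mu)\), the logarithmic dependence through \(\Lambda\) and \(\Gamma_T(\delta)\), and two function evaluations per iteration. Your remarks on keeping \(C\) consistent and on \(\Lambda,\Gamma_T(\delta)\) being deterministic once \(T\) is fixed simply make explicit what the paper leaves implicit.
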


\begin{proof}
	The displayed lower bound on \(T+T_0\) makes the right-hand side of
	Eq.~\eqref{eq:simplified_rate} no larger than \(\varepsilon\). The
	high-dimensional condition ensures that Corollary~\ref{cor:main_1} applies at
	the selected horizon, so its probability guarantee carries over directly.
	Because \(T_0=\Theta(dL/\mu)\) and the remaining dependence on \(T\) and
	\(\delta\) enters logarithmically through \(\Lambda\) and
	\(\Gamma_T(\delta)\), suppressing fixed problem parameters and logarithmic
	factors yields \(\widetilde{\mathcal O}(d/\varepsilon)\) iterations whenever
	an admissible horizon exists. Each iteration makes one two-point oracle
	query, that is, two function evaluations using the same sample.
\end{proof}

\begin{remark}[Interpretation of the corollaries]
	Corollary~\ref{cor:main_1} is a fixed-horizon result, whereas
	Corollary~\ref{cor:oracle_complexity} expresses the same inequality as an
	\(\varepsilon\)-accuracy statement. The confidence level enters only through
	\(\Gamma_T(\delta)\), in contrast to the polynomial \(1/\delta\) loss from
	a Markov conversion. The condition \(d\ge16\log(6T/\delta)\) is a
	finite-horizon compatibility requirement of the Gaussian concentration
	argument. For fixed \(d\) and \(\delta\), it is equivalent to
	\(T\le(\delta/6)e^{d/16}\). Thus the oracle-complexity corollary applies to
	accuracy levels admitting such a horizon; it is not an unconditional
	\(\varepsilon\downarrow0\) statement at fixed \(d\) and \(\delta\).
	Finally, the notation \(\widetilde{\mathcal O}(d/T)\) treats the full-vector
	noise scale \(\sigma\) as a fixed problem parameter. Any dimension dependence
	of \(\sigma\) remains explicit through the constants in
	Theorem~\ref{thm:main_1}.
\end{remark}

\section{Convergence Analysis of Zeroth-Order Stochastic Gradient Descent}
\label{sec:zsgd}

This section proves Theorem~\ref{thm:main_1}. We first derive a one-step
descent inequality, then unroll it into weighted suffix terms and control each
term on a common high-probability event.

\begin{lemma}
\label{lem:dec_1}
Let \(f\) be \(L\)-smooth and \(\mu\)-strongly convex, and let
\(f(\cdot;\xi)\) be \(L\)-smooth. If
\[
	\eta_t\le \frac{1}{8L\|\bu_t\|^2},
\]
then
\begin{equation}
	\label{eq:dec_1}
	\begin{aligned}
		\Delta_{t+1}
		\leq&
		\left(1 - \frac{\mu\eta_t}{2}
		\frac{(\bu_t^\top\nabla f(\bx_t))^2}{\|\nabla f(\bx_t)\|^2}
		\right)\cdot \Delta_t
		+
	\eta_t \dotprod{\nabla f(\bx_t), \bu_t\bu_t^\top \be_t}
	\\
	&
	+
	2L\eta_t^2\norm{\bu_t}^2|\bu_t^\top \be_t|^2
	+ \Delta_{\alpha, t}
	\end{aligned}
\end{equation}
where \(\Delta_t\), \(\be_t\), and \(\Delta_{\alpha,t}\) are defined by
\begin{equation}
\label{eq:def}
\Delta_t := f(\bx_t)-f(\bx^*),
\qquad
\be_t := \nabla f(\bx_t)-\nabla f(\bx_t;\xi_t),
\qquad
\Delta_{\alpha,t}
:=
\frac{\eta_t\beta_t^2}{2}
+L\eta_t^2\beta_t^2\norm{\bu_t}^2.
\end{equation}
Here \(\beta_t\) is the finite-difference bias coefficient defined by
\[
\bg(\bx_t;\xi_t)
=
\bu_t\bu_t^\top\nabla f(\bx_t;\xi_t)+\beta_t\bu_t,
\qquad
|\beta_t|\le \frac{L\alpha}{2}\|\bu_t\|^2.
\]
\end{lemma}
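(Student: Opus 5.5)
We follow the standard smooth-descent argument, keeping the random alignment factor explicit. The starting point is the decomposition of the estimator. Since \(f(\cdot;\xi_t)\) is \(L\)-smooth, the one-dimensional function \(s\mapsto f(\bx_t+s\bu_t;\xi_t)\) has Taylor remainders bounded by \(\tfrac{L}{2}s^2\|\bu_t\|^2\) at \(s=\pm\alpha\). In the symmetric difference the linear terms double and the remainders partly cancel, giving
\[
\frac{f(\bx_t+\alpha\bu_t;\xi_t)-f(\bx_t-\alpha\bu_t;\xi_t)}{2\alpha}=\bu_t^\top\nabla f(\bx_t;\xi_t)+\beta_t .
\]
Here \(|\beta_t|\le \tfrac{1}{2\alpha}\cdot 2\cdot\tfrac{L}{2}\alpha^2\|\bu_t\|^2=\tfrac{L\alpha}{2}\|\bu_t\|^2\), which is the stated representation. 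Next write \(\nabla f(\bx_t;\xi_t)=\nabla f(\bx_t)-\be_t\) and set \(a_t:=\bu_t^\top\nabla f(\bx_t)\). Then
\[
\bg(\bx_t;\xi_t)=(a_t-\bu_t^\top\be_t+\beta_t)\bu_t .
\]

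Apply the \(L\)-smoothness inequality \eqref{eq:L} of \(f\) along the step \(\bx_{t+1}-\bx_t=-\eta_t\bg\):
\[
\Delta_{t+1}\le\Delta_t-\eta_t a_t(a_t-\bu_t^\top\be_t+\beta_t)+\tfrac{L}{2}\eta_t^2\|\bu_t\|^2(a_t-\bu_t^\top\be_t+\beta_t)^2 .
\]
The cross term \(\eta_t a_t\,\bu_t^\top\be_t\) equals \(\eta_t\dotprod{\nabla f(\bx_t),\bu_t\bu_t^\top\be_t}\). This signed martingale term is kept exactly as it stands and is not absorbed.

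The bias cross term is handled by Young's inequality, \(\eta_t|a_t\beta_t|\le \tfrac{\eta_t}{4}a_t^2+\eta_t\beta_t^2\). For the quadratic term we use \((x+y+z)^2\le 2x^2+4y^2+4z^2\), with \(x=a_t\), \(y=\bu_t^\top\be_t\), \(z=\beta_t\), and then the stepsize condition \(L\eta_t\|\bu_t\|^2\le 1/8\). This gives \(L\eta_t^2\|\bu_t\|^2a_t^2\le \tfrac{\eta_t}{8}a_t^2\). The noise part is \(2L\eta_t^2\|\bu_t\|^2|\bu_t^\top\be_t|^2\), and the bias part is \(2L\eta_t^2\|\bu_t\|^2\beta_t^2\).

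The main obstacle is the constant bookkeeping. The Young split must leave enough of the coefficient \(-\eta_t a_t^2\) and must produce bias constants matching \(\Delta_{\alpha,t}=\tfrac{\eta_t}{2}\beta_t^2+L\eta_t^2\beta_t^2\|\bu_t\|^2\). I would therefore split more carefully. First write \(a_t+\beta_t\) as a single quantity, so that
\[
-\eta_t a_t(a_t+\beta_t)\le -\tfrac{\eta_t}{2}a_t^2+\tfrac{\eta_t}{2}\beta_t^2 .
\]
Then expand the square as \(((a_t+\beta_t)-\bu_t^\top\be_t)^2\le 2(a_t+\beta_t)^2+2(\bu_t^\top\be_t)^2\). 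Next bound \(2(a_t+\beta_t)^2\le 4a_t^2+4\beta_t^2\), and use the stepsize condition to reduce the resulting \(a_t^2\) contribution to at most \(\tfrac{\eta_t}{4}a_t^2\). This leaves at least \(\tfrac{\eta_t}{4}a_t^2\) of descent. The bias coefficients may come out slightly off (for instance a \(2L\) in place of \(L\)), in which case we tune the Young parameter; the factor \(8\) in the stepsize condition leaves room for this.

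Finally, strong convexity gives the Polyak–Łojasiewicz inequality \(\|\nabla f(\bx_t)\|^2\ge 2\mu\Delta_t\). Hence
\[
\tfrac{\eta_t}{4}a_t^2=\tfrac{\eta_t}{4}\,\frac{a_t^2}{\|\nabla f(\bx_t)\|^2}\,\|\nabla f(\bx_t)\|^2\ge \tfrac{\mu\eta_t}{2}\,\frac{a_t^2}{\|\nabla f(\bx_t)\|^2}\,\Delta_t .
\]
Normalizing by \(\|\nabla f(\bx_t)\|^2\) is legitimate because, by Proposition~\ref{prop:nonvanishing_gradients}, the gradient is almost surely nonzero. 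This produces the contraction factor in \eqref{eq:dec_1} and completes the proof.
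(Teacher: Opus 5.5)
Your route is the paper's: $L$-smoothness of $f$ along the step, Young's inequality with weight $\tfrac12$ on the bias cross term, a split of $\tfrac{L}{2}\eta_t^2\norm{\bu_t}^2(a_t-\bu_t^\top\be_t+\beta_t)^2$, the stepsize condition to keep $\tfrac{\eta_t}{4}a_t^2$, and Polyak--{\L}ojasiewicz. Your $-\eta_t a_t(a_t+\beta_t)\le-\tfrac{\eta_t}{2}a_t^2+\tfrac{\eta_t}{2}\beta_t^2$ is exactly the paper's Young step. The one step you leave open is the constant bookkeeping in the quadratic term, and the remedy you sketch relies on slack that is not there.

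Your split $\bigl((a_t+\beta_t)-\bu_t^\top\be_t\bigr)^2\le4a_t^2+4\beta_t^2+2(\bu_t^\top\be_t)^2$ gives $2L\eta_t^2\beta_t^2\norm{\bu_t}^2$, twice what $\Delta_{\alpha,t}$ allows. It also gives $L\eta_t^2\norm{\bu_t}^2(\bu_t^\top\be_t)^2$, half what the noise term allows. The factor $8$ leaves no room. With $a_t^2$-weight $4$ in the square, the stepsize condition gives exactly $2L\eta_t^2\norm{\bu_t}^2a_t^2\le\tfrac{\eta_t}{4}a_t^2$. That uses the whole descent budget at $L\eta_t\norm{\bu_t}^2=\tfrac18$, a value the algorithm actually attains at $t=0$. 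Suppose you keep the equal-weight outer split $2(a_t+\beta_t)^2+2(\bu_t^\top\be_t)^2$. Then one checks that no choice of the Young weight on $-\eta_t a_t\beta_t$, or of the inner split of $(a_t+\beta_t)^2$, recovers the coefficient $L$ on $\beta_t^2$ at that stepsize.

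The slack is in the noise coefficient. The inequality that closes the constants is
\[
(a_t-\bu_t^\top\be_t+\beta_t)^2\le4a_t^2+4(\bu_t^\top\be_t)^2+2\beta_t^2 .
\]
In your grouping it follows from $(p-q)^2\le\tfrac43p^2+4q^2$ with $p=a_t+\beta_t$ and $q=\bu_t^\top\be_t$, followed by $\tfrac43(a_t+\beta_t)^2\le4a_t^2+2\beta_t^2$. The paper obtains the same bound by grouping the other way. It first separates $\beta_t\bu_t$ via $\norm{A+B}^2\le2\norm{A}^2+2\norm{B}^2$, and then uses $(a_t-\bu_t^\top\be_t)^2\le2a_t^2+2(\bu_t^\top\be_t)^2$.

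With this substitution the rest of your argument goes through verbatim and yields exactly \eqref{eq:dec_1}. Without it you prove the lemma only with $2L$ in place of $L$ in $\Delta_{\alpha,t}$. That would be harmless downstream, since the change is absorbed into universal constants, but it is not the stated inequality.
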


Lemma~\ref{lem:dec_1} is the basic one-step estimate. Iterating
Eq.~\eqref{eq:dec_1} produces a suffix-product recursion. The following two
lemmas control its random contraction factors and state the resulting unrolled
inequality.

\begin{lemma}
\label{lem:cE_rho}
Suppose that Assumption~\ref{ass:Lmu} holds, \(d\ge2\), and
\(\bx_0\ne\bx^*\). Let \(\{\bu_t\}\) and \(\{\bx_t\}\) be the sequences
generated by Algorithm~\ref{alg:SA_1} with fresh independent Gaussian
directions. Let \(T_0=32dL/\mu\), \(T_t=t+T_0\),
\(\Lambda:=\log(T+T_0)\), and \(0<\delta<1\). Define
\[
	J_T
	:=
	1+
	\left\lceil
	\log_2
	\left(
	\frac{T(T+T_0)}{T_0}
	\right)
	\right\rceil
\]
and let \(\Gamma_T(\delta)\) be any number satisfying
\[
	\Gamma_T(\delta)\ge
	1+\log\frac1\delta+\log J_T+\log(e+T_0).
\]
Let \(C>0\) be a universal constant large enough for
Lemma~\ref{lem:weighted_scan_bounds}. Define the events
\begin{align}
	\cE_{\rho_K} :=& \left\{ \exp\left(- 2d \sum_{t=0}^{K-1} \frac{\zeta_t}{T_t} \right) \leq \rho_K \right\} \label{eq:cE_rho}
	\\
	\cE_{\rho_{K,k}} :=& \left\{ \exp\left(- 2d \sum_{t=k+1}^{K-1} \frac{\zeta_t}{T_t} \right) \leq \rho_{K,k} \right\}, \label{eq:cE_rho_K}
\end{align}
and the prefix upper-tail event
\[
	\cE_{\rho}^{+}
	:=
	\left\{
	d\sum_{t=0}^{K-1}\frac{\zeta_t}{T_t}
	\le
	C\Lambda+C\frac{\Gamma_T(\delta)}{T_0}
	\quad K=1,\dots,T
	\right\}.
\]
Here \(\zeta_t\) is the angle variable defined after
Proposition~\ref{prop:nonvanishing_gradients}; it is well-defined almost
surely under the assumptions above.
The constants \(\rho_K\) and \(\rho_{K,k}\) are defined by
\begin{align}
	\rho_K :=& \exp\left(- \sum_{t=0}^{K-1} \frac{1}{T_t} + \frac{2C}{T_0}\Gamma_T(\delta) \right) , \label{eq:rho_k}
	\\
	\rho_{K,k}:=& \exp\left(- \sum_{t=k+1}^{K-1} \frac{1}{T_t} + \frac{2C}{T_0}\Gamma_T(\delta) \right). \label{eq:rho_kk}
\end{align}
Then, with probability at least \(1-\delta\), the joint event
\[
	\bigcap_{K=1}^{T}
	\left[
	\cE_{\rho_K}\cap\left(\cap_{k=0}^{K-1}\cE_{\rho_{K,k}}\right)
	\right]
	\cap \cE_{\rho}^{+}
\]
holds. In particular, the suffix-product comparisons used in
Lemma~\ref{lem:dec_2} hold simultaneously for every terminal time \(K\le T\).
\end{lemma}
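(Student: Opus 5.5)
The plan is to reduce every event in the statement to concentration of the weighted angle sums
\[
S_{a,b}:=\sum_{t=a}^{b-1}\frac{d\zeta_t-1}{T_t},\qquad 0\le a<b\le T.
\]
These are handled through the martingale-difference structure of \(d\zeta_t-1\) under \(\mathcal F_t^{-}\). By the conditional law \(\zeta_t\mid\mathcal F_t^{-}\sim\mathrm{Beta}(1/2,(d-1)/2)\), we have \(\EE[d\zeta_t\mid\mathcal F_t^{-}]=1\). Moreover \(d\zeta_t\) is conditionally sub-exponential with universal constants, since it is a normalized one-dimensional projection, dominated in the relevant sense by a \(\chi^2_1\) variable.

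The events reduce to \(S_{a,b}\) as follows. The event \(\cE_{\rho_{K,k}}\) is equivalent to
\[
-2\sum_{t=k+1}^{K-1} d\zeta_t/T_t\le -\sum_{t=k+1}^{K-1}1/T_t+\tfrac{2C}{T_0}\Gamma_T(\delta),
\]
that is, to
\[
\sum_{t=k+1}^{K-1}(d\zeta_t-1)/T_t\ge -\tfrac12\sum_{t=k+1}^{K-1}1/T_t-\tfrac{C}{T_0}\Gamma_T(\delta).
\]
So I only need a lower-tail bound with a multiplicative slack of half the mean, plus an additive slack. The event \(\cE_{\rho_K}\) is the case \(k=-1\). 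The event \(\cE_\rho^+\) is an upper-tail bound on prefixes; since \(\sum_{t<K}1/T_t\le\Lambda\), it needs \(S_{0,K}\le (C-1)\Lambda+C\Gamma_T(\delta)/T_0\).

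For the lower tail I would use the one-sided exponential supermartingale. For \(\lambda>0\), the variable \(d\zeta_t\ge0\) satisfies
\[
\EE[e^{-\lambda(d\zeta_t-1)/T_t}\mid\mathcal F_t^-]\le \exp\bigl(c\lambda^2/T_t^2\bigr)
\]
for all \(\lambda>0\), since nonnegative variables have sub-Gaussian lower tails with variance proxy controlled by the second moment \(\EE[(d\zeta_t)^2]\le3\). Hence \(M_n=\exp\bigl(-\lambda\sum_{t<n}(d\zeta_t-1)/T_t-c\lambda^2\sum_{t<n}T_t^{-2}\bigr)\) is a supermartingale. Two facts make this work: \(\sum_{t\ge k}T_t^{-2}\le 1/T_k\), and the required deviation includes \(\tfrac12\sum 1/T_t\). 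Taking \(\lambda\asymp T_k\) on a suffix starting near \(k\) then gives failure probability roughly \(e^{-cT_k\cdot C\Gamma_T/T_0}\le e^{-c'C\Gamma_T}\) using \(T_k\ge T_0\).

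To get uniformity over all pairs \((k,K)\) without paying \(\log T^2\), I would stitch over dyadic blocks of the starting index: \(T_k\in[2^jT_0,2^{j+1}T_0)\), \(j=0,\dots,J_T-1\). Within each block I fix \(\lambda_j=2^jT_0\) and apply Ville's maximal inequality to the suffix supermartingale started at the block's left endpoint. This controls all terminal times \(K\) at once. Different starting points \(k\) inside one block differ by a prefix segment whose weighted contribution is bounded using the same \(\lambda_j\); because everything is tied to a reference start, the nonnegative terms only help in the lower-tail direction, with an adjustment to the half-mean slack. A union bound over \(J_T\) blocks costs \(\log J_T\), which is absorbed into \(\Gamma_T(\delta)\). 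Choosing \(C\) large makes the total \(\le\delta/2\).

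For \(\cE_\rho^+\), I would use the upper-tail supermartingale \(\exp(\lambda\sum(d\zeta_t-1)/T_t-c\lambda^2\sum T_t^{-2})\), valid for \(\lambda\le T_0/c''\) by sub-exponentiality. Take \(\lambda=T_0/c''\) and apply Ville's inequality uniformly in \(K\). This gives \(S_{0,K}\le c\lambda/T_0+\log(2/\delta)/\lambda\lesssim 1+\Gamma_T(\delta)/T_0\) with probability \(1-\delta/2\), which fits inside \(C\Lambda+C\Gamma_T/T_0\). A union bound over the two halves then finishes the proof.

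The main obstacle is the lower-tail stitching uniformly in both \(k\) and \(K\). I would present that part as Lemma~\ref{lem:weighted_scan_bounds}, taking care that the half-mean slack absorbs the within-block variation of the starting index.
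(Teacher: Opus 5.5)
Your reduction matches the paper's proof. Multiply the scan bounds by $2d$ and exponentiate; $k=-1$ gives $\cE_{\rho_K}$, $k=K-1$ is trivial since $\rho_{K,K-1}\ge1$, and $\cE_\rho^+$ comes from the prefix bound. The paper gets the scan bounds simply by invoking Lemma~\ref{lem:weighted_scan_bounds}, which the statement lets you cite. Your Ville argument for $\cE_\rho^+$ is also fine. The gap is in your own derivation of the uniform lower scan, at the step that moves from the block's left endpoint $a_j$ to a later start $a=k+1$ in the same block. Since $\sum_{t=a}^{n-1}d\zeta_t/T_t=\sum_{t=a_j}^{n-1}d\zeta_t/T_t-\sum_{t=a_j}^{a-1}d\zeta_t/T_t$, a lower bound from start $a$ needs an \emph{upper} bound on the discarded prefix. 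Nonnegativity of $d\zeta_t$ only lets you pass from a later start to an earlier one, so it points the wrong way here. The prefix has mean mass $\sum_{t=a_j}^{a-1}1/T_t$ up to about $\log 2$. The available slack is $\tfrac12\sum_{t=a}^{n-1}1/T_t+C\Gamma_T(\delta)/T_0$, which can be far smaller: take $a$ near the right end of the block and $n$ just beyond the trivial range $\sum_{t=a}^{n-1}1/T_t\le 2C\Gamma_T(\delta)/T_0$. A multiplicative slack $\varepsilon$ at the reference only trades this for a loss of order $\varepsilon\log 2+r/(\varepsilon T_{a_j})$. Its minimum over $\varepsilon$ is of order $\sqrt{r/T_{a_j}}$, not $O(\Gamma_T(\delta)/T_0)$. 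Separately, $\lambda_j=2^jT_0$ needs a small constant factor so that $c\lambda_j\sum T_t^{-2}$ stays below the half-mean slack.

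This is not just bookkeeping: your scheme pays only $\Gamma':=1+\log(1/\delta)+\log J_T$, and a scan with additive error $C\Gamma'/T_0$ is false for large $T_0$. Take $T=T_0$, so the first annulus contains about $T_0/m$ disjoint windows of length $m=\lceil 6C\Gamma'\rceil$. The $\zeta_t$ are i.i.d.\ $\mathrm{Beta}(1/2,(d-1)/2)$ (cf.\ Lemma~\ref{lem:future_angle_innovations}). Hence each window independently has $d\zeta_t<1/8$ throughout with probability at least $p_0^m$ for a universal $p_0>0$, and any such window violates the scan. As $T_0\to\infty$ some window does so with probability tending to one. The $\log(e+T_0)$ in $\Gamma_T(\delta)$ must therefore come from a union bound over essentially all starting indices.

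The repair is to run your lower-tail supermartingale from every start $a$, with $\lambda_a=T_a/(4c)$ and $\delta_a\propto\delta T_0/T_a^2$; these levels are summable by Lemma~\ref{lem:T_sum}. Ville then gives, for all ends $n$ at once, $\sum_{t=a}^{n-1}(d\zeta_t-1)/T_t\ge-\tfrac14\sum_{t=a}^{n-1}1/T_t-4c\log(1/\delta_a)/T_a$. Because $T_a\ge T_0$, the error satisfies $\log(1/\delta_a)/T_a\lesssim\bigl(\log(1/\delta)+\log(T_a^2/T_0)\bigr)/T_a\le C\bigl(\log(1/\delta)+\log(e+T_0)\bigr)/T_0$. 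The paper pays the same per-start cost (the factor $N_j+1\approx 2^jT_0$ in $\delta_{j,a,\ell}$). It organizes this through annuli and endpoint-mass blocks because it optimizes $\lambda$ blockwise; your fixed choice $\lambda_a\asymp T_a$ would cover all endpoints, across annuli, in one Ville step.
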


Lemma~\ref{lem:cE_rho} lower-bounds the weighted angle sums induced by the
schedule \(\eta_t=\Theta(1/(t+T_0))\)
\citep{bottou2018optimization,liu2023revisiting}. These bounds yield the
following unrolled recursion for an arbitrary terminal time \(K\).

\begin{lemma}
\label{lem:dec_2}
Suppose that the hypotheses of Lemma~\ref{lem:dec_1} hold along the trajectory
of Algorithm~\ref{alg:SA_1}. Set
\(\eta_t=4d/[\mu(t+T_0)\norm{\bu_t}^2]\) with
\(T_0=32dL/\mu\), and write \(T_t:=t+T_0\). Suppose that
\(\cE_{\rho_K}\) and \(\cE_{\rho_{K,k}}\), \(k=0,\dots,K-1\), hold.
Define the actual suffix product
\[
	\Pi_{K,k}
	:=
	\prod_{t=k+1}^{K-1}
	\left(
	1-\frac{2d}{T_t}\zeta_t
	\right),
	\qquad k=0,\dots,K-1,
\]
with the empty product equal to \(1\).
Then
\begin{equation}\label{eq:dec_2}
\begin{aligned}
\Delta_K
\leq&
\rho_K \cdot \Delta_0
+ \sum_{k=0}^{K-1} \Pi_{K,k} \left( \frac{4d \cdot \dotprod{\nabla f(\bx_k), \bu_k\bu_k^\top \be_k}}{\mu (k+T_0) \norm{\bu_k}^2} \right)
\\
&+
\sum_{k=0}^{K-1} \rho_{K,k} \left(\frac{32Ld^2 }{\mu^2 (k+T_0)^2} \cdot \frac{|\bu_k^\top \be_k|^2}{\norm{\bu_k}^2}
\right)
\\
&+
\sum_{k=0}^{K-1} \rho_{K,k} \left( \frac{ d L^2\alpha^2 \norm{\bu_k}^2}{2\mu(k+T_0)}
+
\frac{ 4 d^2 L^3\alpha^2 \norm{\bu_k}^2}{ \mu^2 (k+T_0)^2 } \right).
\end{aligned}
\end{equation}
\end{lemma}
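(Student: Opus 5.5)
The plan is to substitute the stepsize \(\eta_t=4d/[\mu T_t\|\bu_t\|^2]\) into the one-step inequality of Lemma~\ref{lem:dec_1}, rewrite each term in closed form, and unroll the resulting linear recursion by induction on \(K\). The final step replaces suffix products by the constants \(\rho_K,\rho_{K,k}\), but only where the multiplied terms are nonnegative. First I check the stability hypothesis of Lemma~\ref{lem:dec_1}: \(\eta_t\le 1/(8L\|\bu_t\|^2)\) is equivalent to \(4d/(\mu T_t)\le 1/(8L)\), that is, \(T_t\ge 32dL/\mu=T_0\), which holds for every \(t\ge0\). (On the null event \(\bu_t=0\) the iterate does not move, and the inequality is trivial.)

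Next I compute each term of Eq.~\eqref{eq:dec_1}. The contraction coefficient is
\[
\frac{\mu\eta_t}{2}\frac{(\bu_t^\top\nabla f(\bx_t))^2}{\|\nabla f(\bx_t)\|^2}
=\frac{2d}{T_t}\zeta_t ,
\]
using the definition of \(\zeta_t\) on the probability-one event of Proposition~\ref{prop:nonvanishing_gradients}. Hence the factor is \(a_t:=1-2d\zeta_t/T_t\). Since \(\zeta_t\in[0,1]\) and \(2d/T_t\le 2d/T_0=\mu/(16L)\le 1/16\), we get \(a_t\in[15/16,1]\); in particular \(a_t\ge0\). The linear term equals \(4d\langle\nabla f(\bx_t),\bu_t\bu_t^\top\be_t\rangle/(\mu T_t\|\bu_t\|^2)\). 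The quadratic term equals
\[
2L\cdot\frac{16d^2}{\mu^2T_t^2\|\bu_t\|^4}\,\|\bu_t\|^2|\bu_t^\top\be_t|^2
=\frac{32Ld^2}{\mu^2T_t^2}\cdot\frac{|\bu_t^\top\be_t|^2}{\|\bu_t\|^2}.
\]
For the bias, I use \(\beta_t^2\le L^2\alpha^2\|\bu_t\|^4/4\). This gives \(\eta_t\beta_t^2/2\le dL^2\alpha^2\|\bu_t\|^2/(2\mu T_t)\) and \(L\eta_t^2\beta_t^2\|\bu_t\|^2\le 4d^2L^3\alpha^2\|\bu_t\|^2/(\mu^2T_t^2)\). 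These are exactly the summands appearing in Eq.~\eqref{eq:dec_2}.

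Now write the recursion as \(\Delta_{t+1}\le a_t\Delta_t+\ell_t+q_t+b_t\), with \(\ell_t\) the signed linear term and \(q_t,b_t\ge0\). Because every \(a_t\ge0\), multiplying an inequality by \(a_t\) preserves its direction. A straightforward induction on \(K\) then gives
\[
\Delta_K\le\Big(\prod_{t=0}^{K-1}a_t\Big)\Delta_0+\sum_{k=0}^{K-1}\Pi_{K,k}(\ell_k+q_k+b_k),
\]
where the coefficient of the time-\(k\) input is precisely \(\prod_{t=k+1}^{K-1}a_t=\Pi_{K,k}\). Finally I use \(1-x\le e^{-x}\) together with \(a_t\ge 0\). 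This yields \(0\le\prod_{t=0}^{K-1}a_t\le\exp(-2d\sum_{t<K}\zeta_t/T_t)\le\rho_K\) on \(\cE_{\rho_K}\), and \(0\le\Pi_{K,k}\le\rho_{K,k}\) on \(\cE_{\rho_{K,k}}\). Since \(\Delta_0,q_k,b_k\ge0\), these products may be replaced by \(\rho_K\) and \(\rho_{K,k}\). The signed term \(\Pi_{K,k}\ell_k\) is left untouched, matching the statement.

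The only delicate point is the sign bookkeeping. The replacement of products by \(\rho\)-constants is legitimate only for nonnegative multiplicands, and the induction itself requires \(a_t\ge0\). This is why the check \(2d/T_t<1\), which follows from \(T_0=32dL/\mu\) and \(L\ge\mu\), must come first. It is also why the linear term must keep the realized product \(\Pi_{K,k}\) rather than an envelope.
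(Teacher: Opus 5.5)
Your proposal is correct and follows the paper's proof essentially step for step. You substitute the stepsize into Lemma~\ref{lem:dec_1} to obtain the factor \(1-2d\zeta_t/T_t\) together with the explicit linear, quadratic, and bias terms, unroll with the suffix products \(\Pi_{K,k}\), and then use \(T_0\ge 32d\), \(1-x\le e^{-x}\), and the events \(\cE_{\rho_K},\cE_{\rho_{K,k}}\) to replace the products by \(\rho_K,\rho_{K,k}\) only in the nonnegative terms. Your observation that nonnegativity of the factors is already needed for the unrolling induction itself, whereas the paper invokes it only at the replacement step, slightly tightens the paper's bookkeeping.
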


The following subsections control the terms on the right-hand side of
Eq.~\eqref{eq:dec_2}.

\subsection{Two Consequences of Conditional Sub-Gaussian Noise}

We require two consequences of Assumption~\ref{ass:SubG}, corresponding to the
linear and quadratic stochastic-error terms. They replace the uses of an
almost-sure noise bound in a bounded-noise analysis; in particular, no event of
the form \(\max_t\|\be_t\|\le R\) is introduced.

\begin{lemma}[Conditional sub-Gaussian projection]
\label{lem:subg_projection}
	Let \(\mathcal F\) be a \(\sigma\)-algebra. Suppose that a random vector
	\(\be\in\mathbb R^d\) satisfies
	\[
		\EE[\be\mid\mathcal F]=0,
		\qquad
		\EE\left[
		\exp\left(\lambda\|\be\|^2\right)
		\middle|\mathcal F
		\right]
		\le
		\exp(\lambda\sigma^2),
		\qquad
		0<\lambda<\sigma^{-2}.
	\]
	Then there is a universal numerical constant \(C_{\mathrm{sg}}\ge1\) such
	that, for every \(\mathcal F\)-measurable vector \(z\in\mathbb R^d\) and
	every \(\theta\in\mathbb R\),
	\begin{equation}
	\label{eq:subg_projection_mgf}
		\EE\left[
		\exp\left(\theta\langle \be,z\rangle\right)
		\middle|\mathcal F
		\right]
		\le
		\exp\left(C_{\mathrm{sg}}\theta^2\sigma^2\|z\|^2\right).
	\end{equation}
\end{lemma}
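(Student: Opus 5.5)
Write \(X:=\langle\be,z\rangle\). Since \(z\) is \(\mathcal F\)-measurable, we can work conditionally on \(\mathcal F\) throughout and treat \(z\) as a fixed vector. If \(z=0\), there is nothing to prove. Otherwise Cauchy--Schwarz gives \(|X|\le\|\be\|\,\|z\|\). The hypothesis with \(\lambda=1/(2\sigma^2)\) then yields
\[
\EE\!\left[\exp\!\left(\frac{X^2}{2\sigma^2\|z\|^2}\right)\middle|\mathcal F\right]\le\EE\!\left[\exp\!\left(\frac{\|\be\|^2}{2\sigma^2}\right)\middle|\mathcal F\right]\le e^{1/2}.
\]
Together with \(\EE[X\mid\mathcal F]=\langle\EE[\be\mid\mathcal F],z\rangle=0\), this is the standard \(\psi_2\)-type condition for a mean-zero scalar. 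The plan is to convert it into the MGF bound \eqref{eq:subg_projection_mgf}, using the standard argument.

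\textbf{Small \(\theta\).} Set \(s^2:=\sigma^2\|z\|^2\) and use \(e^{y}\le 1+y+\frac{y^2}{2}e^{|y|}\). Taking conditional expectation kills the linear term, so
\[
\EE[e^{\theta X}\mid\mathcal F]\le 1+\frac{\theta^2}{2}\EE\bigl[X^2e^{|\theta X|}\mid\mathcal F\bigr].
\]
Next apply \(|\theta X|\le \frac{X^2}{4s^2}+\theta^2s^2\) and \(X^2\le 4s^2 e^{X^2/(4s^2)}\). These give
\[
X^2e^{|\theta X|}\le 4s^2e^{\theta^2s^2}e^{X^2/(2s^2)},
\]
and by the displayed bound its conditional expectation is at most \(4e^{1/2}s^2e^{\theta^2s^2}\). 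For \(\theta^2s^2\le1\), this gives \(\EE[e^{\theta X}\mid\mathcal F]\le 1+c\,\theta^2s^2\le \exp(c\,\theta^2 s^2)\) with a numerical \(c\).

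\textbf{Large \(\theta\).} When \(\theta^2s^2>1\), use \(\theta X\le \theta^2s^2+\frac{X^2}{4s^2}\). Jensen and the displayed bound then give
\[
\EE[e^{\theta X}\mid\mathcal F]\le e^{\theta^2s^2}\,e^{1/4}\le e^{2\theta^2s^2},
\]
where the last step uses \(\theta^2s^2>1\). Taking \(C_{\mathrm{sg}}:=\max\{c,2,1\}\) covers both regimes.

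There is no real obstacle here. The only point needing care is the measurability: the constant must be universal, and the bound must hold for an \(\mathcal F\)-measurable rather than deterministic \(z\). Both follow from applying the scalar argument under the regular conditional distribution given \(\mathcal F\), or directly from conditional Jensen and pull-out properties, since \(z\) and \(s\) are \(\mathcal F\)-measurable. On the event \(\{z=0\}\), the inequality is trivial.
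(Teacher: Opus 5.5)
Your proof is correct. Its first step is the paper's: Cauchy--Schwarz with \(\lambda=1/(2\sigma^2)\) gives \(\EE[\exp(X^2/(2\sigma^2\|z\|^2))\mid\mathcal F]\le e^{1/2}\), and together with \(\EE[X\mid\mathcal F]=0\) this is a conditional \(\psi_2\) condition.

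You differ in how you turn this into the mgf bound. The paper goes through conditional moment bounds \(\EE[|X|^m\mid\mathcal F]\le Ca^m m^{m/2}\). It then asserts that summing the Taylor series of \(\EE[e^{\theta X}\mid\mathcal F]\) gives \(\exp(C\theta^2a^2)\), and defers the details to a lemma of Liu et al. You instead split into two regimes:
\begin{enumerate}
\item For \(\theta^2s^2\le1\), you use the Taylor remainder \(e^y\le1+y+\tfrac{y^2}{2}e^{|y|}\), then Young's inequality and \(y\le e^y\) to reduce everything to the \(\psi_2\) bound.
\item For \(\theta^2s^2>1\), you use Young's inequality together with conditional Jensen.
\end{enumerate}

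Your route has three advantages. It is fully self-contained and gives the explicit constant \(C_{\mathrm{sg}}=2e^{3/2}\). It avoids the Stirling-type estimates needed to sum the moment series uniformly in \(\theta\), which is exactly the step the paper leaves implicit. It also handles the \(\mathcal F\)-measurable \(z\) cleanly, because the regime split \(\{\theta^2\sigma^2\|z\|^2\le1\}\) is itself \(\mathcal F\)-measurable and both bounds can be combined pointwise. The paper's route fits the standard chain of equivalent sub-Gaussian characterizations, so it can be quoted directly from the literature.
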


Lemma~\ref{lem:subg_projection} provides the scalar conditional
sub-Gaussian estimate needed to control the signed linear stochastic error by a
product-martingale argument.

\begin{lemma}[Random-direction quadratic sub-exponential bound]
\label{lem:subg_random_direction_square}
	Let \(\mathcal F\) be a \(\sigma\)-algebra, and suppose that
	\(\be\in\mathbb R^d\) satisfies the conditional mean-zero and
	exponential-moment conditions in Lemma~\ref{lem:subg_projection}. Let
	\(\bu\sim\mathcal N(0,I_d)\) be independent of \(\mathcal F\vee\sigma(\be)\). In applications,
	\(\mathcal F\) is the pre-direction \(\sigma\)-field, so conditioning is
	performed before \(\bu\) is drawn and the Gaussian direction can be
	integrated out. Then, for every
	\(0<\lambda\le 1/(2\sigma^2)\),
	\begin{equation}
	\label{eq:subg_random_direction_square_mgf}
		\EE\left[
		\exp\left(
			\lambda
			\frac{(\bu^\top\be)^2}{\|\bu\|^2}
		\right)
		\middle|\mathcal F
		\right]
		\le
		\exp\left(\frac{2\lambda\sigma^2}{d}\right).
	\end{equation}
	Consequently, the following sequential version holds. Suppose that, at
	times \(k=0,\dots,K-1\), \(\be_k\) satisfies the same conditional
	mean-zero and exponential-moment bounds with respect to the pre-direction
	history, and
	\(\bu_k\sim\mathcal N(0,I_d)\) is conditionally independent of that
	history and of \(\be_k\). Then, for any nonnegative deterministic weights
	\(w_0,\dots,w_{K-1}\), with \(W=\sum_{k=0}^{K-1}w_k\) and
	\(w_{\max}=\max_{0\le k\le K-1}w_k\), the following inequality holds
	with probability at least \(1-\delta\):
	\begin{equation}
	\label{eq:subg_weighted_quadratic}
		\sum_{k=0}^{K-1}
		w_k\frac{(\bu_k^\top\be_k)^2}{\|\bu_k\|^2}
		\le
		\frac{2\sigma^2}{d}W
		+
		2\sigma^2 w_{\max}\log\frac1\delta.
	\end{equation}
\end{lemma}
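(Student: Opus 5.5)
The plan is to prove the one-step MGF bound first, by conditioning on \(\be\) and integrating out the direction, and then to get the weighted sum by a supermartingale plus Markov argument.

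\textbf{Step 1: integrate out the direction.} Condition on \(\mathcal F\vee\sigma(\be)\). Since \(\bu\) is an independent standard Gaussian, \(\bu/\|\bu\|\) is uniform on the sphere, so
\[
\frac{(\bu^\top\be)^2}{\|\bu\|^2}\stackrel{d}{=}\|\be\|^2\zeta,\qquad \zeta\sim\mathrm{Beta}(1/2,(d-1)/2),
\]
with \(\zeta\) independent of \(\mathcal F\vee\sigma(\be)\).

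\textbf{Step 2: an MGF bound for \(\zeta\).} I will show that for \(0\le s\le d/2\),
\[
\EE e^{s\zeta}\le 1+\frac{2s}{d}.
\]
The moments of the Beta law are
\[
\EE\zeta^m=\prod_{j=0}^{m-1}\frac{1/2+j}{d/2+j}\le \frac1d\left(\frac{2}{d}\right)^{m-1}\prod_{j=1}^{m-1}\frac{(1/2+j)}{1}\cdot\frac{1}{?}.
\]
The cleaner route is \(\EE\zeta^m\le \frac{(2m-1)!!}{d^m}\), comparing with the chi-square moments of \(g_1^2/d\); the constants need care here. Then
\[
\EE e^{s\zeta}-1=\sum_{m\ge1}\frac{s^m\EE\zeta^m}{m!}\le \frac{s}{d}\sum_{m\ge1}\frac{(2m-1)!!}{m!}\left(\frac{s}{d}\right)^{m-1}.
\]
Using \((2m-1)!!/m!\le 2^{m-1}\), the sum is a geometric series \(\sum_m (2s/d)^{m-1}\). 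This gives \(\EE e^{s\zeta}\le 1+\frac{s}{d}\cdot\frac{1}{1-2s/d}\). Applied to \(s\le d/4\), this yields \(\le 1+2s/d\).

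\textbf{Step 3: the main obstacle.} The difficulty is that \(s=\lambda\|\be\|^2\) can exceed \(d/4\), because \(\be\) is unbounded. To handle this I will use convexity instead of the series. The map \(\zeta\mapsto e^{s\zeta}\) is convex on \([0,1]\), so \(e^{s\zeta}\le 1+\zeta(e^{s}-1)\), and hence
\[
\EE_\zeta e^{s\zeta}\le 1+\frac{e^{s}-1}{d}\le \exp\!\left(\frac{e^{s}-1}{d}\right).
\]
Taking the expectation over \(\be\) requires \(\EE[e^{\lambda\|\be\|^2}]\), which is at most \(e^{\lambda\sigma^2}\) by the assumption. This gives \(\EE\exp(\lambda(\bu^\top\be)^2/\|\bu\|^2\mid\mathcal F)\le 1+(e^{\lambda\sigma^2}-1)/d\). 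For \(\lambda\sigma^2\le 1/2\) we have \(e^{x}-1\le 2x\), so the bound is at most \(1+2\lambda\sigma^2/d\le e^{2\lambda\sigma^2/d}\), which is Eq.~\eqref{eq:subg_random_direction_square_mgf}. Jensen with the convex chord bound is the key trick, and it avoids any Beta-moment computation.

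\textbf{Step 4: the sequential bound.} Let \(X_k=(\bu_k^\top\be_k)^2/\|\bu_k\|^2\) and set \(\lambda=1/(2\sigma^2w_{\max})\). Then \(\lambda w_k\le 1/(2\sigma^2)\) for every \(k\). Step 3, applied with \(\lambda w_k\) conditionally on the pre-direction history, gives
\[
\EE\!\left[e^{\lambda w_kX_k}\mid\mathcal F_k^{-}\right]\le e^{2\lambda w_k\sigma^2/d}.
\]
Therefore \(M_j=\exp\!\left(\lambda\sum_{k<j}w_kX_k-\frac{2\lambda\sigma^2}{d}\sum_{k<j}w_k\right)\) is a supermartingale with \(\EE M_K\le1\), using the tower property with \(X_k\) measurable before time \(k+1\). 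Markov's inequality gives
\[
\PP\!\left(\lambda\sum_k w_kX_k-\frac{2\lambda\sigma^2}{d}W\ge\log\frac1\delta\right)\le\delta.
\]
Dividing by \(\lambda\) gives Eq.~\eqref{eq:subg_weighted_quadratic}.
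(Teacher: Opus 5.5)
Your argument is correct and essentially the paper's proof, once you discard Step 2 (unneeded, and its moment display is left unfinished). What remains matches the paper step for step: the chord bound \(e^{s\zeta}\le 1+\zeta(e^{s}-1)\) with \(\EE\zeta=1/d\); averaging the resulting bound, which is linear in \(e^{\lambda\|\be\|^2}\), against the exponential-moment hypothesis; \(e^{x}-1\le 2x\) for \(0\le x\le 1/2\); and finally the product supermartingale with \(\lambda=1/(2\sigma^2 w_{\max})\) plus Markov, where only the trivial case \(w_{\max}=0\) (handled separately in the paper) is left unmentioned.
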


Lemma~\ref{lem:subg_random_direction_square} is used for the quadratic-noise
term in Eq.~\eqref{eq:dec_2}; it integrates over the Gaussian direction
directly and avoids an almost-sure bound on \(\|\be_k\|\).

\subsection{Linear Martingale Bound}

We next control the signed suffix-product linear term. Crucially, the realized
product \(\Pi_{K,k}\) is retained through the martingale concentration step;
the deterministic comparison \(\Pi_{K,k}\le\rho_{K,k}\) is used only when
bounding the resulting variance proxy. For a fixed terminal time \(K\), define
\begin{equation}
\label{eq:T_Y}
T_k:=k+T_0,
\qquad
Y_{K,k}^{\Pi}
:=
\frac{\Pi_{K,k}}{T_k}
\cdot
\frac{\nabla f(\bx_k)^\top \bu_k \, \bu_k^\top \be_k}{\|\bu_k\|^2},
\qquad k=0,\dots,K-1,
\end{equation}
where \(\be_k\) is defined in Eq.~\eqref{eq:def}.

\begin{lemma}[Future angle innovations are conditionally independent of current noise]
\label{lem:future_angle_innovations}
	Fix \(K\) and \(k<K\). Let \(\mathcal F_k^+\) be the post-update history
	defined in Proposition~\ref{prop:algorithm_conditional_subg}.
	Conditional on \(\mathcal F_k^+\), the future angle variables
	\[
		\zeta_t
		=
		\frac{(\bu_t^\top\nabla f(\bx_t))^2}
		{\|\bu_t\|^2\|\nabla f(\bx_t)\|^2},
		\qquad t=k+1,\dots,K-1,
	\]
	have the product law
	\[
		\nu^{\otimes(K-k-1)},\qquad
		\nu=\mathrm{Beta}\left(\frac12,\frac{d-1}{2}\right),
	\]
	where \(\nu^{\otimes(K-k-1)}\) denotes the \((K-k-1)\)-fold product
	measure, equivalently, the law of \(K-k-1\) conditionally independent
	random variables with common distribution \(\nu\).
	Although the future iterates depend on the realized value of \(\xi_k\)
	through \(\bx_{k+1}\), this conditional angle law is deterministic. More
	precisely, with \(\mathcal F_k^{u}\) as defined in
	Proposition~\ref{prop:algorithm_conditional_subg},
	\(\sigma(\zeta_{k+1},\dots,\zeta_{K-1})\) is conditionally independent of
	\(\xi_k\), and hence of \(\be_k\), given \(\mathcal F_k^{u}\). Therefore,
	before sampling \(\xi_k\), one may enlarge the conditioning
	\(\sigma\)-field by
	\(\zeta_{k+1},\dots,\zeta_{K-1}\) without changing the conditional
	distributional bounds for \(\be_k\).
\end{lemma}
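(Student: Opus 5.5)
We argue by forward induction over \(t=k+1,\dots,K-1\), using the rotational invariance of the Gaussian direction law together with the fresh-sampling structure recorded in Proposition~\ref{prop:algorithm_conditional_subg}. The core observation is the following. Fix \(t\ge k+1\) and condition on \(\mathcal F_t^{-}\). The normalized gradient \(v_t:=\nabla f(\bx_t)/\|\nabla f(\bx_t)\|\) is \(\mathcal F_t^{-}\)-measurable; it is well defined almost surely by Proposition~\ref{prop:nonvanishing_gradients}. The direction \(\bu_t\sim\mathcal N(0,I_d)\) is independent of \(\mathcal F_t^{-}\). Choose an \(\mathcal F_t^{-}\)-measurable orthogonal matrix \(Q_t\) with \(Q_tv_t=\be_1\); a measurable Householder reflection will do. Then \(Q_t\bu_t\) is again standard Gaussian and independent of \(\mathcal F_t^{-}\), and
\[
\zeta_t=\frac{(\be_1^\top Q_t\bu_t)^2}{\|Q_t\bu_t\|^2}.
\]
Hence, for every bounded measurable \(h\),
\[
\EE[h(\zeta_t)\mid\mathcal F_t^{-}]=\int h\,d\nu,
\qquad \nu=\mathrm{Beta}\left(\tfrac12,\tfrac{d-1}{2}\right).
\]
This is a constant, and the identification of the law is the standard fact that \(g_1^2/\|g\|^2\) is \(\mathrm{Beta}(1/2,(d-1)/2)\) for standard Gaussian \(g\).

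For the product law, fix bounded measurable \(h_{k+1},\dots,h_{K-1}\) and peel off one factor at a time, starting from the last. The variables \(\zeta_{k+1},\dots,\zeta_{K-2}\) are \(\mathcal F_{K-1}^{-}\)-measurable, and \(\mathcal F_k^{+}\subseteq\mathcal F_{K-1}^{-}\). The tower property together with the core observation therefore gives
\[
\EE\Big[\prod_{t=k+1}^{K-1}h_t(\zeta_t)\,\Big|\,\mathcal F_k^{+}\Big]
=\Big(\int h_{K-1}\,d\nu\Big)\,
\EE\Big[\prod_{t=k+1}^{K-2}h_t(\zeta_t)\,\Big|\,\mathcal F_k^{+}\Big].
\]
Iterating down to \(t=k+1\), which requires \(\mathcal F_k^{+}\subseteq\mathcal F_{k+1}^{-}\), yields \(\prod_t\int h_t\,d\nu\). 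This is exactly the product law \(\nu^{\otimes(K-k-1)}\), and it does not depend on the realized history. The same computation goes through with \(\mathcal F_k^{+}\) replaced by \(\mathcal F_k^{u}\vee\sigma(\xi_k)\); indeed, this \(\sigma\)-field is the post-update history \(\mathcal F_k^{+}\) itself, because \(\bx_{k+1}\) is a function of \(\mathcal F_k^u\) and \(\xi_k\).

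Conditional independence follows from this. Let \(Z=(\zeta_{k+1},\dots,\zeta_{K-1})\). For bounded \(\phi\) and bounded \(\psi\) we compute
\[
\EE[\phi(Z)\psi(\xi_k)\mid\mathcal F_k^{u}]
=\EE\big[\psi(\xi_k)\,\EE[\phi(Z)\mid\mathcal F_k^{+}]\,\big|\,\mathcal F_k^{u}\big]
=\nu^{\otimes}(\phi)\,\EE[\psi(\xi_k)\mid\mathcal F_k^u].
\]
Taking \(\psi\equiv1\) gives \(\EE[\phi(Z)\mid\mathcal F_k^u]=\nu^{\otimes}(\phi)\), and the factorization follows. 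Since \(\be_k\) is a measurable function of \((\bx_k,\xi_k)\), with \(\bx_k\) being \(\mathcal F_k^u\)-measurable, \(Z\) is also conditionally independent of \(\be_k\) given \(\mathcal F_k^u\). It follows that \(\EE[\,\cdot\,(\be_k)\mid\mathcal F_k^u\vee\sigma(Z)]=\EE[\,\cdot\,(\be_k)\mid\mathcal F_k^u]\) for any bounded or nonnegative functional. This preserves both the mean-zero identity and the exponential-moment bound of Proposition~\ref{prop:algorithm_conditional_subg}, where the mean-zero identity is extended from bounded functionals using integrability from the exponential moment.

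The main subtlety is the second step. The future iterates \(\bx_t\) do depend on \(\xi_k\), so a naive independence claim would be false. The argument works only because each conditional law of \(\zeta_t\) given \(\mathcal F_t^-\) is a constant, so the dependence is washed out; the measurable choice of \(Q_t\) and the use of the almost-sure nonvanishing gradient are the technical points to state carefully.
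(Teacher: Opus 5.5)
Your proposal is correct and follows the paper's proof essentially step for step: each $\zeta_t$ has the deterministic conditional law $\mathrm{Beta}(1/2,(d-1)/2)$ given $\mathcal F_t^{-}$, and backward tower-property peeling from $t=K-1$ down to $k+1$ gives the product law given $\mathcal F_k^{+}$. The same tower computation through $\mathcal F_k^{+}$ then factors $\EE[\phi(Z)\psi(\xi_k)\mid\mathcal F_k^{u}]$, and the result transfers to $\be_k$ because $\be_k$ is a measurable function of $(\bx_k,\xi_k)$. The differences are cosmetic. The paper cites the monotone-class theorem explicitly to pass from product test functions to the joint law, which you use implicitly. Your opening description ``forward induction'' should read backward peeling, which is what your argument actually does.
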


Lemma~\ref{lem:future_angle_innovations} justifies the enlarged filtration in
the product-martingale argument: future angle variables may be revealed
without changing the current noise law.

\paragraph{Product-linear event and variance proxy.}
For \(1\le K\le T\), let \(Y_{K,k}^{\Pi}\) be defined in
Eq.~\eqref{eq:T_Y}. Given a deterministic envelope \(\bar A_K>0\) and an
auxiliary confidence parameter \(\delta_K^{(Y)}\in(0,1)\), define
\begin{equation}
\label{eq:cE_Y}
\cE_{Y_K}^{\Pi}(\bar A_K)
:=
\left\{
	\sum_{k=0}^{K-1} Y_{K,k}^{\Pi}
	\le
	C_Y\sigma
	\sqrt{\bar A_K\log\frac{1}{\delta_K^{(Y)}}}
\right\},
\end{equation}
where \(C_Y\ge1\) is a universal numerical constant fixed large enough in the
time-uniform product-martingale argument below. Also define
\begin{equation}\label{eq:A_k_subg}
	A_K^{\Pi}
	:=
	\sum_{k=0}^{K-1}
	\frac{\Pi_{K,k}^2}{T_k^2}
	\frac{(\nabla f(\bx_k)^\top \bu_k)^2}{\|\bu_k\|^2}.
\end{equation}
In the application to the induction proof, we compare this variance proxy with
the deterministic envelope
\begin{equation}
\label{eq:Abar_product_linear}
	\bar A_K
	:=
	C_A
	\frac{L\cC c_\rho^2\Theta_T\Lambda}{T_K^2}
	\left(1+\frac{c_\delta}{T_0}\right),
	\qquad 1\le K\le T,
\end{equation}
where \(C_A\) is a sufficiently large universal numerical constant. The
product-linear lemma below gives the time-uniform high-probability bound for
the signed product-linear term and is stated for arbitrary deterministic
envelopes; Eq.~\eqref{eq:Abar_product_linear} is the choice used when the
bound is assembled into the time-uniform induction event. The lemma is
conditional on the product-envelope event \(\mathcal P_T(C_P)\);
Lemma~\ref{lem:time_uniform_controls} verifies this envelope from the weighted
scan event.

\begin{lemma}[Time-uniform signed product-linear bound]
\label{lem:product_linear_uniform}
	Let the assumptions of Theorem~\ref{thm:main_1} hold. Define
	\[
		q_t:=1-\frac{2d}{T_t}\zeta_t,\qquad
		P_0:=1,\qquad
		P_K:=\prod_{t=0}^{K-1}q_t,
	\]
	and, for a numerical constant \(C_P\), let
	\[
		\mathcal P_T(C_P)
		:=
		\left\{
		P_K^{-2}\le (T+T_0)^{C_P},\quad K=1,\dots,T
		\right\}.
	\]
	Let \(\bar A_K>0\), \(K=1,\dots,T\), be deterministic envelopes and set
	\[
		U_T:=(T+T_0)^{C_P}\max_{K\le T}\bar A_K.
	\]
	Let \(0<\delta_{\rm lin}<1\) and let
	\(G\ge1+\log(1/\delta_{\rm lin})\) satisfy
	\[
		\max_{K\le T}
		\log\left(1+\log\left(e+\frac{U_T}{\bar A_K}\right)\right)
		\le C\,G.
	\]
	Then, for a universal numerical constant \(C_Y\),
	\begin{align}
	\label{eq:product_linear_uniform_bound}
	\Pr\Bigg(
		\mathcal P_T(C_P)
		\cap
		\Bigg\{
		\exists K\le T:\,
		A_K^\Pi\le\bar A_K,\
		\sum_{k=0}^{K-1}Y_{K,k}^{\Pi}
		>
		C_Y\sigma\sqrt{\bar A_K\,G}
		\Bigg\}
	\Bigg)
	\le \delta_{\rm lin}.
	\end{align}
	Equivalently, on any event implying \(\mathcal P_T(C_P)\), outside an
	additional failure probability \(\delta_{\rm lin}\), the implication
	\[
		A_K^\Pi\le\bar A_K
		\quad\Longrightarrow\quad
		\sum_{k=0}^{K-1}Y_{K,k}^{\Pi}
		\le
		C_Y\sigma\sqrt{\bar A_K\,G}
	\]
	holds simultaneously for every \(K\le T\).
\end{lemma}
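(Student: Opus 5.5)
The plan is to turn the signed suffix-product sum into a genuine martingale by factoring out the prefix product. Since \(\Pi_{K,k}=P_K/P_{k+1}\) whenever the \(q_t\) are nonzero (and \(q_t\ge 1-2d/T_0>0\) because \(T_0=32dL/\mu\ge 32d\)), we write
\[
\sum_{k=0}^{K-1}Y^{\Pi}_{K,k}=P_K\sum_{k=0}^{K-1}\frac{1}{P_{k+1}T_k}\frac{\nabla f(\bx_k)^\top\bu_k\,\bu_k^\top\be_k}{\|\bu_k\|^2}=:P_K M_K ,
\qquad
A_K^\Pi=P_K^2 V_K ,
\]
where \(V_K:=\sum_{k<K}P_{k+1}^{-2}T_k^{-2}(\nabla f(\bx_k)^\top\bu_k)^2/\|\bu_k\|^2\). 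Here \(P_{k+1}\) involves \(\zeta_k\), which is \(\mathcal F_k^u\)-measurable. The \(k\)-th increment of \(M_K\) is therefore \(\langle \be_k,z_k\rangle\) with \(z_k:=\bu_k\bu_k^\top\nabla f(\bx_k)/(P_{k+1}T_k\|\bu_k\|^2)\), which is \(\mathcal F_k^u\)-measurable. Proposition~\ref{prop:algorithm_conditional_subg} and Lemma~\ref{lem:subg_projection} then give \(\EE[\exp(\theta\langle\be_k,z_k\rangle)\mid\mathcal F_k^u]\le\exp(C_{\rm sg}\theta^2\sigma^2\|z_k\|^2)\), and \(\|z_k\|^2\) is exactly the \(k\)-th summand of \(V_K\). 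Since \(M_K\) does not depend on \(K\) except through the number of terms, this is a single martingale, with no need for the angle-enlarged filtration. Lemma~\ref{lem:future_angle_innovations} would only be needed if one insisted on working with \(\Pi_{K,k}\) directly.

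The supermartingale \(\exp(\theta M_K-C_{\rm sg}\theta^2\sigma^2V_K)\) with respect to \((\mathcal F_K^-)\) then yields, for each fixed \(\theta\) and by Ville's maximal inequality, \(\Pr(\exists K:\theta M_K-C_{\rm sg}\theta^2\sigma^2 V_K>\log(1/\delta'))\le\delta'\). We want \(M_K\lesssim\sigma\sqrt{v\,G}\) on the event \(\{V_K\le v\}\), which suggests choosing \(\theta\approx\sqrt{G}/(\sigma\sqrt v)\). The threshold however must vary with \(K\): on \(\{A_K^\Pi\le\bar A_K\}\) we have \(V_K\le\bar A_K/P_K^2\), and \(P_K\) is random.

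The stitching step handles this. The relevant scale \(v\) ranges from \(\min_K\bar A_K\) up to \(U_T=(T+T_0)^{C_P}\max_K\bar A_K\), using \(P_K^{-2}\le (T+T_0)^{C_P}\) on \(\mathcal P_T(C_P)\). Take a geometric grid \(v_j=2^j v_{\min}\). There are \(N\lesssim\log(e+U_T/\min_K\bar A_K)\) grid points, so a union bound with \(\delta'=\delta_{\rm lin}/N\) costs \(\log N\le CG\) by hypothesis. For each \(K\), select the grid level \(j\) with \(v_{j-1}<\bar A_K/P_K^2\le v_j\). Since \(j\) depends on \(P_K\), which is random, the fixed-\(\theta\) Ville bound still applies because it is uniform in \(K\) and we union over all \(j\). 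On the good event,
\[
M_K\le \frac{\log(N/\delta_{\rm lin})}{\theta_j}+C_{\rm sg}\theta_j\sigma^2 v_j\lesssim\sigma\sqrt{v_jG}\le 2\sigma\sqrt{\bar A_K G}/P_K .
\]
Multiplying by \(P_K>0\) gives \(\sum_kY^\Pi_{K,k}\le C_Y\sigma\sqrt{\bar A_KG}\). Values of \(V_K\) below the grid's lower end are absorbed into the first level.

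The main obstacle is making this scale selection rigorous: the grid must be deterministic while the relevant scale \(\bar A_K/P_K^2\) is random and \(K\)-dependent. The envelope event \(\mathcal P_T(C_P)\) is needed precisely to cap the number of grid levels, which is what makes the \(\log\log\) term, controlled by \(CG\), appear. We also need to check that the sign and positivity of \(P_K\) hold deterministically, via \(q_t\in[1-1/16,1]\), so that multiplying the \(M_K\) bound by \(P_K\) preserves the inequality.
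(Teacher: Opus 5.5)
Your proof is correct. Its skeleton matches the paper's: write $\Pi_{K,k}=P_K/P_{k+1}$, control the single martingale $M_K$ (the paper's $\widetilde M_K$) via Lemma~\ref{lem:subg_projection} and Ville's inequality, stitch over scales using $\mathcal P_T(C_P)$ to cap the range, and multiply by $P_K>0$ at the end.

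Two steps differ genuinely from the paper.

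First, the paper runs the martingale on the angle-enlarged filtration $\mathcal F_k^{u}\vee\sigma(\zeta_{k+1},\dots,\zeta_{T-1})$, and so it must invoke Lemma~\ref{lem:future_angle_innovations}. You observe that $P_{k+1}$ involves only $\zeta_0,\dots,\zeta_k$ and is therefore $\mathcal F_k^{u}$-measurable. The plain post-direction filtration thus suffices, and that conditional-independence lemma is not needed for this statement.

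Second, the paper (Lemma~\ref{lem:stitched_product_mg}) peels on the random intrinsic variance $\widetilde V_K$. It uses an infinite dyadic grid below $U_\star$ with weights proportional to $(j+1)^{-2}$, which yields a variance-adaptive boundary. It then needs the transfer inequality \eqref{eq:variance_envelope_loglog_transfer} to replace $A_K$ by $\bar A_K$ inside the iterated logarithm. You instead peel directly on the random envelope scale $s_K=\bar A_K/P_K^2$. On $\mathcal P_T(C_P)$ this scale lies in the deterministic window $[\min_K\bar A_K,\,U_T]$, so a finite deterministic grid with a uniform union bound suffices. The hypothesis on $\max_K\log(1+\log(e+U_T/\bar A_K))$ is then used only to count grid levels, and no transfer step is needed. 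The random, $K$-dependent choice of level is legitimate because each level's Ville bound is uniform in $K$.

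What each route buys: the paper's gets a stronger intermediate result, a boundary adaptive to the realized variance packaged as a reusable general lemma. Yours is shorter and more elementary for the envelope-form conclusion the lemma actually asserts.

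Two small points. Your remark about values of $V_K$ below the grid is unnecessary, since only $V_K\le s_K\le v_j$ is used and $s_K$ never falls below $\min_K\bar A_K$. The union bound should run over the $N+1$ grid levels, a harmless constant change.
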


Lemma~\ref{lem:product_linear_uniform} is the concentration step for the
signed linear term in Eq.~\eqref{eq:dec_2}. It retains the true suffix product
\(\Pi_{K,k}\) until martingale cancellation has been exploited.

The following Gaussian-norm event converts the random normalization in the
algorithmic stepsize into a deterministic bound along the entire trajectory.

\begin{lemma}
\label{lem:E_u}
	Let \(\{\bu_t\}_{t=0}^{T-1}\) be generated by
	Algorithm~\ref{alg:SA_1}, and define
	\begin{equation}
		\cE_u
		:=
		\left\{
		\norm{\bu_t}^2\ge\frac d2
		\quad\text{for all }t=0,\dots,T-1
		\right\}.
		\label{eq:E_u}
	\end{equation}
If \(0<\delta^{(u)}<1\) and
\(d\ge16\log(T/\delta^{(u)})\), then \(\cE_u\) holds with probability at least
\(1-\delta^{(u)}\).
\end{lemma}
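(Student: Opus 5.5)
The proof is a one-sided chi-square lower-tail bound for each \(\norm{\bu_t}^2\), followed by a union bound over the \(T\) iterations. For a fixed \(t\), \(\norm{\bu_t}^2\sim\chi^2_d\), since \(\bu_t\sim\cN(0,\bm I_d)\) is drawn freshly at each iteration. Its law is unconditional, so no filtration argument is needed: the marginal law of each \(\bu_t\) is already standard Gaussian.

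We use the Laurent--Massart lower-tail inequality. For \(X\sim\chi^2_d\) and \(x>0\),
\[
\PP\left(X\le d-2\sqrt{dx}\right)\le e^{-x}.
\]
Choosing \(x=d/16\) gives \(d-2\sqrt{dx}=d-d/2=d/2\), and therefore
\[
\PP\left(\norm{\bu_t}^2<\frac d2\right)\le e^{-d/16}.
\]
This inequality can also be checked directly with a Chernoff bound, which makes the step self-contained. For \(\lambda>0\),
\[
\PP\left(X\le \frac d2\right)\le e^{\lambda d/2}(1+2\lambda)^{-d/2}.
\]
Taking \(\lambda=1/2\) gives the bound \(\exp\bigl(\tfrac d2(\tfrac12-\log 2)\bigr)\). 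Since \(\log2-\tfrac12\approx0.193\), the exponent is \(-0.0966\,d\), which is at most \(-d/16=-0.0625\,d\). So the Chernoff route yields the same estimate with room to spare.

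A union bound over \(t=0,\dots,T-1\) then gives
\[
\PP(\cE_u^c)\le T e^{-d/16}.
\]
The hypothesis \(d\ge16\log(T/\delta^{(u)})\) is equivalent to \(e^{-d/16}\le\delta^{(u)}/T\), so \(\PP(\cE_u^c)\le\delta^{(u)}\), as claimed.

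None of these steps presents a real obstacle. The only point requiring care is the constant in the chi-square lower tail, and the explicit Chernoff computation above settles it.
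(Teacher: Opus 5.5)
Your proof is correct and matches the paper's argument: a Laurent--Massart lower-tail bound for each \(\norm{\bu_t}^2\sim\chi^2_d\), followed by a union bound over the \(T\) iterations. The only difference is cosmetic. You fix \(x=d/16\) and use the dimension hypothesis at the end, whereas the paper takes \(x=\log(T/\delta^{(u)})\) and uses the hypothesis to show \(d-2\sqrt{d\log(T/\delta^{(u)})}\ge d/2\); your \(\lambda=1/2\) Chernoff check is a valid self-contained confirmation of the constant.
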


Lemma~\ref{lem:E_u} provides the simultaneous Gaussian-norm lower bound. The
same high-dimensional condition is also invoked in
Lemma~\ref{lem:time_uniform_controls} to simplify the prefix-scan estimate and
obtain a polynomial envelope for the product weights; the remaining stochastic
terms are handled by time-uniform or terminal concentration arguments.

The following lemma bounds the raw Gaussian projection in the
product-martingale variance proxy. It uses only the Gaussian search directions
and the smoothness of \(f\); no boundedness assumption on the stochastic noise
is needed.

\begin{lemma}
	\label{lem:V_up_explicit}
	Suppose that Assumption~\ref{ass:Lmu} holds, \(d\ge2\),
	\(\bx_0\ne\bx^*\), and that \(\{\bu_k\}\) are the fresh Gaussian directions
	generated by Algorithm~\ref{alg:SA_1}. Fix deterministic numbers
	\(\bar\Delta_k\ge0\), and let \(0<\delta_K^{(V)}<1\). Define
	\begin{equation}
		\label{eq:V_up_explicit}
		\begin{aligned}
		\cE_{V_K}:=\Bigg\{&
		\sum_{k=0}^{K-1}
		\left(
		\frac{|\rho_{K,k}|\,|\nabla f(\bx_k)^\top\bu_k|}{T_k}
		\cdot\sigma
		\right)^2
		\\
		&\le
		C_VL\sigma^2
		\left(
		\bar W_{K-1}
		+
		\sqrt{\bar Q_{K-1}\log\frac1{\delta_K^{(V)}}}
		+
		\bar M_{K-1}\log\frac1{\delta_K^{(V)}}
		\right)
		\Bigg\},
		\end{aligned}
	\end{equation}
	where
	\begin{align}
		\bar W_{K-1}
		&:=
		\sum_{k=0}^{K-1}\frac{\rho_{K,k}^2}{T_k^2}\bar\Delta_k,
		\qquad
		\bar Q_{K-1}
		:=
		\sum_{k=0}^{K-1}
		\left(
		\frac{\rho_{K,k}^2}{T_k^2}\bar\Delta_k
		\right)^2,
		\\
		\bar M_{K-1}
		&:=
		\max_{0\le k\le K-1}
		\frac{\rho_{K,k}^2}{T_k^2}\bar\Delta_k,
		\label{eq:Mbar_def}
	\end{align}
	and \(C_V\ge1\) is a universal numerical constant. On the probability-one
	event on which all gradients along the finite horizon are nonzero, define
	\[
		\bar w_k:=\frac{\rho_{K,k}^2}{T_k^2}\bar\Delta_k,
		\qquad
		z_k:=
		\frac{\bu_k^\top\nabla f(\bx_k)}{\|\nabla f(\bx_k)\|}.
	\]
	Let \(\widehat{\cE}_{V_K}\) denote the relaxed Laurent--Massart event
	\[
	\widehat{\cE}_{V_K}
	:=
	\Bigg\{
	\sum_{k=0}^{K-1}\bar w_kz_k^2
	\le
	\frac{C_V}{2}
	\left(
	\bar W_{K-1}
	+
	\sqrt{\bar Q_{K-1}\log\frac1{\delta_K^{(V)}}}
	+
	\bar M_{K-1}\log\frac1{\delta_K^{(V)}}
	\right)
	\Bigg\}.
	\]
	Then \(\widehat{\cE}_{V_K}\) holds with probability at least
	\(1-\delta_K^{(V)}\). Moreover, if \(f\) is \(L\)-smooth and
	\(\Delta_k\le\bar\Delta_k\) for \(k<K\), then
	\(\widehat{\cE}_{V_K}\subseteq\cE_{V_K}\).
\end{lemma}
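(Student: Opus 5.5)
The last stated result is Lemma~\ref{lem:V_up_explicit}. I would prove it in two parts: a probability bound for \(\widehat{\cE}_{V_K}\), then the deterministic inclusion \(\widehat{\cE}_{V_K}\subseteq\cE_{V_K}\).

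\emph{Probability bound.} The plan is to recognize \(\sum_k\bar w_kz_k^2\) as a weighted sum of conditionally \(\chi^2_1\) variables with deterministic weights. On the probability-one event of Proposition~\ref{prop:nonvanishing_gradients}, the unit vector \(\nabla f(\bx_k)/\|\nabla f(\bx_k)\|\) is \(\mathcal F_k^-\)-measurable. The direction \(\bu_k\) is a fresh standard Gaussian independent of \(\mathcal F_k^-\), so \(z_k\mid\mathcal F_k^-\sim\mathcal N(0,1)\), whatever the history. Hence the joint law of \((z_0,\dots,z_{K-1})\) is i.i.d.\ \(\mathcal N(0,1)\); this follows by induction on \(k\) using the tower property on characteristic functions. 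The weights \(\bar w_k\ge0\) are deterministic, since \(\rho_{K,k}\), \(T_k\) and \(\bar\Delta_k\) are all deterministic.

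The Laurent--Massart inequality for \(\sum_k a_k(z_k^2-1)\) with \(a_k\ge0\) then gives, with probability at least \(1-\delta_K^{(V)}\),
\[
\sum_k\bar w_kz_k^2\le \bar W_{K-1}+2\sqrt{\bar Q_{K-1}\log(1/\delta_K^{(V)})}+2\bar M_{K-1}\log(1/\delta_K^{(V)}).
\]
This yields \(\widehat{\cE}_{V_K}\) once \(C_V\ge4\). If one prefers to avoid the joint-law argument, the same bound follows from a sequential Chernoff supermartingale. For \(\lambda\bar M_{K-1}<1/2\) one uses \(\EE[e^{\lambda\bar w_k(z_k^2-1)}\mid\mathcal F_k^-]\le e^{\lambda^2\bar w_k^2/(1-2\lambda\bar w_k)}\), and then optimizes over \(\lambda\) exactly as in the Laurent--Massart proof.

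\emph{Inclusion.} For the inclusion I would write
\[
\frac{\rho_{K,k}^2(\nabla f(\bx_k)^\top\bu_k)^2\sigma^2}{T_k^2}=\sigma^2\frac{\rho_{K,k}^2}{T_k^2}\|\nabla f(\bx_k)\|^2z_k^2 .
\]
By \(L\)-smoothness, minimizing the upper quadratic in \eqref{eq:L} at \(\by=\bx-\nabla f(\bx)/L\) gives \(\|\nabla f(\bx_k)\|^2\le 2L\Delta_k\). With the hypothesis \(\Delta_k\le\bar\Delta_k\), this bounds the left side of \eqref{eq:V_up_explicit} by \(2L\sigma^2\sum_k\bar w_kz_k^2\). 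On \(\widehat{\cE}_{V_K}\) that quantity is at most \(L\sigma^2C_V(\cdots)\), which is exactly the bound defining \(\cE_{V_K}\).

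\emph{Main obstacle.} The only delicate step is justifying the conditional Gaussianity of \(z_k\) despite the adaptive dependence of \(\bx_k\) on past noise and directions. Rotational invariance of \(\bu_k\), together with its independence from \(\mathcal F_k^-\), handles this. The deterministic weights matter here, because the realized products \(\Pi_{K,k}\) were replaced by \(\rho_{K,k}\); this keeps the Laurent--Massart weights predictable and nonrandom.
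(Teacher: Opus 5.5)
Your proposal is correct and follows the paper's proof essentially step for step. The probability bound comes from conditional Gaussianity of \(z_k\) given the pre-direction history plus deterministic weights, which yields the Laurent--Massart bound; the paper phrases this through the product mgf bound obtained by repeated conditioning, which is your sequential Chernoff alternative. The inclusion then follows from \(\|\nabla f(\bx_k)\|^2\le 2L\Delta_k\le 2L\bar\Delta_k\). Your explicit constant check (\(C_V\ge4\) in the first part, and the exact identity \(2L\sigma^2\cdot C_V/2=C_VL\sigma^2\) in the inclusion) is slightly sharper than the paper's ``choose \(C_V\) large enough.''
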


Lemma~\ref{lem:V_up_explicit} verifies the raw-projection part of the
product-linear variance proxy once the induction envelope
\(\Delta_k\le\bar\Delta_k\) is available.

\subsection{Quadratic Noise Term}

Lemma~\ref{lem:subg_random_direction_square} controls the quadratic-noise
term in Eq.~\eqref{eq:dec_2} through the scalar ratio
\(
\frac{(\bu_k^\top\be_k)^2}{\|\bu_k\|^2},
\)
without imposing a uniform bound on \(\|\be_k\|\).
\begin{lemma}
\label{lem:Q_up}
Consider Algorithm~\ref{alg:SA_1} under
Assumptions~\ref{ass:est} and~\ref{ass:SubG}, with fresh independent Gaussian
directions and oracle samples. Let \(0<\delta_K^{(Q)}<1\). Define the event
\begin{equation}
\label{eq:cE_Q_subg}
\begin{aligned}
\cE_{Q_K}:= \Bigg\{
\sum_{k=0}^{K-1}
\frac{\rho_{K,k}}{T_k^2}
\cdot
\frac{(\bu_k^\top \be_k)^2}{\|\bu_k\|^2}
\leq
\frac{2\sigma^2}{d}
\sum_{k=0}^{K-1} \frac{\rho_{K,k}}{T_k^2}
+
2\sigma^2
\log\frac{1}{\delta_K^{(Q)}}
\max_{0 \leq k \leq K-1}
\frac{\rho_{K,k}}{T_k^2}
\Bigg\}.
\end{aligned}
\end{equation}
The event \(\cE_{Q_K}\) holds with probability at least
\(1-\delta_K^{(Q)}\).
\end{lemma}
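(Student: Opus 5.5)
The event \(\cE_{Q_K}\) is the sequential weighted bound Eq.~\eqref{eq:subg_weighted_quadratic} of Lemma~\ref{lem:subg_random_direction_square}, specialized to the weights
\[
w_k:=\frac{\rho_{K,k}}{T_k^2},\qquad k=0,\dots,K-1.
\]
So the proof reduces to checking that lemma's hypotheses and substituting. First I check that the weights are admissible. By Eq.~\eqref{eq:rho_kk}, \(\rho_{K,k}\) is an explicit function of \(K\), \(k\), \(T_0\) and \(\Gamma_T(\delta)\), and \(T_k=k+T_0\) is deterministic. Hence each \(w_k\) is a nonnegative deterministic number fixed before the algorithm runs, as the sequential statement requires. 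With this choice, \(W=\sum_k\rho_{K,k}/T_k^2\) and \(w_{\max}=\max_k\rho_{K,k}/T_k^2\), which are exactly the quantities on the right-hand side of Eq.~\eqref{eq:cE_Q_subg}.

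Next I verify the stochastic hypotheses along the trajectory, using the pre-direction history \(\mathcal F_k^-\). Proposition~\ref{prop:algorithm_conditional_subg} gives \(\EE[\be_k\mid\mathcal F_k^-]=0\) and the exponential-moment bound for all \(0<\lambda<\sigma^{-2}\) with \(\mathcal G_k=\mathcal F_k^-\). The algorithm draws \(\bu_k\sim\cN(0,I_d)\) freshly and independently of \(\xi_k\), and \(\bx_k\) is \(\mathcal F_k^-\)-measurable. Therefore \(\bu_k\) is independent of \(\mathcal F_k^-\vee\sigma(\be_k)\), which is the independence needed for the one-step bound Eq.~\eqref{eq:subg_random_direction_square_mgf}. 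Also, \((\bu_k,\be_k)\) and hence the ratio \((\bu_k^\top\be_k)^2/\|\bu_k\|^2\) are \(\mathcal F_{k+1}^-\)-measurable, so the summands are adapted to the filtration \(\{\mathcal F_{k+1}^-\}\). On the null event \(\bu_k=0\) the ratio may be set to \(0\); this affects nothing.

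Finally I apply the sequential conclusion of Lemma~\ref{lem:subg_random_direction_square} with confidence level \(\delta_K^{(Q)}\). This gives, with probability at least \(1-\delta_K^{(Q)}\),
\[
\sum_{k=0}^{K-1}\frac{\rho_{K,k}}{T_k^2}\frac{(\bu_k^\top\be_k)^2}{\|\bu_k\|^2}\le\frac{2\sigma^2}{d}W+2\sigma^2w_{\max}\log\frac1{\delta_K^{(Q)}},
\]
which is exactly \(\cE_{Q_K}\).

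The only point needing care is that the weights must not be the realized products \(\Pi_{K,k}\), which depend on future directions. That is why the lemma uses the deterministic \(\rho_{K,k}\), matching how Lemma~\ref{lem:dec_2} already replaced \(\Pi_{K,k}\) by \(\rho_{K,k}\) in the quadratic term. If one wanted a self-contained argument instead of quoting the sequential form, it would go as follows. Set \(\lambda=1/(2\sigma^2w_{\max})\), so that \(\lambda w_k\le1/(2\sigma^2)\) for every \(k\). Iterating the conditional bound \(\EE[\exp(\lambda w_k(\bu_k^\top\be_k)^2/\|\bu_k\|^2)\mid\mathcal F_k^-]\le\exp(2\lambda w_k\sigma^2/d)\) by the tower property shows that \(\exp\bigl(\lambda\sum_{j<k}w_j(\bu_j^\top\be_j)^2/\|\bu_j\|^2-2\lambda\sigma^2\sum_{j<k}w_j/d\bigr)\) is a supermartingale. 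Markov's inequality applied at time \(K\) then gives the bound above.
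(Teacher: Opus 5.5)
Your proposal is correct and matches the paper's proof. The paper likewise applies the sequential bound Eq.~\eqref{eq:subg_weighted_quadratic} of Lemma~\ref{lem:subg_random_direction_square} with \(\mathcal F=\mathcal F_k^{-}\) and the deterministic weights \(w_k=\rho_{K,k}/T_k^2\); your optional self-contained supermartingale-plus-Markov argument with \(\lambda=1/(2\sigma^2w_{\max})\) is exactly how the paper proves that sequential bound.
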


Lemma~\ref{lem:Q_up} controls the nonnegative quadratic stochastic-error term
in the unrolled recursion.

\begin{lemma}
\label{lem:Q_up_1}
	Under the conditions of Lemma~\ref{lem:Q_up},
	\begin{align*}
		\sum_{k=0}^{K-1} \frac{ \rho_{K,k} }{ T_k^2} \cdot \frac{|\bu_k^\top \be_k|^2}{\norm{\bu_k}^2}
		\leq
		\frac{2\sigma^2}{d}
		\sum_{k=0}^{K-1} \frac{\rho_{K,k}}{T_k^2}
		+
		2\sigma^2
		\log\frac{1}{\delta_K^{(Q)}}
		\max_{0 \leq k \leq K-1}
		\frac{\rho_{K,k}}{T_k^2}.
	\end{align*}
	This bound holds on \(\cE_{Q_K}\).
\end{lemma}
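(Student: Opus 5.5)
The statement of Lemma~\ref{lem:Q_up_1} is a direct restatement of the defining inequality of the event \(\cE_{Q_K}\) in Lemma~\ref{lem:Q_up}, so the plan is simply to unfold the definition. First, I would check that the left-hand sides agree. On the probability-one event of Proposition~\ref{prop:nonvanishing_gradients} (where all \(\|\bu_k\|>0\) almost surely), the summand \(\rho_{K,k}T_k^{-2}|\bu_k^\top\be_k|^2/\|\bu_k\|^2\) is exactly the summand in Eq.~\eqref{eq:cE_Q_subg}, because \(|\bu_k^\top\be_k|^2=(\bu_k^\top\be_k)^2\). The right-hand sides are also literally identical. Hence, on \(\cE_{Q_K}\), the displayed inequality holds pointwise, and that is the whole content of the claim.

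For completeness, I would also recall why \(\cE_{Q_K}\) has probability at least \(1-\delta_K^{(Q)}\), which is the content of Lemma~\ref{lem:Q_up}. The argument applies the sequential part of Lemma~\ref{lem:subg_random_direction_square} with the deterministic weights \(w_k=\rho_{K,k}/T_k^2\). These are deterministic because \(\rho_{K,k}\) in Eq.~\eqref{eq:rho_kk} depends only on \(T_0,\Gamma_T(\delta),C\) and the indices. The hypotheses of Lemma~\ref{lem:subg_random_direction_square} follow from the pre-direction form of Proposition~\ref{prop:algorithm_conditional_subg}:
\begin{itemize}
\item \(\be_k\) is conditionally mean zero given \(\mathcal F_k^-\);
\item \(\be_k\) satisfies the exponential-moment bound given \(\mathcal F_k^-\);
\item \(\bu_k\) is fresh and independent of both \(\mathcal F_k^-\) and \(\xi_k\).
\end{itemize}
Substituting \(W=\sum_k w_k\) and \(w_{\max}=\max_k w_k\) into Eq.~\eqref{eq:subg_weighted_quadratic} reproduces the bound exactly.

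I do not expect a real obstacle. The only point needing care is that the weights be deterministic, or at least fixed before the directions are drawn; otherwise the supermartingale argument behind Eq.~\eqref{eq:subg_weighted_quadratic} would not apply. The deterministic comparison constants \(\rho_{K,k}\), rather than the random products \(\Pi_{K,k}\), are used precisely so that this holds.
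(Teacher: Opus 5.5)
Your proposal is correct and matches the paper's proof, which simply notes that the displayed inequality is the defining inequality of \(\cE_{Q_K}\); your recap of why \(\Pr(\cE_{Q_K})\ge1-\delta_K^{(Q)}\) also mirrors the paper's proof of Lemma~\ref{lem:Q_up}. One small citation slip: \(\|\bu_k\|>0\) almost surely is a property of the Gaussian law, not the content of Proposition~\ref{prop:nonvanishing_gradients} (which is about nonvanishing gradients), but this is immaterial because both sides of the comparison use the identical expression.
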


Lemma~\ref{lem:Q_up_1} records the exact deterministic inequality used later
inside the induction proof.

\subsection{Smoothing-Bias Term}

It remains to control the smoothing-bias contribution in
Eq.~\eqref{eq:dec_2}.
\begin{lemma}
\label{lem:alp_term_up}
Let \(\{\bu_k\}_{k=0}^{K-1}\) be fresh independent
\(\mathcal N(0,I_d)\) directions. Let \(c_1=dL^2/(2\mu)\),
\(c_2=4d^2L^3/\mu^2\), and \(0<\delta_K^{(\alpha)}<1\). Define the event
\begin{equation}
\begin{aligned}
\cE_{\alpha_K} :=&
\Bigg\{
\sum_{k=0}^{K-1} \rho_{K,k}
\left(
\frac{c_1}{T_k} + \frac{c_2}{T_k^2}
\right)\norm{\bu_k}^2
\leq
C_\chi d
\Bigg[
\sum_{k=0}^{K-1} \rho_{K,k}
\left(
\frac{c_1}{T_k} + \frac{c_2}{T_k^2}
\right)
\\
&
+
\sqrt{ \log\frac{1}{\delta_K^{(\alpha)}} \cdot \sum_{k=0}^{K-1} \rho_{K,k}^2
	\left(
	\frac{c_1}{T_k} + \frac{c_2}{T_k^2}
	\right)^2}
+
	\log\frac{1}{\delta_K^{(\alpha)}} \max_{0 \leq k \leq K-1} \rho_{K,k}
\left(
\frac{c_1}{T_k} + \frac{c_2}{T_k^2}
\right)
\Bigg]
\Bigg\}.
\end{aligned}
\end{equation}
Then \(\cE_{\alpha_K}\) holds with probability at least
\(1-\delta_K^{(\alpha)}\), where \(C_\chi\ge1\) is a universal numerical
constant.
\end{lemma}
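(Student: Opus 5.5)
The event \(\cE_{\alpha_K}\) concerns a weighted sum of independent \(\chi^2_d\) variables with deterministic weights. I would prove it by a Laurent--Massart type (sub-exponential Bernstein) bound.

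The coefficients \(\rho_{K,k}\), \(c_1\), \(c_2\), \(T_k\) are all deterministic; this is the point of replacing the random products by the constants of Eq.~\eqref{eq:rho_kk}. Set \(a_k:=\rho_{K,k}(c_1/T_k+c_2/T_k^2)\ge0\). The \(\bu_k\) are fresh independent \(\mathcal N(0,I_d)\) vectors, so \(\|\bu_k\|^2\) are independent \(\chi^2_d\) variables. No conditioning on the trajectory or the noise is needed, because the weights do not depend on \(\bx_k\).

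The quantity to bound is therefore \(S:=\sum_k a_k\|\bu_k\|^2=\sum_{k}\sum_{i=1}^d a_k g_{k,i}^2\), with \(g_{k,i}\) i.i.d.\ standard normal. This is a weighted sum of \(Kd\) independent \(\chi^2_1\) variables. The weight vector has \(\ell_1\) norm \(d\sum_k a_k\), squared \(\ell_2\) norm \(d\sum_k a_k^2\), and \(\ell_\infty\) norm \(\max_k a_k\).

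Laurent--Massart (Lemma 1 of their paper) then gives, for \(x=\log(1/\delta_K^{(\alpha)})\),
\[
S\le d\sum_k a_k+2\sqrt{d\sum_k a_k^2\,x}+2\max_k a_k\,x
\]
with probability at least \(1-\delta_K^{(\alpha)}\). If one prefers to avoid citing it, the same bound follows from the Chernoff method using \(\EE e^{\lambda a g^2}=(1-2\lambda a)^{-1/2}\le e^{\lambda a+2\lambda^2a^2}\) for \(\lambda a\le1/4\), optimized over \(\lambda\le 1/(4\max a_k)\).

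It remains to match the displayed form. Since \(d\ge1\), we have \(\sqrt{d}\le d\) and \(1\le d\), so
\[
S\le d\Big[\sum a_k+2\sqrt{x\sum a_k^2}+2x\max a_k\Big],
\]
which is the claim with \(C_\chi=2\) (or any larger universal constant).

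There is no real obstacle here. The only point requiring care is confirming that the weights are deterministic, so that independence of the Gaussian directions alone suffices. This is exactly why the lemma is stated with \(\rho_{K,k}\) rather than \(\Pi_{K,k}\). One should also note that the trivial case \(a_k=0\) for all \(k\) holds vacuously.
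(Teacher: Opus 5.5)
Your proposal is correct and matches the paper's proof: both apply the Laurent--Massart bound to the $Kd$ independent standard Gaussian coordinates of $\bu_0,\dots,\bu_{K-1}$, with each deterministic weight $\rho_{K,k}(c_1/T_k+c_2/T_k^2)$ repeated $d$ times, and then use $\sqrt d\le d$ and $1\le d$ to reach the displayed form. Your self-contained Chernoff alternative gives somewhat larger constants than $2$ (for example $2\sqrt2$ and $8$), which is harmless because $C_\chi$ only needs to be a universal constant.
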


Lemma~\ref{lem:alp_term_up} bounds the accumulated bias caused by the
finite-difference radius \(\alpha\).

\subsection{Induction Proof of Theorem~\ref{thm:main_1}}

We conclude the analysis by collecting deterministic bounds on
\(\rho_K\) and \(\rho_{K,k}\), assembling all concentration events, and closing
the induction.
\begin{lemma}[Deterministic bounds on $\rho_K$ and $\rho_{K,k}$]
	\label{lem:c_rho}
	Let \(\rho_K\) and \(\rho_{K,k}\) be defined by
	Eqs.~\eqref{eq:rho_k} and \eqref{eq:rho_kk}, respectively. Then, for every
	\(1\le K\le T\) and \(0\le k\le K-1\),
	\begin{equation}
		\label{eq:c_rho}
		\rho_K
		\le
		c_\rho
		\frac{T_0}{T_K},
		\qquad
		\rho_{K,k}
		\le
		c_\rho
		\frac{T_k}{T_K},
	\end{equation}
	and, moreover,
	\begin{equation}
		\label{eq:c_rho_lower}
		\rho_{K,k}
		\ge
		\frac{c_\rho}{2}
		\frac{T_k}{T_K}.
	\end{equation}
	Here \(C>0\) is the universal constant in
	Lemma~\ref{lem:weighted_scan_bounds} and \(c_\rho\) is defined by
	\begin{equation}
		\label{eq:c-rho}
		c_\rho
		:=
		2\exp\!\left(
		\frac{2C}{T_0}\Gamma_T(\delta)
		\right).
	\end{equation}
\end{lemma}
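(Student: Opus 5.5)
The plan is to compare the harmonic-type sums in the exponents of $\rho_K$ and $\rho_{K,k}$ with logarithms via integral comparison. Since $T_t=t+T_0$ with $T_0\ge 32$ (because $d\ge2$ and $L\ge\mu$ give $T_0=32dL/\mu\ge64$), the function $x\mapsto 1/x$ is decreasing. Hence for integers $a\le b$,
\[
\log\frac{T_b}{T_a}=\int_{T_a}^{T_b}\frac{dx}{x}\le\sum_{t=a}^{b-1}\frac1{T_t}\le\int_{T_a-1}^{T_b-1}\frac{dx}{x}=\log\frac{T_b-1}{T_a-1}.
\]
We write $E:=\exp\bigl(\tfrac{2C}{T_0}\Gamma_T(\delta)\bigr)$, so that $c_\rho=2E$.

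\textbf{Upper bounds.} For $\rho_K$, take $a=0$, $b=K$ in the lower estimate. This gives $\sum_{t=0}^{K-1}1/T_t\ge\log(T_K/T_0)$, and therefore $\rho_K\le E\,T_0/T_K\le c_\rho T_0/T_K$. For $\rho_{K,k}$, the sum runs over $t=k+1,\dots,K-1$, so we get $\sum\ge\log(T_K/T_{k+1})$ and $\rho_{K,k}\le E\,T_{k+1}/T_K$. Since $T_{k+1}=T_k+1\le 2T_k$ (using $T_k\ge T_0\ge1$), this yields $\rho_{K,k}\le 2E\,T_k/T_K=c_\rho T_k/T_K$. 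The factor $2$ in $c_\rho$ is exactly what absorbs the index shift. The case $k=K-1$, with an empty sum, is consistent: $\rho_{K,K-1}=E\le 2E\,T_{K-1}/T_K$, because $T_K\le 2T_{K-1}$.

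\textbf{Lower bound.} The upper integral comparison gives
\[
\sum_{t=k+1}^{K-1}\frac1{T_t}\le\log\frac{T_K-1}{T_{k+1}-1}=\log\frac{T_{K-1}}{T_k}\le\log\frac{T_K}{T_k}.
\]
Hence $\rho_{K,k}\ge E\,T_k/T_K=\tfrac{c_\rho}{2}T_k/T_K$. The empty-sum case again holds trivially.

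The only delicate point is bookkeeping: the sum in $\rho_{K,k}$ starts at $k+1$ rather than $k$, so upper and lower bounds land on $T_{k+1}$ versus $T_k$. The ratio $T_{k+1}/T_k\le2$ is what makes the factor $2$ between the upper constant $c_\rho$ and the lower constant $c_\rho/2$ work out, and I would check both endpoint cases explicitly.
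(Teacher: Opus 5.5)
Your proposal is correct and follows essentially the same route as the paper's proof. The paper also bounds the suffix sum below by $\log(T_K/T_{k+1})$ and absorbs the index shift with $T_{k+1}\le 2T_k$. For the lower bound it likewise uses the integral comparison $\sum_{t=k+1}^{K-1}1/T_t\le\log(T_{K-1}/T_k)\le\log(T_K/T_k)$. Your explicit treatment of the empty-sum case $k=K-1$ is a small addition, since the paper's Lemma~\ref{lem:T_sum} is stated only for $k<K$.
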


Lemma~\ref{lem:c_rho} converts the exponential \(\rho\)-weights into the
deterministic ratios \(T_0/T_K\) and \(T_k/T_K\), which are used throughout the
final induction.

\begin{lemma}[Time-uniform auxiliary controls]
\label{lem:time_uniform_controls}
Let the assumptions of Theorem~\ref{thm:main_1} hold and let
\[
	\bar\Delta_k:=\frac{\cC d\Theta_T}{T_k},
	\qquad k=0,\dots,T-1.
\]
Set
\[
	\delta_K^{(Y)}
	=
	\delta_K^{(V)}
	=
	\delta_K^{(Q)}
	=
	\delta_K^{(\alpha)}
	=:
	\delta_\star,
	\qquad
	\delta_\star:=\exp\{-\Gamma_T(\delta)\},
	\qquad
	\log\frac1{\delta_\star}=c_\delta\Lambda=\Gamma_T(\delta),
\]
and use the deterministic envelopes \(\bar A_K\) in
Eq.~\eqref{eq:Abar_product_linear}.
Here \(C_V\) and \(C_\chi\) are the universal constants in
Lemmas~\ref{lem:V_up_explicit} and~\ref{lem:alp_term_up}. The quantities
\(\delta_K^{(\cdot)}\) specify thresholds in the fixed-\(K\) event notation;
any fixed numerical allocation is absorbed into the universal constants
\(C,c_\rho,\cC,C_A\). Then, with probability at least \(1-\delta\), the
following controls hold:
\begin{enumerate}
	\item the weighted-suffix events
	\(\cE_{\rho_K}\) and \(\cE_{\rho_{K,k}}\), \(k=0,\dots,K-1\), together
	with the prefix upper-tail event \(\cE_\rho^+\), for every \(K\le T\);
	\item the norm event \(\cE_u\);
	\item the terminal relaxed raw Gaussian projection event
	\(\widehat{\cE}_{V_T}\) in Lemma~\ref{lem:V_up_explicit} with
	\(K=T\) and deterministic envelope \(\bar\Delta_k\);
	\item the product-linear implication event
	\[
		\mathcal L_K(\bar A_K)
		:=
		\cE_{Y_K}^{\Pi}(\bar A_K)
		\cup
		\{A_K^\Pi>\bar A_K\},
	\]
	for every \(K\le T\);
	\item the terminal relaxed quadratic-noise event \(\cE_{Q_T}\) and the
	terminal relaxed smoothing event \(\cE_{\alpha_T}\).
\end{enumerate}
Consequently, Eq.~\eqref{eq:c_del} below holds for every \(K\le T\):
\begin{equation}\label{eq:c_del}
\max\left\{
\log \frac{1}{\delta_K^{(Y)}},\;
\log \frac{1}{\delta_K^{(V)}},\;
\log \frac{1}{\delta_K^{(Q)}},\;
\log \frac{1}{\delta_K^{(\alpha)}}
\right\}
\leq c_\delta \Lambda.
\end{equation}
\end{lemma}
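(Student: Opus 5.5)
The proof is a union bound over the five families of controls, with a fixed numerical split of the failure budget \(\delta\). I would allot \(\delta/6\) to each of: the weighted-scan event of item~1, the norm event \(\cE_u\), the raw-projection event \(\widehat{\cE}_{V_T}\), the product-linear family, the quadratic event \(\cE_{Q_T}\), and the smoothing event \(\cE_{\alpha_T}\). The factor \(6\) in the hypothesis \(d\ge16\log(6T/\delta)\) matches this split. Eq.~\eqref{eq:c_del} needs no probabilistic argument: by construction every auxiliary parameter equals \(\delta_\star=e^{-\Gamma_T(\delta)}\), so every logarithm equals \(\Gamma_T(\delta)=c_\delta\Lambda\).

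\textbf{Items 1 and 2.} Item~1 is Lemma~\ref{lem:cE_rho} applied with confidence \(\delta/6\). The extra \(\log 6\) is absorbed into \(\Gamma_T(\delta)\) by enlarging the universal constant \(C\), and hence \(c_\rho\). Item~2 is Lemma~\ref{lem:E_u} with \(\delta^{(u)}=\delta/6\); its hypothesis is exactly \(d\ge16\log(6T/\delta)\).

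\textbf{Items 3 and 5.} These are terminal events, each at the single time \(K=T\). Lemmas~\ref{lem:V_up_explicit}, \ref{lem:Q_up} and \ref{lem:alp_term_up} give failure probability \(\delta_\star\le e^{-1}\delta\,e^{-\log J_T}\). This is at most \(\delta/6\) once the numerical allocation constant is absorbed, as the statement permits. The envelopes \(\bar\Delta_k\) and the weights \(\rho_{K,k}\) are deterministic, so these lemmas apply directly.

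\textbf{Item 4, the main obstacle.} I would apply Lemma~\ref{lem:product_linear_uniform} with \(\delta_{\rm lin}=\delta/6\) and \(G=\Gamma_T(\delta)\). Two things must be checked.

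First, the envelope event \(\mathcal P_T(C_P)\) must be contained in the intersection of the item~1 and item~2 events. On \(\cE_\rho^+\) we have \(2d\sum_{t<K}\zeta_t/T_t\le 2C\Lambda+2C\Gamma_T(\delta)/T_0\). Each factor satisfies \(q_t\ge 1-2d\zeta_t/T_t\ge 1-2d/T_0\ge 1/2\), since \(T_0=32dL/\mu\ge32d\). Then \(\log q_t^{-1}\le 2\cdot(2d\zeta_t/T_t)\). Together these give
\[
P_K^{-2}\le \exp\bigl(8C\Lambda+8C\Gamma_T(\delta)/T_0\bigr)\le (T+T_0)^{C_P}.
\]
The last step uses \(\Gamma_T(\delta)/T_0\lesssim\Lambda\), which follows from \(\log(1/\delta)\le d/16\le T_0\) (the high-dimensional condition) and the remaining logarithmic terms being \(\lesssim\Lambda\). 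Hence \(C_P\) is a universal constant.

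Second, the side condition \(\log(1+\log(e+U_T/\bar A_K))\le C G\) must hold. The envelope \(\bar A_K\) in Eq.~\eqref{eq:Abar_product_linear} scales as \(T_K^{-2}\), so \(U_T/\bar A_K\le (T+T_0)^{C_P+2}\). The iterated logarithm is therefore \(O(\log\log(e+T+T_0))\le O(\log J_T+\log(e+T_0))\le O(\Gamma_T(\delta))\). Here the \(\log\log(e+\mathfrak S)\) term in \(\Gamma_T\) absorbs the dependence on \(\cC\) and the problem parameters hidden in \(\bar A_K\).

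Lemma~\ref{lem:product_linear_uniform} then delivers the implication \(A_K^\Pi\le\bar A_K\Rightarrow \cE^{\Pi}_{Y_K}(\bar A_K)\) simultaneously for all \(K\) outside probability \(\delta/6\). This is precisely \(\mathcal L_K(\bar A_K)\), since \(\log(1/\delta_K^{(Y)})=G\).

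Summing the six failure probabilities gives total failure at most \(\delta\), which completes the proof.
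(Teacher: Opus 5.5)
Your route is the same as the paper's. You use a six-way split of $\delta$, apply Lemma~\ref{lem:cE_rho} and Lemma~\ref{lem:E_u} at level $\delta/6$ for items 1--2, make single terminal applications of Lemmas~\ref{lem:V_up_explicit}, \ref{lem:Q_up} and~\ref{lem:alp_term_up} at level $\delta_\star$, and invoke Lemma~\ref{lem:product_linear_uniform} with $G=\Gamma_T(\delta)$ after checking $\mathcal P_T(C_P)$ and the double-logarithmic side condition.

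One step does not hold as you justify it: the claim $\Gamma_T(\delta)/T_0\lesssim\Lambda$, which you use to make $C_P$ universal. The factor $\Gamma_T(\delta)$ contains $\log\log(e+\mathfrak S)$, and $\mathfrak S$ contains $\Delta_0,\sigma^2,L,\mu^{-1}$. This term is not bounded by a universal multiple of $T_0\Lambda$; take $\Delta_0$ or $\sigma^2$ doubly exponential in $T_0\Lambda$. So starting from $\cE_\rho^+$ as stated in Lemma~\ref{lem:cE_rho}, you only get $P_K^{-2}\le\exp(C\Lambda+C\Gamma_T(\delta)/T_0)$, with a parameter-dependent exponent.

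The paper avoids this by returning to the prefix bound \eqref{eq:weighted_prefix_upper_loglog} that Lemma~\ref{lem:weighted_scan_bounds} actually delivers at level $\delta/6$. Its confidence factor $\Gamma_T^{\rm scan}(\delta)=1+\log(6/\delta)+\log J_T+\log(e+T_0)$ has no $\mathfrak S$ term. Then $d\ge16\log(6T/\delta)$ and $T_0\ge32d$ give $\Gamma_T^{\rm scan}(\delta)/T_0\le C$, hence $d\sum_{t<K}\zeta_t/T_t\le C\Lambda$ and a universal $C_P$. Alternatively, $C_P$ enters Lemma~\ref{lem:stitched_product_mg} only through $U_\star$ inside a double logarithm. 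An exponent $C\Lambda+C\Gamma_T(\delta)/T_0$ would therefore still pass the side condition, because $\log(1+C(\Lambda+\Gamma_T(\delta)))\lesssim\Gamma_T(\delta)$. But then you must read Lemma~\ref{lem:product_linear_uniform} with a non-numerical $C_P$.

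Some smaller points:
\begin{itemize}
\item You state the inclusion backwards. You need the item-1 event to lie inside $\mathcal P_T(C_P)$, which is what your computation actually shows.
\item $\delta_\star\le\delta/6$ holds outright, with no constant absorption, because $\delta_\star$ carries the factor $(e+T_0)^{-1}$ with $T_0\ge32$.
\item You should record $G=\Gamma_T(\delta)\ge1+\log(6/\delta)$. It holds since $\log J_T+\log(e+T_0)\ge\log 6$.
\item The $\log\log(e+\mathfrak S)$ term plays no role in the side condition. The ratio $U_T/\bar A_K$ depends on $K$ only through $T_K^{-2}$ and is free of $\cC$ and the problem parameters.
\end{itemize}
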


Lemma~\ref{lem:time_uniform_controls} places the weighted-scan event, the
linear-martingale implication, and the terminal nonnegative controls on a
single event of probability at least \(1-\delta\). The main induction is
carried out on this event.

\begin{remark}[Nonstandard elements of the time-uniform proof]
The two ingredients in Lemma~\ref{lem:time_uniform_controls} that depart from
standard analyses are the one-sided weighted lower scan in
Lemma~\ref{lem:cE_rho} and the intrinsic-variance product-martingale stitching
in Lemma~\ref{lem:product_linear_uniform}, which invokes
Lemma~\ref{lem:stitched_product_mg}. The raw-projection, quadratic-noise, and
smoothing estimates are applied only at the terminal index \(T\). Consequently,
the proof avoids a direct union bound over terminal times \(K=1,\dots,T\); for
fixed problem parameters, the confidence cost remains
\(\log(1/\delta)+\log\log(e+T+T_0)\).
\end{remark}

\medskip

We now close the proof of Theorem~\ref{thm:main_1} by induction on the terminal
index.

\begin{proof}[\textbf{Proof of Theorem~\ref{thm:main_1}}]
	Let \(\mathcal G_T\) be the event whose probability is at least
	\(1-\delta\) by Lemma~\ref{lem:time_uniform_controls}. On this event, the
	weighted-suffix events \(\cE_{\rho_K}\), \(\cE_{\rho_{K,k}}\), and the
	norm event \(\cE_u\) hold for all relevant \(K,k\). Also, for every
	\(K\le T\), the product-linear implication
	\(\mathcal L_K(\bar A_K)\) holds with the deterministic envelopes
	\(\bar A_K\) specified in Lemma~\ref{lem:time_uniform_controls}. In
	addition, the terminal nonnegative controls \(\widehat{\cE}_{V_T}\),
	\(\cE_{Q_T}\), and \(\cE_{\alpha_T}\) hold.
	Moreover, all logarithmic factors in the auxiliary lemmas are bounded by
	\(c_\delta\Lambda=\Gamma_T(\delta)\), and Lemma~\ref{lem:c_rho} holds for
	every \(K\le T\).

	We prove the deterministic implication on \(\mathcal G_T\) by induction on
	\(K\).
	\noindent
	\paragraph{Base case.} For \(K=0\), the first term in the definition of
	\(\cC\) in Eq.~\eqref{eq:cC} gives
	\[
	\Delta_0 \le \frac{\cC d \Theta_T}{T_0}.
	\]

	\noindent
	\paragraph{Induction hypothesis.}
	Fix any \(K\in\{1,\dots,T\}\), and assume that, for every
	\(k=0,\dots,K-1\),
	\begin{equation}
		\label{eq:IH_main}
		\Delta_k \le \frac{\cC d \Theta_T}{T_k}.
	\end{equation}

	On \(\mathcal G_T\), Eq.~\eqref{eq:dec_2} in Lemma~\ref{lem:dec_2}
	holds. We bound its four terms separately.

	\paragraph{Initial term.}

	By Eq.~\eqref{eq:c_rho}, \(\rho_K\Delta_0\le c_\rho(T_0/T_K)\Delta_0\).
	Hence, since
	\(\cC \ge 8c_\rho T_0 \Delta_0/(d\Theta_T)\) in Eq.~\eqref{eq:cC},
	we have
	\begin{equation}
		\label{eq:init_absorb}
		\rho_K \Delta_0
		\leq
		c_\rho \frac{T_0}{T_K}\Delta_0
		\le \frac{\cC}{8}\frac{d \Theta_T}{T_K}.
	\end{equation}

	\noindent
	\paragraph{Terminal comparison for nonnegative suffix terms.}
	For \(k<K\le T\), the definitions of \(\rho_{K,k}\) and \(\rho_{T,k}\)
	give
	\[
		\rho_{K,k}
		=
		\rho_{T,k}
		\exp\left(\sum_{t=K}^{T-1}\frac1{T_t}\right).
	\]
	Since
	\[
		\sum_{t=K}^{T-1}\frac1{T_t}
		\le
		1+\log\frac{T_T}{T_K},
	\]
	there is a universal constant \(C\) such that
	\begin{equation}\label{eq:terminal_suffix_compare}
		\rho_{K,k}\le C\frac{T_T}{T_K}\rho_{T,k},
		\qquad 0\le k<K\le T,
	\end{equation}
	and hence
	\begin{equation}\label{eq:terminal_suffix_compare_sq}
		\rho_{K,k}^2\le C
		\left(\frac{T_T}{T_K}\right)^2\rho_{T,k}^2,
		\qquad 0\le k<K\le T.
	\end{equation}
	Thus any nonnegative suffix-weighted sum at terminal time \(K\) is bounded
	by the corresponding terminal-\(T\) sum, with the appropriate power of
	\(T_T/T_K\).

	\medskip
	\noindent
	\paragraph{Signed linear product term.}

	The linear term in Lemma~\ref{lem:dec_2} is multiplied by the actual suffix
	product \(\Pi_{K,k}\), not by the deterministic upper bound
	\(\rho_{K,k}\). Since this term is signed, the replacement
	\(\Pi_{K,k}\le\rho_{K,k}\) is not order-preserving at the level of the
	linear sum. Instead, the event in
	Lemma~\ref{lem:time_uniform_controls} includes the bound from
	Lemma~\ref{lem:product_linear_uniform}, which controls the signed
	product-linear sum with the actual product \(\Pi_{K,k}\). It remains to
	verify the variance-proxy condition \(A_K^\Pi\le\bar A_K\).

	Set
	\[
		\bar\Delta_k=\frac{\cC d\Theta_T}{T_k},
		\qquad k=0,\dots,T-1,
	\]
	so that \(\Delta_k\le\bar\Delta_k\) throughout the induction range
	\(k<K\). Let
	\(z_k=(\bu_k^\top\nabla f(\bx_k))/\|\nabla f(\bx_k)\|\). On \(\cE_u\),
	using \(\Pi_{K,k}\le\rho_{K,k}\), \(L\)-smoothness, and the induction
	hypothesis,
	\[
	A_K^\Pi
	\le
		\frac{4L}{d}
		\sum_{k=0}^{K-1}
		\frac{\rho_{K,k}^2}{T_k^2}\bar\Delta_k z_k^2.
	\]
	For the terminal weights in Lemma~\ref{lem:V_up_explicit}, write
	\[
		\bar W_{T-1}^{(T)}
		:=
		\sum_{k=0}^{T-1}\frac{\rho_{T,k}^2}{T_k^2}\bar\Delta_k,
		\qquad
		\bar Q_{T-1}^{(T)}
		:=
		\sum_{k=0}^{T-1}
		\left(\frac{\rho_{T,k}^2}{T_k^2}\bar\Delta_k\right)^2,
		\qquad
		\bar M_{T-1}^{(T)}
		:=
		\max_{0\le k\le T-1}
		\frac{\rho_{T,k}^2}{T_k^2}\bar\Delta_k.
	\]
	The squared terminal comparison in Eq.~\eqref{eq:terminal_suffix_compare_sq}
	and the terminal event \(\widehat{\cE}_{V_T}\) give
	\[
		A_K^\Pi
		\le
		C
		\frac{L}{d}
		\left(\frac{T_T}{T_K}\right)^2
		\left[
		\bar W_{T-1}^{(T)}
		+
		\sqrt{\bar Q_{T-1}^{(T)}\Gamma_T(\delta)}
		+
		\bar M_{T-1}^{(T)}\Gamma_T(\delta)
		\right].
	\]
	By Lemma~\ref{lem:c_rho} and Lemma~\ref{lem:T_sum},
	\[
		\bar W_{T-1}^{(T)}
		\le
		\frac{\cC d c_\rho^2\Theta_T\Lambda}{T_T^2},
		\qquad
		\bar Q_{T-1}^{(T)}
		\le
		\frac{2\cC^2d^2c_\rho^4\Theta_T^2}{T_T^4T_0},
		\qquad
		\bar M_{T-1}^{(T)}
		\le
		\frac{\cC d c_\rho^2\Theta_T}{T_T^2T_0}.
	\]
	Using \(\Gamma_T(\delta)=c_\delta\Lambda\) and \(\Lambda\ge1\), the three
	terms after multiplying by
	\((L/d)(T_T/T_K)^2\) are bounded respectively by
	\[
		C\frac{L\cC c_\rho^2\Theta_T\Lambda}{T_K^2},
		\qquad
		C\frac{L\cC c_\rho^2\Theta_T}{T_K^2}
		\sqrt{\frac{c_\delta\Lambda}{T_0}},
		\qquad
		C\frac{L\cC c_\rho^2\Theta_T\Lambda}{T_K^2}
		\frac{c_\delta}{T_0}.
	\]
	Since
	\(\sqrt{c_\delta\Lambda/T_0}\le
	C\Lambda(1+c_\delta/T_0)\), these estimates imply
	\[
		A_K^\Pi
		\le
		C
		\frac{L\cC c_\rho^2\Theta_T\Lambda}{T_K^2}
		\left(1+\frac{c_\delta}{T_0}\right)
		\le
		\bar A_K,
	\]
	after choosing the universal constant \(C_A\) in
	Lemma~\ref{lem:time_uniform_controls} large enough. Hence
	\(\mathcal L_K(\bar A_K)\) reduces to
	\(\cE_{Y_K}^{\Pi}(\bar A_K)\).
	Therefore, on \(\mathcal G_T\),
	\begin{equation}
	\label{eq:Y_up}
	\frac{4d}{\mu}
	\sum_{k=0}^{K-1}
	\Pi_{K,k}
	\frac{
	\nabla f(\bx_k)^\top\bu_k\,\bu_k^\top\be_k
	}{
	T_k\|\bu_k\|^2
	}
	\le
	C\frac{d\sigma c_\rho\Theta_T}{\mu T_K}
	\sqrt{
	L\cC
	\left(1+\frac{c_\delta}{T_0}\right)}
	\stackrel{\eqref{eq:cC}}{\le}
	\frac{\cC d\Theta_T}{8T_K}.
	\end{equation}

	\paragraph{Quadratic-noise term.}

	We invoke Lemma~\ref{lem:Q_up_1} only at the terminal index \(T\). On
	\(\mathcal G_T\), using Eq.~\eqref{eq:c_del}, Lemma~\ref{lem:c_rho}, and
	Lemma~\ref{lem:T_sum},
	\begin{align*}
	\sum_{k=0}^{T-1} \frac{ \rho_{T,k} }{ T_k^2} \cdot \frac{|\bu_k^\top \be_k|^2}{\norm{\bu_k}^2}
	&\le
	\frac{2\sigma^2}{d}
	\sum_{k=0}^{T-1}\frac{\rho_{T,k}}{T_k^2}
	+
	2\sigma^2
	\log\frac1{\delta_T^{(Q)}}
	\max_{0\le k\le T-1}\frac{\rho_{T,k}}{T_k^2}
	\\
	&\le
	\frac{2\sigma^2 c_\rho\Lambda}{dT_T}
	+
	\frac{2\sigma^2 c_\rho c_\delta\Lambda}{T_TT_0}.
	\end{align*}
	Therefore Eq.~\eqref{eq:terminal_suffix_compare} implies, for the current
	induction terminal time \(K\),
	\begin{equation}
		\label{eq:Q_up}
	\begin{aligned}
		&\frac{32Ld^2}{\mu^2}
		\sum_{k=0}^{K-1}
		\frac{\rho_{K,k}}{T_k^2}
		\cdot
		\frac{|\bu_k^\top \be_k|^2}{\|\bu_k\|^2}
		\leq
		C\frac{Ld\sigma^2c_\rho\Lambda}{\mu^2T_K}
		\left(
		1
		+
		\frac{d c_\delta}{T_0}
		\right)
		\stackrel{\eqref{eq:cC}}{\leq}
		\frac{\cC \cdot d\cdot \Theta_T}{8T_K}.
	\end{aligned}
	\end{equation}

	\medskip
	\noindent
	\paragraph{Smoothing-bias term.}
	Let
	\[
		a_k:=
		\frac{dL^2}{2\mu T_k}
		+
		\frac{4d^2L^3}{\mu^2T_k^2}.
	\]
	We invoke Lemma~\ref{lem:alp_term_up} only at the terminal index \(T\). On
	\(\mathcal G_T\), write
	\[
		A_T:=\sum_{k=0}^{T-1}\rho_{T,k}a_k,\qquad
		B_T:=\sum_{k=0}^{T-1}\rho_{T,k}^2a_k^2,\qquad
		M_T:=\max_{0\le k\le T-1}\rho_{T,k}a_k.
	\]
	Then Lemma~\ref{lem:alp_term_up} and Eq.~\eqref{eq:c_del} give
	\[
		\sum_{k=0}^{T-1}\rho_{T,k}a_k\|\bu_k\|^2
		\le
		Cd\left(A_T+\sqrt{\Gamma_T(\delta)B_T}
		+\Gamma_T(\delta)M_T\right).
	\]
	Using \(\rho_{T,k}\le c_\rho T_k/T_T\),
	Lemma~\ref{lem:T_sum}, \(T_T\ge T_0\), and
	\(\Gamma_T(\delta)=c_\delta\Lambda\), the three terms satisfy
	\[
	\begin{aligned}
		dA_T
		&\le
		Cc_\rho d
		\left(c_1+\frac{c_2\Lambda}{T_0}\right),
		\\
		d\sqrt{\Gamma_T(\delta)B_T}
		&\le
		Cc_\rho d\sqrt{c_\delta\Lambda}
		\left(\frac{c_1}{\sqrt{T_0}}+\frac{c_2}{T_0^{3/2}}\right),
		\\
		d\Gamma_T(\delta)M_T
		&\le
		Cc_\rho d c_\delta\Lambda
		\frac{1}{T_0}
		\left(c_1+\frac{c_2}{T_0}\right).
	\end{aligned}
	\]
	Substituting \(c_1=dL^2/(2\mu)\), \(c_2=4d^2L^3/\mu^2\), and
	\(T_0=32dL/\mu\), and using \(d,\Lambda\ge1\) and
	\(\sqrt{c_\delta\Lambda}\le \Lambda(1+c_\delta)\), yields
	\[
		\sum_{k=0}^{T-1}\rho_{T,k}a_k\|\bu_k\|^2
		\le
		Cc_\rho d^2\Lambda(1+c_\delta)
		\left(1+\frac{L^2}{\mu}+\frac{L^3}{\mu^2}\right),
	\]
	after increasing the universal numerical constant if necessary. Hence, by
	Eq.~\eqref{eq:terminal_suffix_compare},
	\begin{equation}
	\label{eq:alp_up}
	\alpha^2
	\sum_{k=0}^{K-1}\rho_{K,k}
	\left(
	\frac{dL^2}{2\mu T_k}
	+
	\frac{4d^2L^3}{\mu^2 T_k^2}
	\right)\|\bu_k\|^2
	\leq
	C\alpha^2\frac{T_T}{T_K}
	c_\rho d^2\Lambda(1+c_\delta)
	\left(1+\frac{L^2}{\mu}+\frac{L^3}{\mu^2}\right)
	\leq
	\frac{\cC d \Theta_T}{8 T_K},
	\end{equation}
	where the last inequality uses \(\alpha^2=1/(dT_T)\),
	\(\Theta_T=\Lambda\Gamma_T(\delta)\), and the last term in the definition
	of \(\cC\) in Eq.~\eqref{eq:cC}, after increasing the universal numerical
	constant if necessary.

	\medskip
	\noindent
	\paragraph{Conclusion of the induction step.}

	Combining Eq.~\eqref{eq:dec_2}, \eqref{eq:init_absorb},
	\eqref{eq:Y_up}, \eqref{eq:Q_up},
	and \eqref{eq:alp_up}, we conclude that
	\[
	\Delta_K
	\le
	\left(
	\frac{\cC}{8}
	+
	\frac{\cC}{8}
	+
	\frac{\cC}{8}
	+
	\frac{\cC}{8}
	\right)
	\frac{ d\Theta_T}{T_K}
	\le
	\cC \frac{ d\Theta_T}{T_K}.
	\]
	This proves the induction step.
	Since the induction is deterministic on \(\mathcal G_T\), it holds
	simultaneously for all \(K\le T\). Taking \(K=T\) yields the first inequality
	in Eq.~\eqref{eq:improved_rate_weighted}. To obtain the displayed
	big-\(\mathcal O\) form, multiply each term in the maximum defining \(\cC\)
	by \(d\Theta_T/(T+T_0)\), use
	\(\Theta_T=\Lambda\Gamma_T(\delta)\),
	\(T_0=32dL/\mu\), and \(c_\rho\ge1\), and absorb fixed problem parameters
	and universal numerical factors into the implicit constant.
\end{proof}

\medskip
\begin{proof}[Proof of Corollary~\ref{cor:main_1}]
	By Eq.~\eqref{eq:improved_rate_weighted}, with probability at least
	\(1-\delta\),
	\[
		f(\bx_T)-f(\bx^*)
		\le
		C
		\left(
		\frac{c_\rho T_0\Delta_0}{T+T_0}
		+
		c_\rho^2
		\frac{d\Lambda\Gamma_T(\delta)}{T+T_0}
		\left(1+\frac{\Gamma_T(\delta)}{T_0}\right)
		\right).
	\]
	By the definition of \(c_\rho\), the assumption
	\(T_0\gtrsim\Gamma_T(\delta)\) gives \(c_\rho=\mathcal O(1)\). Absorbing
	this numerical bound into the constant yields
	Eq.~\eqref{eq:simplified_rate}. The final
	\(\widetilde{\mathcal O}(d/T)\) display is the same estimate with fixed
	problem parameters and logarithmic factors suppressed.
\end{proof}

\section{Conclusion}

We established a direct high-probability last-iterate guarantee for the
standard same-sample two-point Gaussian method with a norm-normalized stepsize.
Under conditional sub-Gaussian
stochastic-gradient noise and the compatibility condition
\(d\ge16\log(6T/\delta)\), the last iterate satisfies
\[
	f(\bx_T)-f(\bx^*)
	=
	\widetilde{\mathcal O}\!\left(\frac{d}{T}\right)
\]
with probability at least \(1-\delta\), up to the explicit problem parameters
and logarithmic factors in Theorem~\ref{thm:main_1} and
Corollary~\ref{cor:main_1}.

The proof is built around two pathwise mechanisms. A weighted scan converts
random one-direction progress into uniform aggregate contraction, and an
angle-enlarged product-martingale argument controls the signed linear error
without discarding cancellation. The remaining variance, quadratic-noise, and
smoothing-bias contributions are handled by terminal nonnegative estimates.
This approach avoids both bounded-noise truncation and expectation-to-confidence
conversion, thereby preserving the classical zeroth-order rate at the level of
the last iterate.

\pb
\bibliography{ref}
\bibliographystyle{apalike2}

\appendix

\section{Useful Lemmas}

This appendix collects the concentration tools used in the proof. We state the
standard inequalities in the form required above and include the necessary
short derivations.

\begin{lemma}[Laurent--Massart bound~\citep{Laurent2000Adaptive}]
\label{lem:chi_all}
	Let \(X_1,\dots,X_K\stackrel{\mathrm{i.i.d.}}{\sim}\mathcal N(0,1)\), and
	let \(w_1,\dots,w_K\ge0\) be fixed weights. With probability at least
	\(1-\delta\),
	\begin{equation*}
		\sum_{k=1}^K w_kX_k^2
		\le
		\sum_{k=1}^K w_k
		+2\sqrt{
		\left(\sum_{k=1}^K w_k^2\right)
		\log\left(\frac1\delta\right)}
		+2\max_{1\le k\le K}w_k\log\left(\frac1\delta\right).
	\end{equation*}
	In particular, if \(X\sim\chi^2(k)\) with \(k>0\), then, for every
	\(\tau>0\),
	\begin{align*}
		\Pr\left(X\le k-2\sqrt{k\tau}\right)\le e^{-\tau}.
	\end{align*}
\end{lemma}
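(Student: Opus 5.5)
The plan is the standard Chernoff route, treating the weighted $\chi^2$ sum as a sum of independent centered sub-exponential variables and optimizing the exponential moment. Put $S:=\sum_k w_k(X_k^2-1)$, $v:=\sum_k w_k^2$ and $m:=\max_k w_k$. For $0\le\lambda<1/(2m)$ the Gaussian moment generating function gives
\[
\log\EE e^{\lambda w_k(X_k^2-1)}=-\tfrac12\log(1-2\lambda w_k)-\lambda w_k .
\]
Using the elementary inequality $-u-\tfrac12\log(1-2u)\le u^2/(1-2u)$ for $0\le u<1/2$, which can be checked by comparing power series, independence then yields
\[
\log\EE e^{\lambda S}\le \frac{\lambda^2 v}{1-2\lambda m}.
\]
This is the sub-gamma bound with variance factor $2v$ and scale $2m$.

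Next, the standard sub-gamma deviation lemma (Boucheron--Lugosi--Massart) gives
\[
\Pr\bigl(S\ge 2\sqrt{vx}+2mx\bigr)\le e^{-x}.
\]
The one step needing care is the choice of $\lambda$. Take
\[
\lambda=\frac{\sqrt{x}}{\sqrt v+2m\sqrt x},
\]
and verify that $\lambda\bigl(2\sqrt{vx}+2mx\bigr)-\lambda^2v/(1-2\lambda m)\ge x$; this is a short algebraic check. Setting $x=\log(1/\delta)$ and adding back $\sum_k w_k$ gives the first display of the statement. I expect this $\lambda$ verification to be the main obstacle.

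For the lower tail of $X\sim\chi^2(k)$, apply the same argument to $-S$ with all weights equal to $1$. For $\lambda>0$,
\[
\log\EE e^{-\lambda(X_i^2-1)}=\lambda-\tfrac12\log(1+2\lambda)\le\lambda^2 ,
\]
so $\log\EE e^{-\lambda(X-k)}\le k\lambda^2$. The Chernoff bound with the choice $\lambda=\sqrt{\tau/k}$ then gives
\[
\Pr\bigl(X\le k-2\sqrt{k\tau}\bigr)\le\exp\bigl(-2\sqrt{k\tau}\,\lambda+k\lambda^2\bigr)=e^{-\tau}.
\]
The statement allows non-integer $k>0$. In that case I would run the identical argument with the Gamma$(k/2,2)$ moment generating function $(1-2\lambda)^{-k/2}$, evaluated at $-\lambda$, which gives the same exponent $k\bigl(\lambda-\tfrac12\log(1+2\lambda)\bigr)\le k\lambda^2$.
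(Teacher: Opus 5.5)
Your argument is correct: the series comparison gives $-u-\tfrac12\log(1-2u)\le u^2/(1-2u)$, your $\lambda=\sqrt{x}/(\sqrt v+2m\sqrt x)$ satisfies $\lambda<1/(2m)$ and makes the Chernoff exponent exactly $-x$, and $\lambda-\tfrac12\log(1+2\lambda)\le\lambda^2$ (also via the Gamma transform for non-integer $k$) gives the lower tail; the degenerate case $v=0$ is trivial. The paper gives no proof of its own and simply cites Laurent and Massart, whose argument is this same Chernoff/sub-gamma computation, so your proposal reproduces the intended proof.
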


\begin{lemma}
\label{lem:u_norm}
	Let \(X_1,\dots,X_N\stackrel{\mathrm{i.i.d.}}{\sim}\chi^2(k)\), and
	define \(M_N:=\min_{1\le i\le N}X_i\). Then, for every
	\(0<\delta<1\),
	\begin{equation*}
	\Pr\!\left(
	M_N
	\le
	k-2\sqrt{k\left(\log N+\log\frac1\delta\right)}
	\right)
	\le
	\delta.
	\end{equation*}
\end{lemma}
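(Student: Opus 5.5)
The plan is a union bound over the \(N\) indices combined with the one-sided lower tail of the single-variable \(\chi^2(k)\) bound in Lemma~\ref{lem:chi_all}, applied at an inflated confidence level.

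First, write the event \(\{M_N\le a\}\) as \(\bigcup_{i=1}^N\{X_i\le a\}\), where
\[
a:=k-2\sqrt{k\left(\log N+\log\frac1\delta\right)}.
\]
By the union bound and identical distribution,
\[
\Pr(M_N\le a)\le N\,\Pr(X_1\le a).
\]

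Second, invoke the second statement of Lemma~\ref{lem:chi_all} with \(\tau:=\log N+\log(1/\delta)=\log(N/\delta)\). This is strictly positive because \(N\ge1\) and \(0<\delta<1\), so the lemma applies. Since \(k-2\sqrt{k\tau}=a\), it gives
\[
\Pr(X_1\le a)\le e^{-\tau}=\frac{\delta}{N}.
\]
Multiplying by \(N\) yields the bound \(\delta\).

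The only point requiring care is the non-strict versus strict inequality in the event, and that is handled directly: Lemma~\ref{lem:chi_all} is stated with \(\le\), which matches the event \(\{X_i\le a\}\). If \(a<0\), the probability is trivially zero and the bound still holds.

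There is no genuine obstacle. The argument is a one-line union bound, and the stated tail of Lemma~\ref{lem:chi_all} (which follows from Laurent--Massart with unit weights) is taken as given.
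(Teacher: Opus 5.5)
Your proposal is correct and matches the paper's proof: a union bound over the $N$ indices combined with the lower tail of Lemma~\ref{lem:chi_all} at $\tau=\log N+\log(1/\delta)$, so that $Ne^{-\tau}=\delta$. Your remarks that $\tau>0$ and that the case $a<0$ is trivial are harmless additions that the paper leaves implicit.
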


\begin{proof}
	By the lower-tail part of Lemma~\ref{lem:chi_all}, for each \(i\),
\[
\Pr\left(
X_i\le k-2\sqrt{k\tau}
\right)\le e^{-\tau},
\qquad \tau>0.
\]
Take \(\tau=\log N+\log(1/\delta)\). A union bound over
\(i=1,\dots,N\) gives
\[
\Pr\left(
\min_{1\le i\le N}X_i
\le
k-2\sqrt{k\left(\log N+\log\frac1\delta\right)}
\right)
\le
Ne^{-\tau}
=\delta.
\]
\end{proof}

\begin{lemma}[Ville's inequality]
\label{lem:ville}
Let \(\{Z_n\}_{n=0}^{N}\) be a nonnegative supermartingale. Then, for every
\(y>0\),
\[
\mathbb P\left(\max_{0\le n\le N}Z_n\ge y\right)
\le
\frac{\mathbb E[Z_0]}{y}.
\]
In particular, if \(Z_0=1\) almost surely, the right-hand side equals \(1/y\).
\end{lemma}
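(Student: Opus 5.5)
The statement to prove is Ville's inequality for a nonnegative supermartingale \(\{Z_n\}_{n=0}^N\) over a finite horizon. I would use the standard stopping-time argument combined with optional stopping for bounded stopping times. Since the horizon is finite, no convergence theorem is needed.

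Fix \(y>0\). Define the first crossing time
\[
\tau:=\min\{n\le N: Z_n\ge y\},
\]
with \(\tau:=N\) if no such \(n\) exists. This \(\tau\) is a stopping time bounded by \(N\). The optional stopping theorem for supermartingales at bounded stopping times gives \(\mathbb E[Z_\tau]\le\mathbb E[Z_0]\). Alternatively, one can show directly that the stopped process \(Z_{n\wedge\tau}\) is again a supermartingale, since \(Z_{(n+1)\wedge\tau}-Z_{n\wedge\tau}=\mathbf 1\{\tau>n\}(Z_{n+1}-Z_n)\) and \(\{\tau>n\}\) is \(\mathcal F_n\)-measurable. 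Taking expectations and iterating then yields \(\mathbb E[Z_{N\wedge\tau}]\le \mathbb E[Z_0]\).

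On the event \(A:=\{\max_{n\le N}Z_n\ge y\}\), the definition of \(\tau\) gives \(Z_\tau\ge y\). Off \(A\), nonnegativity gives \(Z_\tau\ge0\). Hence \(Z_\tau\ge y\,\mathbf 1_A\). Taking expectations,
\[
y\,\mathbb P(A)\le\mathbb E[Z_\tau]\le\mathbb E[Z_0],
\]
which is the claim. When \(Z_0=1\) almost surely, the right-hand side becomes \(1/y\).

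There is no real obstacle here. The only point needing care is measurability, namely that \(\{\tau>n\}\in\mathcal F_n\), which is needed for the stopped-process supermartingale property. Integrability of \(Z_n\) is part of the definition of a supermartingale, so \(\mathbb E[Z_\tau]\) is finite because \(Z_\tau\le\sum_{n\le N}Z_n\).
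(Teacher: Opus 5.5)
Your proof is correct and complete. The paper states Ville's inequality as a standard tool and gives no proof. Your argument is the usual textbook one that the paper implicitly relies on: take the first-passage time \(\tau\) (bounded by \(N\)), show the stopped process is a supermartingale via \(Z_{(n+1)\wedge\tau}-Z_{n\wedge\tau}=\mathbf 1\{\tau>n\}(Z_{n+1}-Z_n)\), and then integrate the pointwise bound \(Z_\tau\ge y\,\mathbf 1_A\).
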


We also use the following standard decomposition of the two-point stochastic
zeroth-order estimator.
\begin{lemma}
	\label{lem:g_decom}
	Suppose that \(f(\cdot;\xi)\) is \(L\)-smooth. Then the stochastic
	zeroth-order estimator \(\bg(\bx;\xi)\) defined in Eq.~\eqref{eq:sg}
	satisfies
	\begin{equation}\label{eq:sg1}
		\bg(\bx;\xi)
		=
		\bu\bu^\top\nabla f(\bx;\xi)
		+ \beta \cdot \bu,
	\end{equation}
	where \(|\beta|\le(L\alpha/2)\norm{\bu}^2\).
\end{lemma}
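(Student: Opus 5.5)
The statement to prove is Lemma~\ref{lem:g_decom}: the two-point estimator splits into a directional-derivative part plus a scalar multiple of \(\bu\) with an \(\cO(L\alpha\|\bu\|^2)\) coefficient. The plan is to write the finite difference as an integral of the directional derivative of \(f(\cdot;\xi)\) along the segment through \(\bx\). Then \(\beta\) is whatever is left over, and the smoothness of \(f(\cdot;\xi)\) bounds it.

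Fix \(\bx\), \(\bu\), \(\xi\), and set \(\phi(s):=f(\bx+s\bu;\xi)\) for \(s\in[-\alpha,\alpha]\). Since \(f(\cdot;\xi)\) is \(L\)-smooth, it is differentiable with \(L\)-Lipschitz gradient. Hence \(\phi\) is \(C^1\) with \(\phi'(s)=\bu^\top\nabla f(\bx+s\bu;\xi)\). By the fundamental theorem of calculus,
\[
\frac{f(\bx+\alpha\bu;\xi)-f(\bx-\alpha\bu;\xi)}{2\alpha}
=\frac{1}{2\alpha}\int_{-\alpha}^{\alpha}\bu^\top\nabla f(\bx+s\bu;\xi)\,ds
=\bu^\top\nabla f(\bx;\xi)+\beta,
\]
where we define
\[
\beta:=\frac{1}{2\alpha}\int_{-\alpha}^{\alpha}\bu^\top\bigl(\nabla f(\bx+s\bu;\xi)-\nabla f(\bx;\xi)\bigr)\,ds .
\]
Multiplying by \(\bu\) gives the identity \(\bg(\bx;\xi)=\bu\bu^\top\nabla f(\bx;\xi)+\beta\bu\) exactly.

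For the bound on \(\beta\), apply Cauchy--Schwarz and the Lipschitz gradient inside the integral:
\[
|\bu^\top(\nabla f(\bx+s\bu;\xi)-\nabla f(\bx;\xi))|\le L|s|\,\|\bu\|^2 .
\]
Integrating gives \(|\beta|\le \frac{L\|\bu\|^2}{2\alpha}\int_{-\alpha}^{\alpha}|s|\,ds=\frac{L\alpha}{2}\|\bu\|^2\), which is the claimed constant.

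A sharper route would note that the symmetric difference cancels the odd part of the error, leaving a second-order term. This is not needed, because the crude Lipschitz bound already gives \(L\alpha/2\).

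There is no real obstacle. The only point to check is that \(L\)-smoothness, which Assumption~\ref{ass:est} states in the quadratic-upper-bound form, is understood as \(\nabla f(\cdot;\xi)\) being \(L\)-Lipschitz, as intended. If only the upper quadratic bound (Eq.~\eqref{eq:L} for \(f(\cdot;\xi)\)) were available, I would instead expand \(f(\bx\pm\alpha\bu;\xi)\) around \(\bx\) with the two-sided smoothness inequalities. Those give \(|f(\bx\pm\alpha\bu;\xi)-f(\bx;\xi)\mp\alpha\bu^\top\nabla f(\bx;\xi)|\le\frac{L}{2}\alpha^2\|\bu\|^2\). Subtracting the two expansions and dividing by \(2\alpha\) yields \(|\beta|\le\frac{L\alpha}{2}\|\bu\|^2\), the same constant.
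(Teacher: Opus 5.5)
Your proof is correct and is essentially the paper's own argument: the paper likewise writes the central difference via the fundamental theorem of calculus (rescaling the integration variable to $[-1,1]$ rather than $[-\alpha,\alpha]$), defines $\beta$ as the remainder, and bounds it by $\frac12\int_{-1}^{1}L|s|\alpha\|\bu\|^2\,ds=\frac{L\alpha}{2}\|\bu\|^2$ using the Lipschitz-gradient reading of smoothness. One small caution about your fallback remark: the one-sided bound \eqref{eq:L} by itself does not yield the two-sided expansion you invoke (for instance $x\mapsto -M\max\{x,0\}^2$ on $\mathbb{R}$ satisfies \eqref{eq:L} for every $L\ge0$, yet gives $|\beta|=M\alpha/2$ at $x=0$, $u=1$), so the Lipschitz-gradient interpretation is genuinely required rather than merely convenient.
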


\begin{proof}
	By the fundamental theorem of calculus,
	\[
	\frac{f(\bx+\alpha\bu;\xi)-f(\bx-\alpha\bu;\xi)}{2\alpha}
	=
	\frac12\int_{-1}^{1}
	\bu^\top\nabla f(\bx+s\alpha\bu;\xi)\,ds.
	\]
	Hence
	\[
	\frac{f(\bx+\alpha\bu;\xi)-f(\bx-\alpha\bu;\xi)}{2\alpha}
	=
	\bu^\top\nabla f(\bx;\xi)+\beta,
	\]
	where
	\[
	\beta
	:=
	\frac12\int_{-1}^{1}
	\bu^\top\bigl(\nabla f(\bx+s\alpha\bu;\xi)-\nabla f(\bx;\xi)\bigr)\,ds.
	\]
	By \(L\)-smoothness,
	\[
	|\beta|
	\le
	\frac12\int_{-1}^{1}
	L|s|\alpha\|\bu\|^2\,ds
	=
	\frac{L\alpha}{2}\|\bu\|^2.
	\]
	Multiplying the scalar central difference by \(\bu\) gives
	Eq.~\eqref{eq:sg1}.
\end{proof}

\begin{lemma}
\label{lem:T_sum}
	Let \(T_0>1\), let \(T_k=k+T_0\) for \(k\ge0\), and fix integers
	\(0\le k<K\). Then
	\begin{equation}\label{eq:T_sum}
		\log\frac{T_K}{T_k}
		\leq
		\sum_{t=k}^{K-1}\frac{1}{t+T_0}
		\le
		1+\log\frac{T_K}{T_k}
		\le
		1+\log T_K	,
	\end{equation}
	and
	\begin{equation}\label{eq:T_sum_1}
		\sum_{t=k}^{K-1}\frac{1}{(t+T_0)^2}
		\le
		\frac{2}{T_k}.
	\end{equation}
\end{lemma}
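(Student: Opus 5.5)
The statement to prove is Lemma~\ref{lem:T_sum}: elementary bounds on harmonic-type sums with shift \(T_0>1\). I will compare each sum with an integral, using that \(s\mapsto 1/s\) and \(s\mapsto 1/s^2\) are decreasing on \((0,\infty)\).

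\emph{Lower bound in \eqref{eq:T_sum}.} For each \(t\), the decrease of \(1/s\) gives \(1/(t+T_0)\ge\int_{t+T_0}^{t+1+T_0}ds/s\). Summing over \(t=k,\dots,K-1\) telescopes to \(\int_{T_k}^{T_K}ds/s=\log(T_K/T_k)\).

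\emph{Upper bound in \eqref{eq:T_sum}.} Separate the first term, \(1/T_k\le 1/T_0<1\). For each \(t\ge k+1\), use \(1/(t+T_0)\le\int_{t-1+T_0}^{t+T_0}ds/s\). The remaining terms then sum to at most \(\int_{T_k}^{T_{K-1}}ds/s\le\log(T_K/T_k)\), so the whole sum is at most \(1+\log(T_K/T_k)\). Since \(T_k\ge T_0>1\), we have \(\log T_k>0\), and hence \(\log(T_K/T_k)\le\log T_K\). This gives the last inequality.

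\emph{Bound \eqref{eq:T_sum_1}.} Again separate the first term, \(1/T_k^2\le 1/T_k\), where I use \(T_k>1\). Bound the rest by \(\int_{T_k}^{\infty}ds/s^2=1/T_k\). Adding the two pieces gives \(2/T_k\).

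There is no real obstacle here. The only points needing care are the hypothesis \(T_0>1\), which is used for \(1/T_k<1\), \(1/T_k^2\le1/T_k\), and \(\log T_k>0\), and the handling of the empty-tail case \(K=k+1\). In that case the integral contributions vanish and the bounds reduce to the single-term estimates above.
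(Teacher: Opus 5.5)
Your proof is correct and takes essentially the paper's route: an integral comparison for the lower bound, and for both upper bounds you split off the first term and bound the rest by an integral, using \(T_k>1\) to get \(1/T_k^2\le 1/T_k\). You also state that \(\log T_k>0\) is what gives the final inequality \(\log(T_K/T_k)\le\log T_K\), a step the paper uses without comment.
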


\begin{proof}
	The function \(x\mapsto 1/(x+T_0)\) is positive and decreasing. Therefore
\[
\sum_{t=k}^{K-1}\frac1{t+T_0}
\ge
\int_k^K\frac{dx}{x+T_0}
=
\log\frac{T_K}{T_k}.
\]
For the upper bound,
\[
\sum_{t=k}^{K-1}\frac1{t+T_0}
\le
\frac1{T_k}
+
\int_k^K\frac{dx}{x+T_0}
\le
1+\log\frac{T_K}{T_k}
\le
1+\log T_K.
\]
Similarly,
\[
\sum_{t=k}^{K-1}\frac1{(t+T_0)^2}
\le
\frac1{T_k^2}
+
\int_k^\infty\frac{dx}{(x+T_0)^2}
=
\frac1{T_k^2}+\frac1{T_k}
\le
\frac2{T_k},
\]
because \(T_k>1\).
\end{proof}

\section{Properties of the Beta Distribution}

\begin{lemma}\label{lem:beta_projection}
	Let \(\bx\sim\mathcal N(0,\bm I_d)\), and let
	\(\ba\in\mathbb R^d\) be a fixed unit vector. Define
	\begin{equation*}
		Y:=\frac{\langle\bx,\ba\rangle^2}{\|\bx\|_2^2}.
	\end{equation*}
	For \(d\ge2\),
	\(Y\sim\mathrm{Beta}(1/2,(d-1)/2)\); for \(d=1\), \(Y=1\)
	almost surely.
\end{lemma}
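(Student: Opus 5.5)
The plan is to prove Lemma~\ref{lem:beta_projection} by exploiting the rotational invariance of the standard Gaussian, which reduces the claim to a ratio of independent chi-square variables.

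\emph{Case \(d=1\).} Here \(\ba=\pm1\), so \(\langle\bx,\ba\rangle^2=x^2=\|\bx\|_2^2\). Hence \(Y=1\) on the probability-one event \(\{x\ne0\}\).

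\emph{Reduction to the first coordinate (\(d\ge2\)).} Choose an orthogonal matrix \(Q\) whose first row is \(\ba^\top\); this is possible because \(\ba\) is a unit vector. Then \(\bx':=Q\bx\sim\mathcal N(0,\bm I_d)\) and \(\|\bx'\|=\|\bx\|\). Since \(\langle\bx,\ba\rangle=x'_1\),
\[
Y=\frac{(x'_1)^2}{(x'_1)^2+\sum_{i\ge2}(x'_i)^2}.
\]
Write \(A:=(x'_1)^2\) and \(B:=\sum_{i=2}^d(x'_i)^2\). Then \(A\) and \(B\) are independent, with \(A\sim\chi^2(1)=\mathrm{Gamma}(1/2,2)\) and \(B\sim\chi^2(d-1)=\mathrm{Gamma}((d-1)/2,2)\). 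The denominator is positive almost surely.

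\emph{Beta--gamma identity.} The key step is the standard fact that independent \(A\sim\mathrm{Gamma}(a,\theta)\) and \(B\sim\mathrm{Gamma}(b,\theta)\) with a common scale satisfy \(A/(A+B)\sim\mathrm{Beta}(a,b)\). I will verify this by the change of variables \((A,B)\mapsto(S,R)=(A+B,\,A/(A+B))\), whose Jacobian is \(s\). The joint density becomes
\[
\frac{1}{\Gamma(a)\Gamma(b)\theta^{a+b}}\,s^{a+b-1}e^{-s/\theta}\,r^{a-1}(1-r)^{b-1},
\]
which factorizes in \(s\) and \(r\). Integrating out \(s\) produces the factor \(\Gamma(a+b)\theta^{a+b}\), leaving exactly the \(\mathrm{Beta}(a,b)\) density in \(r\). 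Taking \(a=1/2\) and \(b=(d-1)/2\) gives the claim.

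None of these steps is a real obstacle. The only points needing care are the existence of \(Q\), which follows by completing \(\ba\) to an orthonormal basis, and handling the null event where the denominator vanishes.
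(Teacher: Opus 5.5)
Your proposal is correct and follows essentially the same route as the paper: both use rotational invariance to reduce to \(\ba=e_1\), then identify \(Y\) as \(A/(A+B)\) with independent \(A\sim\chi^2_1\) and \(B\sim\chi^2_{d-1}\), i.e.\ gamma variables of common scale, and invoke the beta--gamma identity. The only difference is that you verify that identity by the explicit change of variables, which the paper takes as standard.
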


\begin{proof}
	By rotational invariance of the standard Gaussian distribution, we may assume
\(\ba=e_1\). Then
\[
Y=\frac{X_1^2}{X_1^2+\cdots+X_d^2}.
\]
For \(d=1\), this ratio is one almost surely. For \(d\ge2\),
\(X_1^2\sim\chi^2_1\) and \(\sum_{i=2}^dX_i^2\sim\chi^2_{d-1}\) are
independent. Equivalently, these two variables are independent gamma random
variables with common scale parameter and shapes \(1/2\) and \((d-1)/2\).
The ratio of the first gamma variable to their sum therefore has the
\(\mathrm{Beta}(1/2,(d-1)/2)\) distribution.
\end{proof}

\begin{lemma}[Moments of the Beta distribution]
	\label{lem:beta}
	Let \(X\sim\mathrm{Beta}(\alpha,\beta)\), where
	\(\alpha,\beta>0\). Its mean and variance are
	\[
	\mathbb E[X]=\frac{\alpha}{\alpha+\beta},
	\qquad
	\mathrm{Var}(X)
	=
	\frac{\alpha\beta}{(\alpha+\beta)^2(\alpha+\beta+1)}.
	\]
	Moreover, for every integer \(m\ge1\), its \(m\)-th raw moment is
	\[
	\mathbb E[X^m]
	=
	\prod_{k=0}^{m-1}\frac{\alpha+k}{\alpha+\beta+k}.
	\]
\end{lemma}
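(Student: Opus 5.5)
The statement to prove is Lemma~\ref{lem:beta}: the mean, variance, and raw moments of \(X\sim\mathrm{Beta}(\alpha,\beta)\).

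The plan is to compute the raw moments directly from the density and then read off the mean and variance. The density is
\[
p(x)=\frac{x^{\alpha-1}(1-x)^{\beta-1}}{B(\alpha,\beta)},
\qquad 0<x<1,
\qquad B(\alpha,\beta)=\frac{\Gamma(\alpha)\Gamma(\beta)}{\Gamma(\alpha+\beta)}.
\]
For an integer \(m\ge1\), multiplying by \(x^m\) leaves the integrand in Beta form with shapes \(\alpha+m\) and \(\beta\). Hence
\[
\mathbb E[X^m]=\frac{B(\alpha+m,\beta)}{B(\alpha,\beta)}=\frac{\Gamma(\alpha+m)\,\Gamma(\alpha+\beta)}{\Gamma(\alpha)\,\Gamma(\alpha+\beta+m)}.
\]

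Next I would apply \(\Gamma(z+1)=z\Gamma(z)\) repeatedly, \(m\) times, to both ratios. This gives \(\Gamma(\alpha+m)/\Gamma(\alpha)=\prod_{k=0}^{m-1}(\alpha+k)\), and similarly for \(\alpha+\beta\). Dividing the two products yields the stated product formula.

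The mean is the case \(m=1\), namely \(\alpha/(\alpha+\beta)\). For the variance, take \(m=2\) to get \(\mathbb E[X^2]=\alpha(\alpha+1)/[(\alpha+\beta)(\alpha+\beta+1)]\), and subtract the squared mean. Putting everything over the common denominator \((\alpha+\beta)^2(\alpha+\beta+1)\), the numerator is
\[
\alpha(\alpha+1)(\alpha+\beta)-\alpha^2(\alpha+\beta+1)=\alpha\beta.
\]
This gives the stated variance.

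None of these steps is a real obstacle. The only point that needs care is justifying \(\int_0^1 x^{a-1}(1-x)^{b-1}\,dx=\Gamma(a)\Gamma(b)/\Gamma(a+b)\). I would treat this as the standard Beta–Gamma identity. If a self-contained argument is wanted, it follows from the Gamma-ratio representation already used in Lemma~\ref{lem:beta_projection}: change variables in \(\Gamma(a)\Gamma(b)\) via \((s,t)\mapsto(s+t,\,s/(s+t))\).
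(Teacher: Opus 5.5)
Your proof is correct. The moment ratio $B(\alpha+m,\beta)/B(\alpha,\beta)$, the $m$-fold Gamma recursion, and the variance numerator $\alpha(\alpha+1)(\alpha+\beta)-\alpha^2(\alpha+\beta+1)=\alpha\beta$ all check out. The paper states this lemma without proof as a standard fact, so your direct Beta-integral computation is the standard derivation that fills that gap.
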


\section{Product Martingales and Gaussian Angles}

\begin{lemma}[Stitched sub-Gaussian product boundary]
	\label{lem:stitched_product_mg}
	Let \((\mathcal H_k)_{k=0}^{T}\) be a filtration. Let
	\(q_k\in(0,1]\) and \(v_k\ge0\) be \(\mathcal H_k\)-measurable, let
	\(\vartheta_k\) be \(\mathcal H_{k+1}\)-measurable, \(P_0=1\), and
	\(P_K=\prod_{t=0}^{K-1}q_t\). Let
	\[
	\widetilde M_K
	=
	\sum_{k=0}^{K-1}\frac{\vartheta_k}{P_{k+1}},
	\qquad
	S_K=P_K\widetilde M_K,
	\qquad
	A_K=P_K^2\sum_{k=0}^{K-1}\frac{v_k}{P_{k+1}^2},
	\]
	where \(\vartheta_k\) is a martingale difference satisfying, conditionally on
	\(\mathcal H_k\),
	\[
	\EE[\vartheta_k\mid\mathcal H_k]=0,
	\qquad
	\EE[\exp(\lambda\vartheta_k)\mid\mathcal H_k]
	\le
	\exp(\lambda^2\sigma^2v_k),
	\qquad \lambda\in\mathbb R.
	\]
	Assume further that on an event \(\mathcal P_T\),
	\[
	P_K^{-2}\le (T+T_0)^{C_P},\qquad K=1,\dots,T,
	\]
	for a numerical constant \(C_P\), and let \(\bar A_K>0\) be deterministic
	variance envelopes. Set
	\[
	U_\star:=(T+T_0)^{C_P}\max_{K\le T}\bar A_K
	\]
	and assume that, for some \(0<\delta<1\) and confidence factor \(G\),
	\[
	G\ge 1+\log\frac1\delta,
	\qquad
	\max_{K\le T}
	\log\left(1+\log\left(e+\frac{U_\star}{\bar A_K}\right)\right)
	\le C\,G.
	\]
	Then, for a universal numerical constant \(C_{\mathrm{st}}\),
	\[
	\Pr\left(
	\mathcal P_T
	\cap
	\left\{
	\exists K\le T:
	A_K\le\bar A_K,\
	S_K>
	C_{\mathrm{st}}\sigma\sqrt{\bar A_K G}
	\right\}
	\right)
	\le\delta.
	\]
\end{lemma}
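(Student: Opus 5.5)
The plan is to work with the unnormalized martingale \(\widetilde M_K\) and its predictable quadratic variation \(V_K:=\sum_{k<K}v_k/P_{k+1}^2\), so that \(S_K=P_K\widetilde M_K\) and \(A_K=P_K^2V_K\). One point needs care: \(P_{k+1}=P_kq_k\) is \(\mathcal H_k\)-measurable, because \(q_k\) is. Hence \(\vartheta_k/P_{k+1}\) is a conditionally sub-Gaussian increment with variance proxy \(\sigma^2v_k/P_{k+1}^2\). For every fixed \(\lambda\in\mathbb R\), the process
\[
Z_K(\lambda):=\exp\bigl(\lambda\widetilde M_K-\lambda^2\sigma^2V_K\bigr)
\]
is then a nonnegative supermartingale with \(Z_0=1\). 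Ville's inequality (Lemma~\ref{lem:ville}) gives, for every fixed \(\lambda>0\),
\[
\Pr\bigl(\exists K\le T:\lambda\widetilde M_K-\lambda^2\sigma^2V_K\ge\log(1/\delta')\bigr)\le\delta'.
\]
This is a bound that holds uniformly in time.

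The difficulty is that the natural threshold \(\sigma\sqrt{\bar A_KG}\) for \(S_K\) corresponds, after rescaling, to a threshold \(\sigma\sqrt{\bar A_KG}/P_K\) for \(\widetilde M_K\). On the event \(A_K\le\bar A_K\) we have \(V_K\le\bar A_K/P_K^2\). Both quantities depend on the random scale \(P_K^{-2}\) and on \(K\). To handle this I will stitch over a geometric grid of intrinsic-variance levels \(s_j=2^j s_{\min}\), \(j=0,\dots,J\). I take \(s_{\min}:=\min_K\bar A_K\), which is the smallest possible value of \(\bar A_K/P_K^2\) since \(P_K\le1\). I take the top level \(s_J\ge U_\star\), which is allowed because on \(\mathcal P_T\) we have \(\bar A_K/P_K^2\le U_\star\). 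The number of levels is \(J\lesssim\log(e+U_\star/\min_K\bar A_K)\).

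For each level \(j\), I apply Ville with \(\lambda_j=\sqrt{G_j/(\sigma^2s_j)}\) and \(\delta_j=\delta/(2(j+1)^2)\), where \(G_j=\log(1/\delta_j)\). Consider any \(K\) with \(\bar A_K/P_K^2\in(s_{j-1},s_j]\) and \(A_K\le\bar A_K\). Then \(V_K\le s_j\), and on the complement of the bad event
\[
\widetilde M_K<\lambda_j\sigma^2s_j+G_j/\lambda_j=2\sigma\sqrt{s_jG_j}\le 2\sqrt2\,\sigma\sqrt{G_j\bar A_K}/P_K.
\]
Multiplying by \(P_K\) gives \(S_K\le C\sigma\sqrt{\bar A_KG_j}\). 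A union bound over \(j\) costs \(\sum_j\delta_j\le\delta\).

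It remains to check that \(G_j\le 1+\log(1/\delta)+2\log(1+J)\lesssim G\). This uses the hypothesis that \(\log(1+\log(e+U_\star/\bar A_K))\le CG\), together with \(G\ge1+\log(1/\delta)\). The relevant level \(j\) for a given \(K\) is at most \(\log_2(U_\star/s_{\min})\). I expect to need a little care here, because the hypothesis is stated with \(\bar A_K\) rather than with \(\min_K\bar A_K\). If needed, I will index the grid relative to each \(\bar A_K\) through the ratio \(P_K^{-2}\in[1,(T+T_0)^{C_P}]\) instead, i.e. use levels for \(P_K^{-2}\) and per-\(K\) variance. That approach breaks the single-supermartingale structure, since \(\bar A_K\) varies with \(K\). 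The main obstacle is therefore to organize the grid so that one Ville application per level covers all \(K\) simultaneously while the \(\log\log\) cost stays within the stated hypothesis. The cleanest route is the absolute grid in \(s=\bar A_K/P_K^{-2}\cdots\), i.e. in \(\bar A_KP_K^{-2}\), bounded between \(\min\bar A_K\) and \(U_\star\). I will then bound \(\log J\) through the hypothesis evaluated at the minimizing \(K\). Absorbing all numerical factors gives \(C_{\mathrm{st}}\).
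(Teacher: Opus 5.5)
Your proposal is correct. Your closing hedge resolves itself: the hypothesis is a maximum over $K$, so it holds in particular at the index minimizing $\bar A_K$. Using $G\ge1$, this gives $\log(1+J)\le C\bigl[1+\log\bigl(1+\log(e+U_\star/\min_K\bar A_K)\bigr)\bigr]\lesssim G$, and no per-$K$ grid is needed. The paper uses the same Ville supermartingale for $\widetilde M_K$ and the same dyadic stitching with $\delta_j\propto\delta/(j+1)^2$, but it peels on a different quantity. It peels on the intrinsic variance $\widetilde V_K$ over the infinite family $(2^{-j-1}U_\star,2^{-j}U_\star]$, which yields the self-normalized boundary $\widetilde M_K\le C\sigma\sqrt{\widetilde V_K\bigl(\log(1/\delta)+\log(1+\log(e+U_\star/\widetilde V_K))\bigr)}$. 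After multiplying by $P_K$, the paper must then handle $\widetilde V_K=0$ separately. It also needs the deterministic transfer inequality $a\log(1+\log(e+U/a))\le Cb\bigl[1+\log(1+\log(e+U/b))\bigr]$ for $0<a\le b$ to replace the realized $A_K$ by $\bar A_K$. You instead peel on the envelope-scaled quantity $\bar A_KP_K^{-2}$. On $\mathcal P_T$ this lies in the bounded range $[\min_K\bar A_K,U_\star]$, and it dominates $V_K$ whenever $A_K\le\bar A_K$. You therefore get finitely many levels, no zero-variance case, and a threshold already expressed in $\bar A_K$, so the transfer step disappears. The random assignment of $K$ to a level is harmless because each Ville event is uniform in $K$. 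The paper's route buys an adaptive bound in the realized $A_K$ that holds independently of the envelopes. For the lemma as stated, your version is the more direct argument.
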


\begin{proof}
	The variable \(P_{k+1}\) is \(\mathcal H_k\)-measurable, whereas
	\(\vartheta_k\) is \(\mathcal H_{k+1}\)-measurable. Hence the transformed
	increments \(\vartheta_k/P_{k+1}\) are adapted martingale differences. The
	conditional mgf assumption gives the same sub-Gaussian bound with variance
	proxy \(v_k/P_{k+1}^2\). Thus \(\widetilde M_K\) is a sub-Gaussian
	martingale with variance proxy
	\[
	\widetilde V_K:=\sum_{k=0}^{K-1}\frac{v_k}{P_{k+1}^2}.
	\]
	
	For every \(\lambda>0\),
	\[
	L_K(\lambda)
	:=
	\exp\left(
	\lambda\widetilde M_K
	-\lambda^2\sigma^2\widetilde V_K
	\right),
	\qquad K=0,\dots,T,
	\]
	is a nonnegative supermartingale. Indeed, after conditioning on
	\(\mathcal H_k\), the one-step multiplicative factor has expectation at most
	one by the preceding conditional mgf bound. Lemma~\ref{lem:ville} therefore
	gives, for every \(r>0\),
	\[
	\Pr\left(
	\exists K\le T:
	\lambda\widetilde M_K-\lambda^2\sigma^2\widetilde V_K\ge r
	\right)
	\le e^{-r}.
	\]
	
	We next apply a peeling argument to the intrinsic variance. Fix a
	deterministic \(U>0\). For
	\(j=0,1,2,\dots\), set
	\[
	I_j:=\left(2^{-j-1}U,\,2^{-j}U\right],
	\qquad
	\delta_j:=\frac{6\delta}{\pi^2(j+1)^2},
	\qquad
	r_j:=\log\frac1{\delta_j},
	\]
	and choose
	\[
	\lambda_j:=
	\sqrt{\frac{r_j}{\sigma^2\,2^{-j}U}}.
	\]
	The intervals \(I_j\) partition \((0,U]\). On the
	complement of the Ville event for epoch \(j\), every \(K\le T\) with
	\(\widetilde V_K\in I_j\) satisfies
	\[
	\widetilde M_K
	\le
	\lambda_j\sigma^2\widetilde V_K+\frac{r_j}{\lambda_j}
	=
	\sigma\sqrt{r_j}
	\left(
	\frac{\widetilde V_K}{\sqrt{2^{-j}U}}
	+\sqrt{2^{-j}U}
	\right)
	\le
	3\sigma\sqrt{\widetilde V_K\,r_j}.
	\]
	The last inequality uses
	\(\widetilde V_K\le2^{-j}U\le2\widetilde V_K\) on \(I_j\). Moreover, on
	this epoch,
	\[
	r_j
	\le
	C\left\{
	\log\frac1\delta
	+
	\log\left(1+\log\left(e+\frac{U}{\widetilde V_K}\right)\right)
	\right\}.
	\]
	Since \(\sum_{j\ge0}\delta_j\le\delta\), a union bound over the
	variance epochs shows that, with probability at least \(1-\delta\),
	the following bound holds simultaneously for all \(K\le T\) with
	\(0<\widetilde V_K\le U\):
	\begin{equation}
		\label{eq:stitched_product_mg_raw_boundary}
		\widetilde M_K
		\le
		C\sigma\sqrt{\widetilde V_K
			\left(
			\log\frac1\delta+
			\log\left(1+\log\left(e+\frac{U}{\widetilde V_K}\right)\right)
			\right)}.
	\end{equation}
	The estimate is already uniform in the terminal time \(K\); Lemma~\ref{lem:ville}
	handled the maximum over \(K\) inside each epoch. If \(\widetilde V_K=0\),
	then all variance proxies up to time \(K\) are zero, and the conditional mgf
	bound with zero proxy forces the corresponding martingale increments to vanish
	almost surely; hence \(\widetilde M_K=0\).
	
	Apply Eq.~\eqref{eq:stitched_product_mg_raw_boundary} with \(U=U_\star\).
	For every \(K\) that can contribute to the failure event---that is, every
	\(K\) satisfying \(A_K\le\bar A_K\)---the event \(\mathcal P_T\) implies
	\[
	\widetilde V_K=P_K^{-2}A_K
	\le
	(T+T_0)^{C_P}\bar A_K,
	\]
	and hence \(\widetilde V_K\le U_\star\). Multiplying
	Eq.~\eqref{eq:stitched_product_mg_raw_boundary} by \(P_K\) therefore gives
	\[
	S_K
	\le
	C\sigma
	\sqrt{
		A_K
		\left(
		\log\frac1\delta+
		\log\left(1+\log\left(e+\frac{U_\star}{\widetilde V_K}\right)\right)
		\right)}.
	\]
	It remains only to replace the random variance \(A_K\) by the deterministic
	envelope \(\bar A_K\). If \(A_K=0\), then \(\widetilde V_K=0\) and hence
	\(S_K=0\), so such a \(K\) cannot contribute to the failure event. We may
	therefore consider \(0<A_K\le\bar A_K\). Since \(q_t\le1\), \(P_K\le1\), so
	\(\widetilde V_K=P_K^{-2}A_K\ge A_K\); hence the logarithmic factor with
	\(U_\star/\widetilde V_K\) is no larger than the same factor with
	\(U_\star/A_K\). For \(0<a\le b\) and \(U>0\), we use the deterministic
	inequality
	\begin{equation}
	\label{eq:variance_envelope_loglog_transfer}
	a\log\left(1+\log\left(e+\frac{U}{a}\right)\right)
	\le
	Cb\left[
	1+\log\left(1+\log\left(e+\frac{U}{b}\right)\right)
	\right].
	\end{equation}
	To see this, write \(s=b/a\ge1\) and \(x=U/b\). Since
	\(e+sx\le s(e+x)\), we have
	\[
		\log\left(1+\log(e+sx)\right)
		\le
		C\left[
		\log\left(1+\log(e+x)\right)
		+
		\log\left(1+\log s\right)
		\right].
	\]
	Multiplying by \(a=b/s\) and using
	\(\sup_{s\ge1}s^{-1}\log(1+\log s)<\infty\) proves the displayed
	inequality, after increasing the universal constant \(C\). Applying
	Eq.~\eqref{eq:variance_envelope_loglog_transfer} with
	\(a=A_K\), \(b=\bar A_K\), and \(U=U_\star\), and also using
	\(A_K\log(1/\delta)\le\bar A_K\log(1/\delta)\), yields
	\[
	S_K
	\le
	C_{\mathrm{st}}\sigma\sqrt{\bar A_K\,G}
	\]
	for every \(K\le T\) with \(A_K\le\bar A_K\), where the assumed deterministic
	bound on
	\(\max_K\log(1+\log(e+U_\star/\bar A_K))\) and
	\(G\ge1+\log(1/\delta)\) were used in the final step. Thus the
	displayed failure event is contained in the complement of the stitched event
	constructed above, whose probability is at most \(\delta\).
\end{proof}


\begin{lemma}[Uniform weighted scan bounds]
\label{lem:weighted_scan_bounds}
	Let \(T\ge1\), \(d\ge1\), \(T_0\ge1\), and define
	\[
	w_t:=\frac{1}{t+T_0},
	\qquad
	\zeta_t:=\frac{(\bu_t^\top\ba_t)^2}{\|\bu_t\|^2},
	\qquad t=0,\dots,T-1,
	\]
	where \(\ba_t\) is an \(\cF_{t-1}\)-measurable unit vector.
	Assume that \((\cF_t)_{t\ge -1}\) is a filtration and that, for each
	\(t\), \(\bu_t\) is \(\cF_t\)-measurable while, conditionally on
	\(\cF_{t-1}\), \(\bu_t\sim\mathcal N(0,I_d)\) is independent of the
	past. Let
	\[
	J_T
	:=
	1+
	\left\lceil
	\log_2
	\left(
	\frac{T(T+T_0)}{T_0}
	\right)
	\right\rceil,
	\qquad
	\Gamma_T^{\rm scan}(\delta)
	:=
	1+\log\frac1\delta+\log J_T+\log(e+T_0).
	\]
	Then there is a universal numerical constant \(C>0\) such that, with
	probability at least \(1-\delta\), the following estimates hold
	simultaneously.
	First, for every interval \(0\le a\le b\le T-1\),
	\begin{equation}
	\label{eq:weighted_lower_scan_loglog}
		\sum_{t=a}^{b}w_t\zeta_t
		\ge
		\frac{1}{2d}\sum_{t=a}^{b}w_t
		-
		\frac{C}{dT_0}\Gamma_T^{\rm scan}(\delta).
	\end{equation}
	Second, for every \(1\le K\le T\),
	\begin{equation}
	\label{eq:weighted_prefix_upper_loglog}
		d\sum_{t=0}^{K-1}w_t\zeta_t
		\le
		C\left(1+\log(T+T_0)\right)
		+
		\frac{C}{T_0}\Gamma_T^{\rm scan}(\delta).
	\end{equation}
	Consequently, for every \(1\le K\le T\) and every
	\(k=-1,0,\dots,K-2\), with
	\[
	W_{K,k}:=\sum_{t=k+1}^{K-1}\frac1{t+T_0},
	\]
	one has
	\begin{equation}
	\label{eq:weighted_low_suffix_sum_corrected}
		\sum_{t=k+1}^{K-1}\frac{\zeta_t}{t+T_0}
		\ge
		\frac{W_{K,k}}{2d}
		-
		\frac{C}{dT_0}\Gamma_T^{\rm scan}(\delta).
	\end{equation}
\end{lemma}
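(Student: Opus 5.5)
The plan is to reduce everything to a sequential concentration statement for the centered increments \(\xi_t:=w_t(\zeta_t-1/d)\), with \(\EE[\zeta_t\mid\cF_{t-1}]=1/d\), and to handle all intervals at once through dyadic blocks.

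\emph{Per-step exponential moments.} Conditionally on \(\cF_{t-1}\), Lemma~\ref{lem:beta_projection} gives \(\zeta_t\sim\mathrm{Beta}(1/2,(d-1)/2)\). I would verify two moment bounds, using Lemma~\ref{lem:beta} or the representation \(\zeta_t=X_1^2/\|X\|^2\) with \(\|X\|^2\gtrsim d\).
\begin{itemize}
\item[(a)] For the lower tail, \(\EE[e^{-\lambda\zeta_t}]\le\exp(-\lambda/d+C\lambda^2/d^2)\) for all \(0\le\lambda\le c\,d\). This follows from \(e^{-x}\le1-x+x^2/2\) and \(\EE\zeta_t^2=3/(d(d+2))\).
\item[(b)] For the upper tail, \(\EE[e^{\lambda\zeta_t}]\le\exp(C\lambda/d)\) for \(0\le\lambda\le c\,d\). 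Here I use that \(d\zeta_t\) is stochastically sub-exponential with bounded mean.
\end{itemize}
Since \(w_t\le1/T_0\), taking \(\lambda=\theta w_t\) with \(\theta\le c\,dT_0\) keeps these bounds in range. It follows that \(\exp\bigl(\theta\sum_{t}w_t(1/d-\zeta_t)-C\theta^2\sum_t w_t^2/d^2\bigr)\) is a nonnegative supermartingale, and similarly on the upper side.

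\emph{Lower scan (eq.~\eqref{eq:weighted_lower_scan_loglog}).} This is the main obstacle, because the bound must hold for all intervals \([a,b]\) simultaneously with only a \(\log J_T\) cost rather than \(\log T^2\).

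The key step is to absorb the quadratic variance term into half the drift. Since \(w_t\le 1/T_0\), we have \(\sum_{[a,b]}w_t^2\le T_0^{-1}\sum_{[a,b]}w_t\). Choosing \(\theta=c\,dT_0\) makes \(C\theta^2\sum w_t^2/d^2\le \tfrac{\theta}{2d}\sum w_t\).

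With this choice, the process \(Z_n:=\exp\bigl(\theta\sum_{t<n}w_t(\tfrac1{2d}-\zeta_t)\bigr)\) is a supermartingale. Its failure event for an interval is equivalent to \(\sum_{a}^{b} w_t(\tfrac1{2d}-\zeta_t)>r/\theta=Cr/(dT_0)\), which is exactly the form of the stated bound.

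To pass from prefixes to intervals, I would write an interval sum as a difference of two prefix sums. The start point \(a\) is then handled as follows:
\begin{itemize}
\item Apply Ville's inequality (Lemma~\ref{lem:ville}) to the process restarted at each dyadic starting point of a grid in the cumulative weight \(\sum w_t\) (equivalently in \(\log T_t\)), giving \(J_T\) grid points.
\item Handle the leftover partial block by the trivial bound \(\sum_{\text{block}}w_t\zeta_t\ge0\) together with the fact that a block's drift \(\tfrac1{2d}\sum w_t\) is at most \(O(1/(dT_0))\) if blocks are cut at weight resolution \(1/T_0\).
\end{itemize}
This grid resolution is what produces \(J_T\) and \(\log(e+T_0)\): blocks of cumulative weight \(1/T_0\) number about \(T_0\log(T_T/T_0)\), but I would group them dyadically and union bound with \(\delta/J_T\), paying \(\log J_T+\log(e+T_0)\). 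If this grouping does not close, I would fall back on a union bound over all \(T_0\)-resolution start points, which is still logarithmic.

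\emph{Prefix upper bound and corollary.} For eq.~\eqref{eq:weighted_prefix_upper_loglog}, I would use (b) with \(\theta=c\,dT_0\) and Ville's inequality on the prefix supermartingale, which is uniform in \(K\) automatically. This gives \(d\sum_{t<K}w_t\zeta_t\le C\sum_{t<K}w_t+C\Gamma/T_0\le C(1+\log(T+T_0))+C\Gamma/T_0\) by Lemma~\ref{lem:T_sum}.

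Finally, eq.~\eqref{eq:weighted_low_suffix_sum_corrected} is the case \(a=k+1\), \(b=K-1\) of the lower scan; the empty case is trivial since its right-hand side is nonpositive. Splitting \(\delta\) in half between the two scans completes the argument, with constants absorbed into \(C\).
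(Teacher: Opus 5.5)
Your argument is correct and takes a different, more economical route than the paper. The start-point discretization as written is inconsistent, though. A grid of only $J_T$ points, dyadic in $T_t$, is too coarse. The partial block between consecutive points has weight about $\log 2$, so discarding it via $\zeta_t\ge 0$ costs $\Theta(1/d)$, not $O(1/(dT_0))$.

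What closes the argument is the grid you list as a fallback:
\begin{itemize}
\item Cut $\{0,\dots,T-1\}$ greedily into blocks of cumulative weight in $[1/T_0,2/T_0)$, which is possible because $w_t\le 1/T_0$. By Lemma~\ref{lem:T_sum} there are at most $2+T_0\log(T_T/T_0)=O((e+T_0)J_T)$ blocks, since $\log(T_T/T_0)\le\log_2(T(T+T_0)/T_0)\le J_T$.
\item The union bound over their left endpoints therefore costs $\log(1/\delta)+\log(e+T_0)+\log J_T+O(1)\le C\Gamma_T^{\rm scan}(\delta)$. This is exactly the budget, which you should check explicitly rather than calling it ``still logarithmic''.
\item For an interval $[a,b]$, drop the piece before the first grid point $g\ge a$. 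It has weight $<2/T_0$, so the drift loss is $<1/(dT_0)$, and if $g>b$ the claim is trivial.
\end{itemize}
Do not phrase the passage from prefixes to intervals as a difference of two prefix sums, which would need a two-sided bound. What you use is that $Z_{b+1}/Z_a$ is itself a nonnegative supermartingale in $b\ge a-1$ started at $1$. With $\theta=dT_0/3$ and $e^{-x}\le 1-x+x^2/2$, each conditional factor $\EE[\exp(\theta w_t(\frac1{2d}-\zeta_t))\mid\cF_{t-1}]$ is at most $\exp(T_0w_t(-\frac16+\frac{T_0w_t}{6}))\le 1$. Ville's inequality then gives the linear boundary $3r/(dT_0)$ for all endpoints $b$ at once.

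The paper instead proves a Bernstein--Ville maximal inequality with a variance proxy and a Bernstein scale. It then absorbs the resulting square-root term into half the drift via $2\sqrt{xy}\le x/2+2y$. That absorption works only when each endpoint's weighted mass is within a factor $2$ of the largest one in its group. This forces the paper to:
\begin{itemize}
\item group endpoints into the mass blocks $\mathcal B_{j,\ell}(a)$;
\item union-bound over every start point of each dyadic annulus at scale $m_j=(2^jT_0)^{-1}$;
\item stitch intervals crossing several annuli using $\sum_j m_j\le 2/T_0$ and $\sum_j(j+1)/(2^jT_0)\le C/T_0$.
\end{itemize}
Your fixed tilt $\theta\asymp dT_0$ uses the uniform bound $w_t\le 1/T_0$ to absorb the quadratic mgf term into half the drift at every step. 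One Ville application per grid start then controls all endpoints simultaneously, with no endpoint grouping or stitching. The prefix bound follows the same way from your (b), for instance via $\EE[\zeta_t^p\mid\cF_{t-1}]\le p!(2/d)^p$. The paper's annulus-wise scaling yields a finer, start-dependent error of order $\Gamma/(dT_a)$. However, the lemma as stated, and its use in Lemma~\ref{lem:cE_rho}, needs only the uniform $\Gamma/(dT_0)$ error, for which your route suffices.
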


\begin{proof}
	If \(d=1\), then \(\zeta_t=1\) almost surely. The lower bounds are
	immediate, while the prefix bound follows from
	\(\sum_{t=0}^{K-1}w_t\le 1+\log(T+T_0)\), after increasing \(C\). Hence
	we assume \(d\ge2\).

	Conditional on \(\cF_{t-1}\), the vector \(\ba_t\) is fixed and
	\(\bu_t\sim\mathcal N(0,I_d)\). Therefore
	\[
	\zeta_t\mid\cF_{t-1}
	\sim
	\mathrm{Beta}\left(\frac12,\frac{d-1}{2}\right),
	\]
	and Lemma~\ref{lem:beta} gives
	\[
	\EE[\zeta_t\mid\cF_{t-1}]=\frac1d,
	\qquad
	\mathrm{Var}(\zeta_t\mid\cF_{t-1})\le\frac{2}{d^2},
	\]
	together with the following centered moment estimate. For every integer
	\(p\ge1\),
	\[
	\EE[\zeta_t^p\mid\cF_{t-1}]
	=
	\prod_{i=0}^{p-1}\frac{i+1/2}{i+d/2}
	\le
	p!\left(\frac2d\right)^p.
	\]
	The last inequality uses \(i+d/2\ge d/2\) and
	\(\prod_{i=0}^{p-1}(2i+1)\le 2^p p!\).
	Thus, for every integer \(p\ge2\),
	\[
	\EE\left[
	\left|\zeta_t-\frac1d\right|^p
	\middle|\cF_{t-1}
	\right]
	\le
	p!\left(\frac4d\right)^p.
	\]
	Set
	\[
	Z_t:=w_t\left(\frac1d-\zeta_t\right).
	\]
	Then \(Z_t\) is conditionally centered,
	\[
	\EE[Z_t^2\mid\cF_{t-1}]
	\le
	\frac{2w_t^2}{d^2},
	\qquad
	Z_t\le\frac{w_t}{d}.
	\]

	We first record the block maximal estimate used below. Fix a deterministic
	interval \([r,s]\) on which \(w_t\le m_0\), choose a sign
	\(\mathfrak s\in\{-1,1\}\), and write
	\(A_{r,u}:=\sum_{t=r}^{u}w_t\) and
	\[
	M_{r,u}^{(\mathfrak s)}
	:=
	\sum_{t=r}^{u}
	\mathfrak s\,w_t\left(\zeta_t-\frac1d\right).
	\]
	Let
	\(\mathcal B\subseteq\{r,\dots,s\}\) be a set of endpoints with largest
	element \(u_\star\), and suppose that
	\(A_{r,u_\star}\le2A_{r,u}\) for all \(u\in\mathcal B\). Then,
	for every \(\alpha\in(0,1)\), with probability at least \(1-\alpha\),
	\begin{equation}
	\label{eq:weighted_block_maximal}
		M_{r,u}^{(\mathfrak s)}
		\le
		\frac{A_{r,u}}{2d}
		+
		\frac{Cm_0}{d}\log\frac1\alpha,
		\qquad u\in\mathcal B.
	\end{equation}
	Indeed, for every integer \(p\ge2\), the centered moment estimate above
	implies
	\[
	\EE\!\left[
	\left|
	\mathfrak s\,w_t\left(\zeta_t-\frac1d\right)
	\right|^p
	\middle|\cF_{t-1}
	\right]
	\le
	\frac{p!}{2}
	\frac{32w_t^2}{d^2}
	\left(\frac{4m_0}{d}\right)^{p-2}.
	\]
	Hence, for \(0<\lambda<d/(4m_0)\),
	\[
	\EE\!\left[
	\exp\left\{
	\lambda\mathfrak s\,w_t\left(\zeta_t-\frac1d\right)
	\right\}
	\middle|\cF_{t-1}
	\right]
	\le
	\exp\left(
	\frac{\lambda^2(32w_t^2/d^2)}
	{2(1-4\lambda m_0/d)}
	\right).
	\]
	Therefore the standard exponential transform of
	\(M_{r,u}^{(\mathfrak s)}\), stopped at \(u_\star\), is a nonnegative
	supermartingale with variance proxy
	\[
	V_\star:=\frac{32}{d^2}\sum_{t=r}^{u_\star}w_t^2
	\le
	\frac{32m_0A_{r,u_\star}}{d^2}
	\]
	and Bernstein scale \(b_\star:=4m_0/d\). Applying
	Lemma~\ref{lem:ville} and optimizing over \(\lambda\) give
	\[
	\Pr\!\left(
	\max_{r\le u\le u_\star}M_{r,u}^{(\mathfrak s)}\ge x
	\right)
	\le
	\exp\left(-\frac{x^2}{2(V_\star+b_\star x)}\right).
	\]
	Thus, with probability at least \(1-\alpha\),
	\[
	\max_{r\le u\le u_\star}M_{r,u}^{(\mathfrak s)}
	\le
	\frac{C}{d}
	\left(
	\sqrt{m_0A_{r,u_\star}\log\frac1\alpha}
	+
	m_0\log\frac1\alpha
	\right).
	\]
	For \(u\in\mathcal B\), use \(A_{r,u_\star}\le2A_{r,u}\) and
	\(2\sqrt{xy}\le x/2+2y\) to absorb the square-root term into
	\(A_{r,u}/(2d)+Cm_0d^{-1}\log(1/\alpha)\), proving
	\eqref{eq:weighted_block_maximal}.

	We next establish the lower scan. Partition time into dyadic annuli
	\[
	\mathcal I_j
	:=
	\{t:2^jT_0\le t+T_0<2^{j+1}T_0\},
	\qquad j=0,1,\dots,J_T.
	\]
	Empty annuli are ignored. On \(\mathcal I_j\), set
	\(m_j:=(2^jT_0)^{-1}\); then \(w_t\le m_j\), while
	\(w_t\ge m_j/2\), and the number of possible starting points is at most
	\(N_j\le 2^jT_0+1\).

	Fix \(j\) and \(a\in\mathcal I_j\). For intervals
	\([a,b]\subseteq\mathcal I_j\), partition the endpoints by weighted mass:
	\[
	\mathcal B_{j,\ell}(a)
	:=
	\{b\in\mathcal I_j:b\ge a,\
	2^{\ell-1}m_j\le A_{a,b}<2^\ell m_j\},
	\qquad \ell\ge0.
	\]
	These blocks cover all admissible endpoints because \(A_{a,a}=w_a\ge
	m_j/2\). Moreover, \(\sum_{t\in\mathcal I_j}w_t\le C\); hence a nonempty
	block must satisfy \(2^{\ell-1}m_j\le C\), and therefore
	\[
	\ell\le C\bigl(\log(e+T_0)+j\bigr)
	\]
	after increasing the numerical constant \(C\). Thus only
	\(O(\log(e+T_0)+j)\) endpoint blocks are nonempty. If
	\(\mathcal B_{j,\ell}(a)\) is nonempty and \(b_{j,\ell}(a)\) is its largest
	endpoint, then
	\(A_{a,b_{j,\ell}(a)}<2A_{a,b}\) for every
	\(b\in\mathcal B_{j,\ell}(a)\). Apply
	\eqref{eq:weighted_block_maximal} with sign \(\mathfrak s=-1\), so that
	\(M_{a,b}^{(-1)}=\sum_{t=a}^{b}w_t(1/d-\zeta_t)\), and assign the failure
	probability
	\[
	\delta_{j,a,\ell}
	=
	\frac{c\delta}
	{(j+1)^2(N_j+1)(\ell+1)^2J_T},
	\]
	where \(c>0\) is a universal normalizing constant. Since
	\(|\mathcal I_j|\le N_j\), the total confidence budget is bounded by
	\[
	\sum_{j=0}^{J_T}\sum_{a\in\mathcal I_j}\sum_{\ell\ge0}
	\delta_{j,a,\ell}
	\le
	\frac{c\delta}{J_T}
	\sum_{j=0}^{J_T}\frac{1}{(j+1)^2}
	\frac{|\mathcal I_j|}{N_j+1}
	\sum_{\ell\ge0}\frac{1}{(\ell+1)^2}
	\le
	Cc\delta.
	\]
	Choosing \(c\) small enough makes this at most \(\delta/2\). On the resulting event,
	all intervals contained in a single annulus satisfy
	\[
	\sum_{t=a}^{b}w_t\zeta_t
	\ge
	\frac1{2d}\sum_{t=a}^{b}w_t
	-
	\frac{C}{d}
	\left(
	m_j\Gamma_T^{\rm scan}(\delta)
	+
	\frac{j+1}{2^jT_0}
	\right).
	\]
	Here we used
	\[
	m_j\log\frac1{\delta_{j,a,\ell}}
	\le
	Cm_j\Gamma_T^{\rm scan}(\delta)
	+
	\frac{C(j+1)}{2^jT_0},
	\]
	which follows from \(N_j\le2^jT_0+1\) and from the bound
	\(\ell\le C(\log(e+T_0)+j)\) for nonempty endpoint blocks.

	For an interval \([a,b]\) spanning multiple annuli, let
	\(j_0,\dots,j_1\) be the annuli it intersects. Decompose \([a,b]\) into the
	right tail of \(\mathcal I_{j_0}\), the full annuli
	\(\mathcal I_{j_0+1},\dots,\mathcal I_{j_1-1}\), and the left tail of
	\(\mathcal I_{j_1}\), omitting empty pieces. Applying the one-annulus bound
	to each piece, the leading weighted-mass terms sum to
	\((2d)^{-1}\sum_{t=a}^{b}w_t\). The accumulated error is at most
	\[
	\frac{C}{d}
	\sum_{j=j_0}^{j_1}
	\left(
	m_j\Gamma_T^{\rm scan}(\delta)
	+
	\frac{j+1}{2^jT_0}
	\right).
	\]
	Since each annulus is counted at most once and the deterministic errors are
	summable,
	\[
	\sum_{j\ge0}m_j\le\frac2{T_0},
	\qquad
	\sum_{j\ge0}\frac{j+1}{2^jT_0}\le\frac{C}{T_0}.
	\]
	Using \(\Gamma_T^{\rm scan}(\delta)\ge1\) and increasing \(C\), this proves
	\eqref{eq:weighted_lower_scan_loglog}.

	For the prefix upper bound, apply the same exponential-supermartingale
	estimate with sign \(\mathfrak s=1\) to the centered process
	\[
	R_K:=\sum_{t=0}^{K-1}w_t\left(\zeta_t-\frac1d\right).
	\]
	Since
	\[
	\sum_{t=0}^{T-1}w_t^2\le\frac2{T_0},
	\qquad
	w_0=\frac1{T_0},
	\]
	the preceding Bernstein--Ville calculation, now on the whole interval
	\([0,T-1]\), has variance proxy at most \(C/(d^2T_0)\) and Bernstein scale
	at most \(C/(dT_0)\). With probability at least \(1-\delta/2\), and using
	\(\Gamma_T^{\rm scan}(\delta)\ge\log(2/\delta)\),
	\[
	\max_{1\le K\le T}R_K
	\le
	\frac{C}{d}
	\left(
	\sqrt{\frac{\Gamma_T^{\rm scan}(\delta)}{T_0}}
	+
	\frac{\Gamma_T^{\rm scan}(\delta)}{T_0}
	\right).
	\]
	Using \(\sqrt{x}\le1+x\), adding
	\[
	\frac1d\sum_{t=0}^{K-1}w_t
	\le
	\frac{C(1+\log(T+T_0))}{d},
	\]
	and multiplying by \(d\), yields
	\eqref{eq:weighted_prefix_upper_loglog}. The suffix display
	\eqref{eq:weighted_low_suffix_sum_corrected} follows immediately from
	\eqref{eq:weighted_lower_scan_loglog} with \(a=k+1\) and \(b=K-1\).
\end{proof}

\section{Proofs of Section~\ref{sec:zsgd}}

\subsection{Proofs for the Initial Descent and Weighted-Product Recursion}

\begin{proof}[Proof of Lemma~\ref{lem:dec_1}]
	By the smoothness of \(f\) and the update rule of the algorithm,
	\begin{align*}
		&f(\bx_{t+1})
		\\
		\stackrel{\eqref{eq:L}}{\leq}&
		f(\bx_t) - \eta_t \dotprod{\nabla f(\bx_t), \bg(\bx_t;\xi_t)} + \frac{L\eta_t^2}{2} \norm{\bg(\bx_t;\xi_t)}^2 \\
		\stackrel{\eqref{eq:sg1}}{=}&
		f(\bx_t) - \eta_t \dotprod{\nabla f(\bx_t), \bu_t\bu_t^\top \nabla f(\bx_t;\xi_t) + \beta_t \bu_t} + \frac{L\eta_t^2}{2} \norm{\bu_t\bu_t^\top \nabla f(\bx_t;\xi_t) + \beta_t \bu_t}^2\\
		\leq&
		f(\bx_t) - \eta_t \dotprod{\nabla f(\bx_t), \bu_t\bu_t^\top \nabla f(\bx_t;\xi_t)} + \frac{\eta_t \beta_t^2}{2} + \frac{\eta_t ( \bu_t^\top \nabla f(\bx_t))^2}{2}\\
		&+
		L\eta_t^2 \left( \norm{\bu_t}^2 |\bu_t^\top \nabla f(\bx_t;\xi_t)|^2 + \beta_t^2 \norm{\bu_t}^2 \right)\\
		=&
		f(\bx_t) - \eta_t ( \bu_t^\top \nabla f(\bx_t))^2
		+ \eta_t \dotprod{\nabla f(\bx_t), \bu_t\bu_t^\top \be_t}
		\\
		&
		+ \frac{\eta_t \beta_t^2}{2}
		+ \frac{\eta_t ( \bu_t^\top \nabla f(\bx_t))^2}{2}
		+
		L\eta_t^2 \left( \norm{\bu_t}^2 |\bu_t^\top \nabla f(\bx_t;\xi_t)|^2 + \beta_t^2 \norm{\bu_t}^2 \right)\\
		\leq&
		f(\bx_t) - \eta_t ( \bu_t^\top \nabla f(\bx_t))^2
		+ \eta_t \dotprod{\nabla f(\bx_t), \bu_t\bu_t^\top \be_t}
		+ \frac{\eta_t \beta_t^2}{2}
		+ \frac{\eta_t ( \bu_t^\top \nabla f(\bx_t))^2}{2}
		\\
		&
		+
		L\eta_t^2 \left( 2\norm{\bu_t}^2 \Big( |\bu_t^\top \nabla f(\bx_t)|^2 + |\bu_t^\top \be_t|^2 \Big) + \beta_t^2 \norm{\bu_t}^2 \right)
		\\
		=&
		f(\bx_t) - \frac{\eta_t ( \bu_t^\top \nabla f(\bx_t))^2}{2}
		+ 2L\eta_t^2 \norm{\bu_t}^2 ( \bu_t^\top \nabla f(\bx_t))^2
		+ \frac{\eta_t \beta_t^2}{2}
		+ L\eta_t^2 \beta_t^2 \norm{\bu_t}^2\\
		&
		+ \eta_t \dotprod{\nabla f(\bx_t), \bu_t\bu_t^\top \be_t}
		+ 2L\eta_t^2\norm{\bu_t}^2|\bu_t^\top \be_t|^2.
	\end{align*}

	Consequently,
	\begin{align*}
		f(\bx_{t+1}) - f(\bx^*)
		\leq&
		f(\bx_t) - f(\bx^*)
		- \frac{\eta_t ( \bu_t^\top \nabla f(\bx_t))^2}{2}
		+ 2L\eta_t^2 \norm{\bu_t}^2 ( \bu_t^\top \nabla f(\bx_t))^2
		+ \Delta_{\alpha, t}\\
		&
		+ \eta_t \dotprod{\nabla f(\bx_t), \bu_t\bu_t^\top \be_t}
		+ 2L\eta_t^2\norm{\bu_t}^2|\bu_t^\top \be_t|^2\\
		=&
		f(\bx_t) - f(\bx^*)
		-
		\frac{\eta_t}{2} \left(1 - 4L\eta_t \norm{\bu_t}^2\right)( \bu_t^\top \nabla f(\bx_t))^2\\
		&
		+ \eta_t \dotprod{\nabla f(\bx_t), \bu_t\bu_t^\top \be_t}
		+
		2L\eta_t^2\norm{\bu_t}^2|\bu_t^\top \be_t|^2
		+ \Delta_{\alpha, t}\\
		\leq&
		f(\bx_t) - f(\bx^*)
		-
		\frac{\eta_t}{4}\bigl(\bu_t^\top\nabla f(\bx_t)\bigr)^2 \\
		&
		+ \eta_t \dotprod{\nabla f(\bx_t), \bu_t\bu_t^\top \be_t}
		+
		2L\eta_t^2\norm{\bu_t}^2|\bu_t^\top \be_t|^2
		+ \Delta_{\alpha, t}\\
		\leq&
		\left(1 - \frac{\mu\eta_t}{2}
		\frac{(\bu_t^\top\nabla f(\bx_t))^2}{\|\nabla f(\bx_t)\|^2}
		\right)\cdot \Big( f(\bx_t) - f(\bx^*) \Big)\\
		&
		+ \eta_t \dotprod{\nabla f(\bx_t), \bu_t\bu_t^\top \be_t}
		+
		2L\eta_t^2\norm{\bu_t}^2|\bu_t^\top \be_t|^2
		+ \Delta_{\alpha, t},
	\end{align*}
	where the second inequality uses
	\(\eta_t\le1/(8L\norm{\bu_t}^2)\), and the final inequality uses the
	Polyak--{\L}ojasiewicz inequality implied by \(\mu\)-strong convexity.

\end{proof}

\medskip

\begin{proof}[Proof of Lemma~\ref{lem:cE_rho}]
	Write
	\(
		w_t:=\frac1{T_t},
	\)
	and use the angle variable \(\zeta_t\) defined after
	Proposition~\ref{prop:nonvanishing_gradients}. By
	Lemma~\ref{lem:weighted_scan_bounds}, with probability at least
	\(1-\delta\), the lower scan
	\eqref{eq:weighted_lower_scan_loglog} and the prefix upper control
	\eqref{eq:weighted_prefix_upper_loglog} hold.
	Since the present \(\Gamma_T(\delta)\) dominates the scan confidence factor,
	we may replace \(\Gamma_T^{\rm scan}(\delta)\) by \(\Gamma_T(\delta)\) in the
	two resulting bounds.

	The lower scan gives, for every interval,
	\[
		\sum_{t=a}^{b}w_t\zeta_t
		\ge
		\frac{1}{2d}\sum_{t=a}^{b}w_t
		-
		\frac{C}{dT_0}\Gamma_T(\delta).
	\]
	Setting \(a=k+1\) and \(b=K-1\), then multiplying by \(2d\) and
	exponentiating, gives, simultaneously for every \(1\le K\le T\) and every
	\(k=-1,0,\dots,K-2\),
	\[
		\exp\left(
		-2d\sum_{t=k+1}^{K-1}
		\frac{\zeta_t}{T_t}
		\right)
		\le
		\exp\left(
		-\sum_{t=k+1}^{K-1}\frac1{T_t}
		+\frac{2C}{T_0}\Gamma_T(\delta)
		\right).
	\]
	The case \(k=-1\) is exactly \(\cE_{\rho_K}\), while
	\(k=0,\dots,K-2\) gives \(\cE_{\rho_{K,k}}\). The remaining case
	\(k=K-1\) holds because the left-hand side is \(1\) and
	\(\rho_{K,K-1}\ge1\).
	The prefix upper control gives
	\[
	d\sum_{t=0}^{K-1}w_t\zeta_t
	\le
	C(1+\Lambda)+
	\frac{C}{T_0}\Gamma_T(\delta)
	\le
	C\Lambda+C\frac{\Gamma_T(\delta)}{T_0},
	\qquad K\le T,
	\]
	which is the event \(\cE_\rho^+\).
\end{proof}

\medskip

\begin{proof}[Proof of Lemma~\ref{lem:dec_2}]
	Set
	\(\eta_t = 4d/[\mu (t+T_0) \norm{\bu_t}^2]\) with
	\(T_0 = 32 dL/\mu\), so that
	\(\eta_t \leq 1/(8L\norm{\bu_t}^2)\). Combining this choice with
	Eq.~\eqref{eq:dec_1} gives
	\begin{align*}
		\Delta_{t+1}
		\leq&
		\left(1 - \frac{2d}{T_t}\zeta_t \right)\cdot \Delta_t\\
		&
		+ \frac{4d }{\mu (t+T_0) \norm{\bu_t}^2} \cdot \dotprod{\nabla f(\bx_t), \bu_t\bu_t^\top \be_t}\\
		&
		+
		\frac{32L d^2 }{\mu^2 (t+T_0)^2 \norm{\bu_t}^4}\norm{\bu_t}^2|\bu_t^\top \be_t|^2
		+ \Delta_{\alpha, t}.
	\end{align*}

	Unrolling the preceding recursion with
	\[
	\Pi_{K,k}
	:=
	\prod_{t=k+1}^{K-1}
	\left(
	1-\frac{2d}{T_t}\zeta_t
	\right),
	\]
	yields
	\begin{align*}
		&\Delta_K
		\leq
		\prod_{t=0}^{K-1}
		\left(
		1-\frac{2d}{T_t}\zeta_t
		\right)
		\cdot
		\Delta_0
		\\
		&
		+
		\sum_{k=0}^{K-1} \left( \Pi_{K,k} \cdot \frac{4d \cdot \dotprod{\nabla f(\bx_k), \bu_k\bu_k^\top \be_k}}{\mu (k+T_0) \norm{\bu_k}^2} \right)
		\\
		&
		+
		\sum_{k=0}^{K-1} \left( \Pi_{K,k}
		\cdot
		\left(\frac{32 Ld^2 }{\mu^2 (k+T_0)^2} \cdot \frac{|\bu_k^\top \be_k|^2}{\norm{\bu_k}^2} + \Delta_{\alpha, k}
		\right)
		\right)
	\end{align*}

	Furthermore, Lemma~\ref{lem:g_decom} gives
	\(|\beta| \leq (L\alpha/2)\norm{\bu}^2\), and hence
	\begin{align*}
		\Delta_{\alpha, t}
		=&
		\frac{\eta_t \beta_t^2}{2}
		+ L\eta_t^2 \beta_t^2 \norm{\bu_t}^2
		\leq
		\frac{\eta_t L^2 \alpha^2 \norm{\bu_t}^4}{8}
		+
		\frac{\eta_t^2 L^3 \alpha^2\norm{\bu_t}^6}{4}
		\\
		=&
		\frac{ d L^2\alpha^2 \norm{\bu_t}^2}{2\mu(t+T_0)}
		+
		\frac{ 4d^2 L^3\alpha^2 \norm{\bu_t}^2}{ \mu^2 (t+T_0)^2 }.
	\end{align*}

	By the standing assumption \(L\ge\mu\),
	\(T_0=32dL/\mu\ge32d\). Hence each factor in \(\Pi_{K,k}\) is
	nonnegative, and \(1-x\le e^{-x}\) gives
	\[
		\Pi_{K,k}
		\le
		\exp\left(
		-2d\sum_{t=k+1}^{K-1}
		\frac{\zeta_t}{T_t}
		\right)
		\le
		\rho_{K,k}.
	\]
	The same argument with the product over \(t=0,\dots,K-1\), together with
	\(\cE_{\rho_K}\), bounds the initial product by \(\rho_K\).
	On the events \(\cE_{\rho_K}\) and \(\cE_{\rho_{K,k}}\), we use this
	upper bound only for the nonnegative initial, quadratic, and smoothing
	terms. The linear term is signed and therefore keeps the actual product
	\(\Pi_{K,k}\):
	\begin{align*}
		\Delta_K
		\leq&
		\rho_K \cdot \Delta_0
		+ \sum_{k=0}^{K-1} \Pi_{K,k} \left( \frac{4d \cdot \dotprod{\nabla f(\bx_k), \bu_k\bu_k^\top \be_k}}{\mu (k+T_0) \norm{\bu_k}^2} \right)
		\\
		&+
		\sum_{k=0}^{K-1} \rho_{K,k} \left(\frac{32Ld^2 }{\mu^2 (k+T_0)^2} \cdot \frac{|\bu_k^\top \be_k|^2}{\norm{\bu_k}^2}
		+
		\frac{ d L^2\alpha^2 \norm{\bu_k}^2}{2\mu(k+T_0)}
		+
		\frac{ 4d^2 L^3\alpha^2 \norm{\bu_k}^2}{ \mu^2 (k+T_0)^2 }
		\right).
	\end{align*}
\end{proof}

\subsection{Proofs for the Sub-Gaussian Consequences}

\begin{proof}[Proof of Lemma~\ref{lem:subg_projection}]
	If \(z=0\), the claim is immediate. Otherwise, taking
	\(\lambda=1/(2\sigma^2)\) in the assumed norm bound and using
	\(|\langle\be,z\rangle|\le\|\be\|\|z\|\) gives
	\[
		\EE\left[
		\exp\left(
		\frac{\langle\be,z\rangle^2}{2\sigma^2\|z\|^2}
		\right)
		\middle|\mathcal F
		\right]
		\le e^{1/2}.
	\]
	Thus \(\langle\be,z\rangle\) has a conditional \(\psi_2\)-norm bounded by
	a universal multiple of \(\sigma\|z\|\). For completeness, one may pass
	from this conditional Orlicz bound to an mgf bound by conditioning on
	\(\mathcal F\) and applying the usual moment argument pointwise: if
	\(X=\langle\be,z\rangle\) and \(a=C\sigma\|z\|\), then
	\(\EE[|X|^m\mid\mathcal F]\le C a^m m^{m/2}\) for all \(m\ge2\). Expanding
	\(\EE[\exp(\theta X)\mid\mathcal F]\), using
	\(\EE[X\mid\mathcal F]=0\), and summing the resulting series gives
	\(\EE[\exp(\theta X)\mid\mathcal F]\le
	\exp(C\theta^2a^2)\). Absorbing the universal constants into
	\(C_{\mathrm{sg}}\) yields Eq.~\eqref{eq:subg_projection_mgf}. This is the
	Euclidean specialization of Lemma~2.1 in \citet{liu2023revisiting}.
\end{proof}

\medskip

\begin{proof}[Proof of Lemma~\ref{lem:subg_random_direction_square}]
	If \(\be=0\), the displayed exponential moment equals one. We may
	therefore assume \(\be\ne0\) in the first part of the proof.
	Write \(\mathbf v=\bu/\|\bu\|\). Conditional on \(\be\), the random variable
	\[
		\Theta
		:=
		\frac{(\mathbf v^\top\be)^2}{\|\be\|^2}
	\]
	belongs to \([0,1]\) and satisfies
	\(\EE[\Theta\mid\be,\mathcal F]=1/d\). For \(s\ge0\), convexity of
	\(x\mapsto e^{sx}\) on \([0,1]\) gives
	\[
		e^{s\Theta}
		\le
		1+\Theta(e^s-1).
	\]
	Therefore,
	\[
		\EE_{\bu}\left[
		\exp\left(
			\lambda\frac{(\bu^\top\be)^2}{\|\bu\|^2}
		\right)
		\middle|\be,\mathcal F
		\right]
		\le
		1+\frac{
			\exp(\lambda\|\be\|^2)-1
		}{d}.
	\]
	Taking conditional expectation with respect to \(\be\), using the
	conditional exponential-moment hypothesis of the lemma, and then
	\(1+x\le e^x\), we obtain
	\[
		\EE\left[
		\exp\left(
			\lambda\frac{(\bu^\top\be)^2}{\|\bu\|^2}
		\right)
		\middle|\mathcal F
		\right]
		\le
		\exp\left(
			\frac{e^{\lambda\sigma^2}-1}{d}
		\right).
	\]
	When \(0<\lambda\le1/(2\sigma^2)\), \(e^{\lambda\sigma^2}-1\le
	2\lambda\sigma^2\), proving Eq.~\eqref{eq:subg_random_direction_square_mgf}.

	For the deterministic weighted statement, the conclusion is immediate when
	\(w_{\max}=0\). Otherwise,
	apply Eq.~\eqref{eq:subg_random_direction_square_mgf}
	conditionally at each time with \(\lambda w_k\) in place of \(\lambda\).
	If \(0<\lambda\le 1/(2\sigma^2w_{\max})\), then
	\[
		\EE\exp\left(
			\lambda
			\sum_{k=0}^{K-1}w_k
			\frac{(\bu_k^\top\be_k)^2}{\|\bu_k\|^2}
			-
			\frac{2\lambda\sigma^2}{d}W
		\right)
		\le 1.
	\]
	Markov's inequality with
	\(\lambda=1/(2\sigma^2w_{\max})\) yields
	Eq.~\eqref{eq:subg_weighted_quadratic}.
\end{proof}

\subsection{Proofs for the Linear Martingale Bound}

\begin{proof}[Proof of Lemma~\ref{lem:future_angle_innovations}]
	We first identify the conditional law of the future angles and then use its
	deterministic form to prove conditional independence from \(\xi_k\).

	Let \(\mathcal F_t^-\) be the history immediately before sampling
	\(\bu_t\). Conditional on \(\mathcal F_t^-\), the iterate \(\bx_t\) is
	known. Hence, on the probability-one event from
	Proposition~\ref{prop:nonvanishing_gradients},
	\[
		a_t:=\frac{\nabla f(\bx_t)}{\|\nabla f(\bx_t)\|}
	\]
	is a fixed unit vector. The new direction \(\bu_t\sim\mathcal N(0,I_d)\)
	is independent of \(\mathcal F_t^-\). Therefore, by rotational invariance
	and Lemma~\ref{lem:beta_projection}, for every bounded Borel function \(h\),
	\[
		\EE[h(\zeta_t)\mid \mathcal F_t^-]
		=
		\int h\,d\nu,\qquad
		\nu=\mathrm{Beta}\left(\frac12,\frac{d-1}{2}\right).
	\]
	Crucially, the right-hand side is deterministic: it does not depend on
	\(\bx_t\), even though \(\bx_t\) may depend on earlier oracle samples such
	as \(\xi_k\).

	If \(k=K-1\), there are no future angle variables and the claim is the empty
	product law. Otherwise set
	\[
		Z:=(\zeta_{k+1},\dots,\zeta_{K-1}),\qquad n:=K-k-1.
	\]
	For bounded Borel functions \(h_{k+1},\dots,h_{K-1}\), successive
	conditioning backward from time \(K-1\) gives
	\begin{align*}
	\EE\left[
	\prod_{t=k+1}^{K-1}h_t(\zeta_t)
	\middle|\mathcal F_k^+
	\right] 
	& =
	\EE\left[
	\prod_{t=k+1}^{K-2}h_t(\zeta_t)
	\EE\left[h_{K-1}(\zeta_{K-1})\middle|\mathcal F_{K-1}^{-}\right]
	\middle|\mathcal F_k^+
	\right] \\
	& =
	\left(\int h_{K-1}\,d\nu\right)
	\EE\left[
	\prod_{t=k+1}^{K-2}h_t(\zeta_t)
	\middle|\mathcal F_k^+
	\right].
	\end{align*}
	Repeating the same step for \(t=K-2,\dots,k+1\) yields
	\[
		\EE\left[
		\prod_{t=k+1}^{K-1}h_t(\zeta_t)
		\middle|\mathcal F_k^+
		\right]
		=
		\prod_{t=k+1}^{K-1}\int h_t\,d\nu.
	\]
	By the monotone-class theorem, this product-function identity implies that
	for every bounded Borel function \(h\) on \([0,1]^n\),
	\[
		\EE[h(Z)\mid\mathcal F_k^+]
		=
		\int h\,d\nu^{\otimes n}.
	\]
	In other words,
	\[
		\mathcal L(Z\mid\mathcal F_k^+)=\nu^{\otimes n}.
	\]
	This is stronger than merely knowing the marginal laws: conditional on the
	post-update history \(\mathcal F_k^+\), the future angles are conditionally
	independent, and their joint law is a deterministic measure.

	We now prove the claimed conditional independence. Let \(h\) be any
	bounded Borel function of \(Z\), and let \(\varphi\) be any bounded
	measurable function of \(\xi_k\). Since \(\mathcal F_k^+\) contains
	\(\xi_k\), the variable \(\varphi(\xi_k)\) is
	\(\mathcal F_k^+\)-measurable. Thus the tower property and the deterministic
	conditional law above give
	\begin{align*}
	\EE\!\left[\varphi(\xi_k)h(Z)\mid\mathcal F_k^{u}\right]
	=
	\EE\!\left[
	\varphi(\xi_k)\EE[h(Z)\mid\mathcal F_k^+]
	\middle|\mathcal F_k^{u}
	\right] 
	=
	\left(\int h\,d\nu^{\otimes n}\right)
	\EE[\varphi(\xi_k)\mid\mathcal F_k^{u}].
	\end{align*}
	On the other hand, applying the same tower property without
	\(\varphi(\xi_k)\) gives
	\[
		\EE[h(Z)\mid\mathcal F_k^{u}]
		=
		\int h\,d\nu^{\otimes n}.
	\]
	Combining the last two displays,
	\[
	\EE\!\left[\varphi(\xi_k)h(Z)\mid\mathcal F_k^{u}\right]
	=
	\EE[\varphi(\xi_k)\mid\mathcal F_k^{u}]
	\EE[h(Z)\mid\mathcal F_k^{u}].
	\]
	This identity for all bounded measurable \(h\) and \(\varphi\) is precisely
	the conditional independence of \(\sigma(Z)\) and \(\xi_k\) given
	\(\mathcal F_k^{u}\).

	Finally, \(\be_k=\nabla f(\bx_k)-\nabla f(\bx_k;\xi_k)\) is a measurable
	function of \((\bx_k,\xi_k)\), while \(\bx_k\) is already
	\(\mathcal F_k^{u}\)-measurable. Hence the same conditional independence
	holds with \(\be_k\) in place of \(\xi_k\). Equivalently, for every bounded
	measurable \(\psi\),
	\[
		\EE\!\left[\psi(\be_k)\mid
		\mathcal F_k^{u}\vee\sigma(Z)\right]
		=
		\EE\!\left[\psi(\be_k)\mid\mathcal F_k^{u}\right]
		\quad\text{a.s.}
	\]
	By monotone convergence, the same identity extends to nonnegative
	measurable \(\psi\). Therefore, revealing the future-angle
	\(\sigma\)-field preserves the conditional mean-zero property, the
	conditional law of \(\be_k\), and its sub-Gaussian bounds.
\end{proof}

\medskip

\begin{proof}[Proof of Lemma~\ref{lem:product_linear_uniform}]
	Since \(T_0=32dL/\mu\ge32d\) and \(0\le\zeta_t\le1\), the factors
	\(q_t\) belong to \([15/16,1]\). Hence they satisfy the product-factor
	condition in Lemma~\ref{lem:stitched_product_mg}.
	Use the angle-enlarged pre-noise filtration
	\[
	\mathcal H_k
	:=
	\mathcal F_k^{u}\vee\sigma(\zeta_{k+1},\dots,\zeta_{T-1}),
	\qquad 0\le k\le T-1,
	\qquad
	\mathcal H_T:=\mathcal F_T^{-},
	\]
	where \(\mathcal F_k^{u}\) is the post-direction history in
	Proposition~\ref{prop:algorithm_conditional_subg}. The family
	\((\mathcal H_k)_{k=0}^{T}\) is increasing. Indeed, for \(0\le k\le T-2\),
	\[
	\mathcal F_k^{u}
	\subseteq
	\mathcal F_k^{+}
	\subseteq
	\mathcal F_{k+1}^{-}
	\subseteq
	\mathcal F_{k+1}^{u},
	\]
	and \(\zeta_{k+1}\) is \(\mathcal F_{k+1}^{u}\)-measurable, while
	\(\sigma(\zeta_{k+2},\dots,\zeta_{T-1})\) is part of both adjacent enlarged
	fields. Thus \(\mathcal H_k\subseteq\mathcal H_{k+1}\). For the last step,
	\(\mathcal H_{T-1}=\mathcal F_{T-1}^{u}\subseteq
	\mathcal F_{T-1}^{+}\subseteq\mathcal F_T^{-}=\mathcal H_T\). With respect
	to this filtration, \(q_k\), \(P_{k+1}^{-1}\), and \(v_k\) below are
	\(\mathcal H_k\)-measurable, while \(\vartheta_k\) is
	\(\mathcal H_{k+1}\)-measurable. By
	Lemma~\ref{lem:future_angle_innovations}, revealing the future scalar
	angles does not change the conditional mean-zero property or the
	conditional sub-Gaussian law of the current noise \(\be_k\).

	Set
	\[
		\vartheta_k
		:=
		\frac{1}{T_k}
		\frac{\nabla f(\bx_k)^\top\bu_k\,\bu_k^\top\be_k}{\|\bu_k\|^2},
		\qquad
		v_k
		:=
		C_{\mathrm{sg}}
		\frac{1}{T_k^2}
		\frac{(\nabla f(\bx_k)^\top\bu_k)^2}{\|\bu_k\|^2}.
	\]
	Then \(\EE[\vartheta_k\mid\mathcal H_k]=0\), and
	Lemma~\ref{lem:subg_projection} gives, for every \(\lambda\in\mathbb R\),
	\[
		\EE\!\left[
		\exp(\lambda\vartheta_k)\mid\mathcal H_k
		\right]
		\le
		\exp(\lambda^2\sigma^2v_k).
	\]
	Since \(\Pi_{K,k}=P_K/P_{k+1}\),
	\[
		\sum_{k=0}^{K-1}Y_{K,k}^{\Pi}
		=
		P_K\sum_{k=0}^{K-1}\frac{\vartheta_k}{P_{k+1}},
		\qquad
		P_K^2\sum_{k=0}^{K-1}\frac{v_k}{P_{k+1}^2}
		=
		C_{\mathrm{sg}}A_K^\Pi.
	\]
	Apply Lemma~\ref{lem:stitched_product_mg} with failure probability
	\(\delta_{\rm lin}\), confidence factor \(G\), and deterministic envelopes
	\(C_{\mathrm{sg}}\bar A_K\). The displayed dyadic-scale condition
	remains valid under this fixed rescaling of \(\bar A_K\), after a possible
	enlargement of the universal
	constant. Absorbing \(C_{\mathrm{sg}}\) and the universal
	product-bound constant into \(C_Y\) yields
	Eq.~\eqref{eq:product_linear_uniform_bound}.
\end{proof}

\medskip

\begin{proof}[Proof of Lemma~\ref{lem:E_u}]
	By Lemma~\ref{lem:u_norm}, for a fixed \(t\), with probability at least
	\(1-\delta^{(u)}/T\),
	\[
		\|\bu_t\|^2
		\ge
		d-2\sqrt{d\log(T/\delta^{(u)})}.
	\]
	The condition \(d\ge16\log(T/\delta^{(u)})\) implies the lower bound
	\(\|\bu_t\|^2\ge d/2\). A union bound over \(t=0,\dots,T-1\) proves
	\(\Pr(\cE_u)\ge1-\delta^{(u)}\).
\end{proof}

\medskip

\begin{proof}[Proof of Lemma~\ref{lem:V_up_explicit}]
	We first prove the relaxed event. Conditional on the pre-direction history,
	the vector \(\nabla f(\bx_k)/\|\nabla f(\bx_k)\|\) is a predictable unit
	vector on the probability-one event from
	Proposition~\ref{prop:nonvanishing_gradients}.
	Hence
	\(z_k=(\bu_k^\top\nabla f(\bx_k))/\|\nabla f(\bx_k)\|\) is conditionally
	standard Gaussian. Since the weights
	\(\bar w_k=\rho_{K,k}^2\bar\Delta_k/T_k^2\) are deterministic, the
	Laurent--Massart proof applies by repeated conditioning. Indeed, for every
	\(\lambda<1/(2\bar M_{K-1})\), the conditional Gaussian square mgf gives
	\[
		\EE\left[
		\exp\left(
		\lambda\sum_{k=0}^{K-1}\bar w_k z_k^2
		\right)
		\right]
		\le
		\prod_{k=0}^{K-1}(1-2\lambda\bar w_k)^{-1/2},
	\]
	which is the same mgf bound used in Lemma~\ref{lem:chi_all}.
	Hence, with probability at least
	\(1-\delta_K^{(V)}\),
	\[
	\sum_{k=0}^{K-1}\bar w_k z_k^2
	\le
	\sum_{k=0}^{K-1}\bar w_k
	+
	2\sqrt{\bar Q_{K-1}\log\frac{1}{\delta_K^{(V)}}}
	+
	2\bar M_{K-1}\log\frac{1}{\delta_K^{(V)}}.
	\]
	Choosing \(C_V\) large enough, this exact Laurent--Massart bound implies
	\(\widehat{\cE}_{V_K}\).

	Now assume that \(f\) is \(L\)-smooth and
	\(\Delta_k\le\bar\Delta_k\) for \(k<K\). Smoothness gives
	\(\|\nabla f(\bx_k)\|^2\le2L\Delta_k\). Therefore,
	\[
	\sum_{k=0}^{K-1}
	\left(
	\frac{|\rho_{K,k}|\,
	|\nabla f(\bx_k)^\top\bu_k|}{T_k}\sigma
	\right)^2
	\le
	2L\sigma^2
	\sum_{k=0}^{K-1}
	\frac{\rho_{K,k}^2}{T_k^2}\bar\Delta_k z_k^2.
	\]
	Multiplying the bound in \(\widehat{\cE}_{V_K}\) by \(2L\sigma^2\) gives
	Eq.~\eqref{eq:V_up_explicit}, after choosing \(C_V\) large enough.
\end{proof}

\subsection{Proofs for the Quadratic Noise Term}

\begin{proof}[Proof of Lemma~\ref{lem:Q_up}]
	Apply Lemma~\ref{lem:subg_random_direction_square} at time \(k\) with
	\(\mathcal F=\mathcal F_k^{-}\), the history before sampling \(\bu_k\).
	Conditional on this history, \(\be_k\) satisfies the conditional
	mean-zero and exponential-moment conditions, and \(\bu_k\) is an
	independent standard Gaussian direction. Use
	\[
		w_k=\frac{\rho_{K,k}}{T_k^2},
		\qquad
		W=\sum_{k=0}^{K-1}\frac{\rho_{K,k}}{T_k^2},
		\qquad
		w_{\max}
		=
		\max_{0\le k\le K-1}\frac{\rho_{K,k}}{T_k^2}.
	\]
	This gives the displayed event with probability at least
	\(1-\delta_K^{(Q)}\) by Eq.~\eqref{eq:subg_weighted_quadratic}.
\end{proof}

\medskip

\begin{proof}[Proof of Lemma~\ref{lem:Q_up_1}]
	The claimed inequality is precisely the defining inequality of
	\(\cE_{Q_K}\).
\end{proof}

\subsection{Proofs for the Smoothing-Bias Term}

\begin{proof}[Proof of Lemma~\ref{lem:alp_term_up}]
	Apply the Laurent--Massart inequality (Lemma~\ref{lem:chi_all}) to the
	collection of Gaussian coordinates defining \(\|\bu_k\|^2\), with
	coordinate weights
	\(w_k = \rho_{K,k}
	\left(
	\frac{c_1}{T_k} + \frac{c_2}{T_k^2}
	\right)\). The Laurent--Massart middle term is
	\[
	2\sqrt{d\log(1/\delta_K^{(\alpha)})\sum_k w_k^2}.
	\]
	Since \(d\ge1\), this is bounded by
	\[
	2d\sqrt{\log(1/\delta_K^{(\alpha)})\sum_k w_k^2}.
	\]
	Thus, with probability at least \(1-\delta_K^{(\alpha)}\),
	\begin{align*}
		&\sum_{k=0}^{K-1} \rho_{K,k}
		\left(
		\frac{c_1}{T_k} + \frac{c_2}{T_k^2}
		\right)\norm{\bu_k}^2
		\leq
		d \sum_{k=0}^{K-1} \rho_{K,k}
		\left(
		\frac{c_1}{T_k} + \frac{c_2}{T_k^2}
		\right)
		\\
		&+
		2d\sqrt{ \log\frac{1}{\delta_K^{(\alpha)}} \cdot \sum_{k=0}^{K-1} \rho_{K,k}^2
			\left(
			\frac{c_1}{T_k} + \frac{c_2}{T_k^2}
			\right)^2}
		+
		2d \log\frac{1}{\delta_K^{(\alpha)}} \max_{0 \leq k \leq K-1} \rho_{K,k}
		\left(
		\frac{c_1}{T_k} + \frac{c_2}{T_k^2}
		\right).
	\end{align*}
	This implies the stated relaxed event \(\cE_{\alpha_K}\) after choosing
	the universal constant \(C_\chi\) large enough.
\end{proof}

\subsection{Proofs for the Induction Event}

\begin{proof}[Proof of Lemma~\ref{lem:c_rho}]
	By Lemma~\ref{lem:T_sum} with \(k=0\), and by the definition of
	\(\rho_K\) in Eq.~\eqref{eq:rho_k},
	\begin{align*}
		\rho_K
		\le&
		\exp\left(\frac{2C}{T_0}\Gamma_T(\delta)\right)
		\frac{T_0}{T_K}
		\leq
		c_\rho \frac{T_0}{T_K}.
	\end{align*}

	Similarly, by Lemma~\ref{lem:T_sum} and Eq.~\eqref{eq:rho_kk},
	\begin{align*}
		\rho_{K,k}
		&\le
		\exp\left(\frac{2C}{T_0}\Gamma_T(\delta)\right)
		\frac{T_{k+1}}{T_K}.
	\end{align*}
	Since \(T_{k+1}=T_k+1\le2T_k\) for \(T_k\ge1\), we have
	\[
	\rho_{K,k}
	\le
	2\exp\!\left(
	\frac{2C}{T_0}\Gamma_T(\delta)
	\right)
	\frac{T_k}{T_K}
	\leq
	c_\rho
	\frac{T_k}{T_K}.
	\]
	For the lower comparison, monotonicity of \(t\mapsto 1/(t+T_0)\) gives
	\[
		\sum_{t=k+1}^{K-1}\frac1{T_t}
		\le
		\int_k^{K-1}\frac{dx}{x+T_0}
		\le
		\log\frac{T_K}{T_k}.
	\]
	Therefore Eq.~\eqref{eq:rho_kk} implies
	\[
		\rho_{K,k}
		\ge
		\exp\!\left(
		\frac{2C}{T_0}\Gamma_T(\delta)
		\right)
		\frac{T_k}{T_K}
		=
		\frac{c_\rho}{2}\frac{T_k}{T_K}.
	\]
\end{proof}

\medskip

\begin{proof}[Proof of Lemma~\ref{lem:time_uniform_controls}]
	We combine finitely many time-uniform estimates with finitely many terminal
	estimates. Use the explicit failure levels
\[
	\delta_{\rm scan}
	=
	\delta_{\rm prod}
	=
	\delta_u
	:=
	\frac{\delta}{6}
\]
for Lemma~\ref{lem:cE_rho}, Lemma~\ref{lem:product_linear_uniform}, and
Lemma~\ref{lem:E_u}, respectively. More explicitly, choose the universal
constant \(C\) in the scan estimates, in \(\rho_K,\rho_{K,k}\), and in
\(c_\rho\) large enough so that
\[
	C\,\Gamma_T(\delta)
	\ge
	C_0\left(
	1+\log\frac6\delta+\log J_T+\log(e+T_0)
	\right),
\]
where \(C_0\) is the corresponding universal constant in
Lemma~\ref{lem:weighted_scan_bounds}. Thus applying
Lemma~\ref{lem:cE_rho} at failure level \(\delta/6\) yields exactly the
displayed \(\rho\)-events with \(\Gamma_T(\delta)\) and the enlarged universal
constant \(C\). The terminal events are run with
\(\delta_\star=\exp\{-\Gamma_T(\delta)\}\). Because \(T_0=32dL/\mu\ge32\),
\[
	\delta_\star
	=
	\frac{\delta}
	{e\,J_T\,(e+T_0)\,\log(e+\mathfrak S)}
	\le
	\frac{\delta}{6},
\]
so the three terminal failures contribute at most a fixed fraction of
\(\delta\). A final union bound over
\[
\delta_{\rm scan}+\delta_{\rm prod}+\delta_u+3\delta_\star\le\delta
\]
gives the claimed probability.

The symbols \(\delta_K^{(\cdot)}=\delta_\star\) only set the deterministic
thresholds in the fixed-\(K\) event notation; no union bound over \(K\) is
taken. The weighted-suffix part follows from Lemma~\ref{lem:cE_rho}. The
norm event \(\cE_u\) follows from Lemma~\ref{lem:E_u} and the dimension
condition \(d\ge16\log(6T/\delta)\). This is the only direct union bound over
individual Gaussian norms. The same dimension condition is used below to show
that the prefix-scan confidence correction is uniformly bounded after division
by \(T_0\), thereby producing a polynomial envelope for the product weights.
These two uses are accounted for separately from the stitched
\(\Gamma_T(\delta)\) confidence factor.

For the actual product-linear term, define
\[
	q_t
	:=
	1-\frac{2d}{T_t}\zeta_t,
	\qquad
	P_0:=1,\qquad P_K:=\prod_{t=0}^{K-1}q_t.
\]
Since \(T_0=32dL/\mu\ge32d\), all \(q_t\in[15/16,1]\). The prefix
upper-tail control, using the scan confidence factor
\[
	\Gamma_T^{\rm scan}(\delta)
	:=
	1+\log\frac6\delta+\log J_T+\log(e+T_0),
\]
from Lemma~\ref{lem:weighted_scan_bounds}, gives
\begin{equation}
\label{eq:prefix_angle_product_envelope}
	d\sum_{t=0}^{K-1}\frac{\zeta_t}{T_t}
	\le
	C\Lambda+C\frac{\Gamma_T^{\rm scan}(\delta)}{T_0},
	\qquad K\le T.
\end{equation}
Indeed, the dimension assumption gives
\[
	\log T+\log\frac1\delta\le C d,
\]
while \(J_T\le C(1+\log(e+T+T_0))\). Since \(T_0=32dL/\mu\ge32d\), the
remaining terms satisfy
\[
	\frac{\log J_T+\log(e+T_0)}{T_0}\le C.
\]
Consequently \(\Gamma_T^{\rm scan}(\delta)/T_0\le C\), after increasing the
universal constant \(C\). Since \(\Lambda=\log(T+T_0)\ge1\),
Eq.~\eqref{eq:prefix_angle_product_envelope} gives the simpler bound
\[
	d\sum_{t=0}^{K-1}\frac{\zeta_t}{T_t}
	\le C\Lambda,
	\qquad K\le T.
\]
Moreover \(0\le 2d\zeta_t/T_t\le1/16\), and
\(-\log(1-x)\le2x\) for \(0\le x\le1/16\). Hence, for every \(K\le T\),
\[
	\log P_K^{-2}
	=
	-2\sum_{t=0}^{K-1}\log\left(1-\frac{2d\zeta_t}{T_t}\right)
	\le
	4\sum_{t=0}^{K-1}\frac{2d\zeta_t}{T_t}
	=
	8d\sum_{t=0}^{K-1}\frac{\zeta_t}{T_t}
	\le
	C\Lambda.
\]
Exponentiating and enlarging the universal constant gives
\[
	P_K^{-2}\le e^{C\Lambda}=(T+T_0)^{C_P},\qquad K\le T,
\]
for a universal constant \(C_P\). The factor \(\Lambda=\log(T+T_0)\) is used
here only to obtain this polynomial envelope for the product weights; the
corresponding confidence cost below enters through the dyadic
intrinsic-variance scale, not through a terminal-time union bound. Thus the
event \(\mathcal P_T(C_P)\) in Lemma~\ref{lem:product_linear_uniform} holds on
the weighted-scan event.

The deterministic envelopes in Eq.~\eqref{eq:Abar_product_linear} are chosen
with \(C_A\) large enough to dominate the terminal raw-projection transfer used
in the induction proof below.
Let
\[
	U_T:=(T+T_0)^{C_P}\max_{K\le T}\bar A_K.
\]
Since Eq.~\eqref{eq:Abar_product_linear} depends on \(K\) only through
\(T_K^{-2}\), and since \(T_0\le T_K\le T+T_0\), uniformly over \(K\le T\),
\[
	\frac{U_T}{\bar A_K}
	=
	(T+T_0)^{C_P}
	\max_{\ell\le T}\left(\frac{T_K}{T_\ell}\right)^2
	\le
	(T+T_0)^{C_P}\left(\frac{T+T_0}{T_0}\right)^2
	\le
	C(T+T_0)^{C_P+2}.
\]
The preceding polynomial ratio contributes only a double logarithm in
\(T+T_0\). This term is controlled by the confidence factor because
\[
	\frac{T(T+T_0)}{T_0}\ge T
	\quad\Longrightarrow\quad
	J_T\ge1+\lceil\log_2 T\rceil,
\]
so \(\log J_T\) controls \(\log\log(e+T)\) up to a universal constant, while
\(\log(e+T_0)\) absorbs the remaining small-\(T\) and offset terms. Hence the
preceding polynomial ratio gives
\[
	\max_{K\le T}
	\log\left(1+\log\left(e+\frac{U_T}{\bar A_K}\right)\right)
	\le
	C\,\Gamma_T(\delta).
\]
Because \(\Gamma_T(\delta)\ge1+\log(1/\delta_{\rm prod})\),
Lemma~\ref{lem:product_linear_uniform} with
\(\delta_{\rm lin}:=\delta_{\rm prod}\) and \(G=\Gamma_T(\delta)\) gives,
outside an additional failure probability \(\delta_{\rm prod}\), the
implication
\[
	A_K^\Pi\le\bar A_K
	\quad\Longrightarrow\quad
	\sum_{k=0}^{K-1}Y_{K,k}^{\Pi}
	\le
		C_Y\sigma\sqrt{\bar A_K\,\Gamma_T(\delta)}
\]
simultaneously for every \(K\le T\). Since
\(\delta_K^{(Y)}=\delta_\star=e^{-\Gamma_T(\delta)}\), this is exactly
\(\mathcal L_K(\bar A_K)\) for all \(K\le T\).

It remains to place the nonnegative controls required by the induction on
the same event. These controls are used only with terminal index \(T\), so no
terminal-time stitching is needed. Apply
Lemma~\ref{lem:V_up_explicit}, Lemma~\ref{lem:Q_up}, and
Lemma~\ref{lem:alp_term_up} once, with \(K=T\), deterministic envelope
\(\bar\Delta_k=\cC d\Theta_T/T_k\), and confidence parameter
\(\delta_\star\). Since
\(\log(1/\delta_\star)=\Gamma_T(\delta)\), this gives
\(\widehat{\cE}_{V_T}\), \(\cE_{Q_T}\), and \(\cE_{\alpha_T}\) with the
logarithmic thresholds recorded in Eq.~\eqref{eq:c_del}. Their total
failure probability is at most \(3\delta_\star\), which is already included in
the fixed confidence accounting at the start of the proof. Taking the
intersection of this fixed finite family of controls proves the lemma.
\end{proof}

\end{document}